\newcommand{\R}{\mathrm{I\kern-0.21emR}}
\newcommand{\N}{\mathrm{I\kern-0.21emN}}
\newcommand{\nn}{\mathbf{n}}
\newcommand{\ff}{\mathbf{f}}
\newcommand{\Jac}{\mathbf{Jac}}
\newcommand{\eps}{\varepsilon}
\renewcommand{\red}[1]{\textcolor{black}{#1}}
\renewcommand{\geq}{\geqslant}
\renewcommand{\leq}{\leqslant}
\newtheorem{theorem}{Theorem}  
\newtheorem{proposition}{Proposition}
\newtheorem{corollary}{Corollary}
\newtheorem{definition}{Definition}
\newtheorem{lemma}{Lemma}
\theoremstyle{definition}\newtheorem{remark}{Remark}
\title{\bfseries Optimal releases for population replacement strategies, application to {\itshape Wolbachia}}
\author{L. Almeida\footnote{Sorbonne Universit\'e, CNRS,  UMR 7598, Laboratoire Jacques-Louis Lions, F-75005, Paris, France ({\tt luis.almeida@sorbonne-universite.fr}).}\and Y. Privat\footnote{IRMA, Universit\'e de Strasbourg, CNRS UMR 7501, \'Equipe TONUS, 7 rue Ren\'e Descartes, 67084 Strasbourg, France ({\tt yannick.privat@unistra.fr}).} \and M. Strugarek\footnote{AgroParisTech, 16 rue Claude Bernard, F-75231 Paris Cedex 05, France.}~\footnote{Sorbonne Universit\'{e}, Universit\'{e} Paris-Diderot SPC, CNRS, INRIA, Laboratoire Jacques-Louis Lions, \`Equipe Mamba, F-75005 Paris ({\tt martin.strugarek@ljll.math.upmc.fr}).} \and N. Vauchelet\footnote{LAGA - UMR 7539 Institut Galil\'{e}e Universit\'{e} Paris 13, 99 avenue Jean-Baptiste Cl\'{e}ment, 93430 Villetaneuse - France ({\tt vauchelet@math.univ-paris13.fr}).}
}
\date{}
\begin{document}

\maketitle

\begin{abstract}
In this article, we consider a simplified model of time dynamics for a mosquito population subject to the artificial introduction of {\itshape Wolbachia}-infected mosquitoes, in order to fight arboviruses transmission.
Indeed, it has been observed that when some mosquito populations are infected by some {\itshape Wolbachia} bacteria, various reproductive alterations are induced in mosquitoes, including cytoplasmic incompatibility. Some of these {\itshape Wolbachia} bacteria greatly reduce the ability of insects to become infected with viruses such as the dengue ones, cutting down their vector competence and thus effectively stopping local dengue transmission.

The behavior of infected and uninfected mosquitoes is assumed to be driven by a compartmental system enriched with the presence of an internal control source term standing for releases of infected mosquitoes, distributed in time. We model and design an optimal releasing control strategy with the help of a least square problem. In a nutshell, one wants to minimize the number of uninfected mosquitoes at a given horizon of time, under some relevant biological constraints.
We derive properties of optimal controls, highlight a limit problem providing useful asymptotic properties of optimal controls. We numerically illustrate the relevance of our approach.
\end{abstract}

\noindent\textbf{Keywords:} biomathematics, optimal control, ordinary differential systems, compartmental models, bang-bang solutions.

\medskip

\noindent\textbf{AMS classification:} 92B05, 49K15, 93C15, 49M25

\tableofcontents

\section{Introduction}\label{sec:intro}

For many years (since \cite{W}), scientists have been studying {\itshape Wolbachia}, a bacterium living only inside insect cells. Recently, there has been increasing interest in the biology of {\itshape Wolbachia} and in its application as an agent for control of vector mosquito populations, by taking advantage of a phenomenon called {\itshape cytoplasmic incompatibility}. In key vector species such as {\itshape Aedes aegypti}, if a male mosquito infected with {\itshape Wolbachia} mates with a non-infected female, the embryos die early in development, in the first mitotic divisions (see \cite{Wer.Wolbachia}). This also happens even if the male and female are both infected with {\itshape Wolbachia} but are carrying mutually incompatible strains. Interestingly, an infected female can mate with an uninfected male producing healthy eggs just fine. Hence, using {\itshape cytoplasmic incompatibility} (CI) allows scientists to produce functionally sterile males that can be released in the field as an elimination tool against mosquitoes. This vector control method is known as incompatible insect technique (IIT).

Another promising application of this symbiotic bacteria is the control of endemic mosquito-borne diseases by means of population replacement. This control relies on the {\itshape pathogen interference} (PI) phenotype of some {\itshape Wolbachia} strains, especially with Zika, dengue and chikungunya viruses in {\itshape Aedes} mosquitoes (see \cite{Wal.wMel}). Population replacement methods have the benefit of having less immediate negative impact on the environment than insecticide-based approaches (since they are species specific) and potentially more cost effective (since they are long-lasting). Despite the broad range of arthropods carrying {\itshape Wolbachia}, no transmission event to any warm-blooded animals has been reported. The principle is to release {\itshape Wolbachia} carrying mosquitoes in endemic areas.
Once released, they breed with wild mosquitoes. Over time and if releases are large and long enough, one can expect the majority of mosquitoes to carry {\itshape Wolbachia}, thanks to CI. Due to PI, the mosquito population then has a reduced vector competence, decreasing the risk of Zika, dengue and chikungunya outbreaks. 

Both IIT and population replacement procedure have been imagined since a long time (see e.g. the work by Laven in 1967 \cite{laven} for population replacement, or the one by Curtis and Adak~\cite{curtis} in 1974 for population elimination, both on mosquitoes in genus {\itshape Culex}), but there has been a resurgence of interest lately for both techniques due to the increasing burden of arboviral diseases transmitted by mosquitoes in genus {\itshape Aedes}, and their operational implementation is a hot topic since the first report in \cite{Hof.Successful} of field success in Australian {\itshape Aedes aegypti} (see \cite{Lees2015} for IIT).
We focus here on population replacement strategies.

Motivated by the issue of controlling a population of wild {\itshape Aedes} mosquitoes by means of {\itshape Wolbachia} infected ones, we investigate here a simplified control model of population replacement strategies, where one acts on the wild population by means of time-distributed releases of infected individuals.
The evolution equations we use incorporate the competition of released individuals with the wild ones. Formally, let $n_1(t)$ denote the density of {\itshape Wolbachia}-free mosquitoes (the wild individuals) and $n_2(t)$ the density of {\itshape Wolbachia}-infected mosquitoes (the introduced ones) at time~$t$. 
We model population densitiy dynamics by the following competitive compartmental system:
\begin{equation}
\left\{
 \begin{array}{l}
\displaystyle  \frac{d n_1}{dt} (t) = f_1 (n_1(t), n_2(t)),
\\[10pt]
\displaystyle  \frac{d n_2}{dt} (t)= f_2 (n_1(t), n_2(t)) + u(t), \qquad t > 0, 
\\[10pt]
 n_1(0) = n_1^0, \quad n_2(0) = n_2^0, 
\end{array} \right.
\label{sys:general}
\end{equation}
where $u(\cdot)$ is a non-negative function standing for a {\bfseries control} (it models the release of {\itshape Wolbachia}-infected mosquitoes). The terms $f_i(n_1,n_2)$, $i=1,2$, are defined by
\begin{align}
\label{eq:n1}
f_1(n_1,n_2) =& b_1 n_1 \big( 1-s_h \frac{n_2}{n_1+n_2} \big) \big( 1-\frac{n_1+n_2}{K} \big) - d_1 n_1,   \\
\label{eq:n2}
f_2(n_1,n_2) =& b_2 n_2 \big( 1-\frac{n_1+n_2}{K} \big) -d_2 n_2.
\end{align}

The term $( 1-s_h \frac{n_2}{n_1+n_2})$ models the cytoplasmic incompatibility (CI): the parameter $s_h$ is the CI rate; one has $0\leq s_h \leq 1$ and when $s_h=1$, CI is perfect, whereas when $s_h=0$ there is no CI. The other parameters $(b_i, d_i)$ for $i \in \{1, 2\}$ are respectively intrinsic mortality and intrinsic birth rates, and $K$ denotes  the environmental carrying capacity.
A model such as \eqref{eq:n1}-\eqref{eq:n2} for mosquito population dynamics with {\itshape Wolbachia} has been introduced in \cite{Farkas2010,Fen.Solving}, and also studied \cite{HugBri.Modelling} where it was coupled with an epidemiological model. In \cite{ChaKim.Modeling}, similar dynamics have been described (including also a spatial dimension); further discussion on these various models can be found in \cite{SV2016}. We note that the addition of a control term was already proposed in \cite{Colombien} for population replacement and in \cite{SIT} for IIT (coupled with insecticide), where some associated optimization problems were described.

To make it closed, this system is complemented with nonnegative initial data $(n_1^0,n_2^0)$. We will assume to be, at time $t=0$, in the ``worst'' initial situation where there are no {\itshape Wolbachia}-infected mosquitoes in the population, in other words $n_2^0=0$.
When useful, we will use the notations
\[
 \nn = (n_1, n_2)\qquad \text{and}\qquad \ff= (f_1, f_2)
\]
to denote respectively the density mosquitoes vector and the right-hand side functions vector in~\eqref{eq:n1}-\eqref{eq:n2}.

The mathematical model \eqref{sys:general}-\eqref{eq:n1}-\eqref{eq:n2} in the absence of control (in other words when $u=0$) will be analyzed and commented in Section \ref{sec:systgalAnalysis}.
The starting point of our analysis is to notice that this system has, as steady states (in addition to the trivial one $(0,0)$)
\[
(n_1^*,0)\quad \text{and}\quad (0,n_2^*), \quad \text{with }n_i^*=K\left(1-\frac{d_i}{b_i}\right), \ i=1,2,
\]
corresponding to the invasion of the total population of mosquitoes, either by the wild one or the {\itshape Wolbachia}-infected one. In the following, we will make several assumptions guaranteeing that System \eqref{sys:general} is bistable and monotone. Our main objective is to build a strategy allowing us to reach the stable state $(0, n_2^*)$, starting from the other stable state $(n_1^*, 0)$, by determining {\itshape in an optimal way} a {\bfseries control law} $u (t)$. 
Any path leading from $(n_1^*, 0)$ to the basin of attraction of $(0, n_2^*)$ will be called a {\itshape population replacement strategy}.
Our aim is thus to steer the control system as closely as possible to the steady state $(0, n_2^*)$ at time $T > 0$. In an informal way, we investigate the following issue:
\begin{center}
{\sl How to design optimally the releases of {\itshape Wolbachia}-carrying mosquitoes (in other words, how to choose a good control function $u(\cdot)$) in order to favor the establishment of {\itshape Wolbachia} infection?}
\end{center}

Of course, to make this issue relevant, it is necessary to assume some constraints on the control function $u(\cdot)$, modeling in particular the fact that the ability of scientists to create {\itshape Wolbachia}-infected mosquitoes is limited. In the converse case, it is likely that a trivial answer would be to release the maximal possible number of mosquitoes at each time $t$. In the sequel, we will hence consider the following constraints (of pointwise and integral types) on the control function $u(\cdot)$
 \[
 0\leq  u(t) \leq M\text{ a.e. on }(0,T)\qquad\text{ and }\qquad\int_0^T u(t) \,  dt \leq C
 \]
 for some positive constants $M$ and $C$, meaning that the flux of {\itshape Wolbachia}-infected mosquitoes that can be released at each time $t$ is limited, as well as their total amount over the horizon of time $T$.
 
 In the analysis to follow, we use the essential property that System \eqref{sys:general}  is competitive, meaning that it enjoys a comparison principle (see Lemma \ref{lem.systMonot}).

From the mathematical point of view, problems considered in this article are related to optimal control theory for biological systems. Such kind of application has not been muchstudied so far. Nevertheless, we mention \cite{carrere,LLNP,trelatzhuzuazua} on optimal control problems for mono/bi-stable systems,  noting that this list is far from being exhaustive.

\bigskip

Let us describe our main results. 
When $b_1$, $b_2$ are large, we show that the proportion of {\itshape Wolbachia}-infected mosquitoes $p=n_2 / (n_1 + n_2)$ converges to the solution of a reduced problem of the form 
\begin{equation}
 \frac{dp}{dt}  = f(p) + u g(p),
 \label{eq:prop}
\end{equation}
with $g \geq 0$ and $f$ of bistable type\footnote{The wording ``{\bfseries bistable function}'' means that $f(0) = f(1) = 0$ and there exists $\theta \in (0, 1)$ such that $f(x)(x - \theta) < 0$ on $(0, 1) \setminus \{ \theta \}$ (in particular, one has necessarily $f(\theta)=0$ whenever $f$ is smooth).\label{footnote.bistable}}. Bistable frequency-based models such as \eqref{eq:prop} have been studied extensively (see in particular \cite{BarTur.Spatial,SCHRAIBER201226}) for cytoplasmic incompatibility modeling since the works of Caspari and Watson \cite{caspariwatson}. Yet, as a new feature \eqref{eq:prop} incorporates rigorously a control term. The typical control for this biological system being the releases of individuals, it was not straightforward to understand how that control would act on the proportion $p$ of infected individuals. Our approach thus provides a way to derive a relevant control system on $p$ from the standard control system~\eqref{sys:general} where the input is a density of released individuals.
We first prove that the optimization problems converge along with the equations ($\Gamma$-convergence result stated in Proposition \ref{prop:convergence}) to a limit problem, and then solve it completely (Theorem \ref{thm:reducedprob}). It appears that the solutions to the limit problem consist of a single release phase where the maximal flux capacity $M$ is used. Generically, this phase occurs either at the very beginning or at the very end of the time frame $[0, T]$, depending on whether the constraints allow for the existence of a population replacement strategy or not.

Numerical investigations illustrate this behavior and also hint that the optimal strategies for steering system \eqref{sys:general} toward infection establishment may differ significantly from those suitable for~\eqref{eq:prop}.


\bigskip

The article is organized as follows. 
Section \ref{sec.towardOCP} is devoted to modeling issues: we introduce the simplified dynamics we consider for the system of wild versus {\itshape Wolbachia}-carrying mosquitoes, as well as the optimal control problem \eqref{prob:full} used to design a release strategy.

This problem is then analyzed in Section \ref{sec.analysisPfull}. More precisely, we show in Section~\ref{sec.descript} that~\eqref{prob:full} and its solution converge to a population replacement strategy optimization problem~\eqref{prob:reduced} for the simplified model \eqref{eq:prop}, in the limit when birth rates are assumed to be large. Numerical experiments validating our approach are presented in Section \ref{sec:num}.
%


\section{Toward an optimal control problem}\label{sec.towardOCP}
\subsection{On the dynamics without control}\label{sec:systgalAnalysis}
First we describe precisely the asymptotic behavior of  System \eqref{sys:general} in the absence of control (in other words, when $u(\cdot)=0$).  An example of phase portrait illustrating this lemma is provided on Fig \ref{PPsyst}.
There and for all numerical illustrations of our results, the parameter values we choose for $b_i, d_i$ and $s_h$ reflect the effects of a {\itshape Wolbachia} infection in {\itshape Aedes} mosquitoes. In the well-documented case of the {\itshape Wolbachia} strain {\itshape wMel} in {\itshape Aedes aegypti} and according to \cite{Wal.wMel,Dut.Lab}, it is relevant to choose: slight fecundity reduction ($b_2 / b_1 \simeq 0.9$), slight life-span reduction ($d_2/d_1 \simeq 1.1$) and almost perfect CI ($s_h = 0.9$). We do not fix a time scale, hence the last biologically meaningful parameter is $b_1/d_1$, the basic reproduction number for the wild population. Freely inspiried by literature estimates (see \cite{Focks,Otero,HugBri.Modelling}) we assume that this number is large, at least equal to $3$ (and taking values in all the range $[3.7, 7400]$ in Section \ref{sec:num}). Since these values are used only for results illustration, they are not intended to represent precisely a well-identified mosquito population-{\itshape Wolbachia} strain couple.
\begin{lemma}\label{lem.systMonot}
System \eqref{sys:general} is positive and (monotone) competitive\footnote{This means that if $(n_1^{\pm},n_2^{\pm})$ are solutions of \eqref{sys:general} such that $n_1^- (0) < n_1^+ (0)$ and $n_2^- (0) > n_2^+ (0)$ then one has $n_1^-(t)<n_1^+(t)$ and $n_2^- (t) > n_2^+ (t)$ for every time $t\in [0,T]$, where $(n_1^-,n_2^-)$ (resp. $(n_1^+,n_2^+)$) denotes the solution of System \eqref{sys:general} associated to the choice of initial conditions $(n_1^0,n_2^0)=(n_1^-(0),n_2^-(0))$ (resp. $(n_1^0,n_2^0)=(n_1^+(0),n_2^+(0))$)\label{footnote.comp.pple}}.

Let us assume that
\begin{equation}
b_1 > d_1 \quad \text{ and } \quad b_2 > d_2 .
\label{cond:1}
\end{equation}
Then, System \eqref{eq:n1}-\eqref{eq:n2} with $u(\cdot) = 0$ has at least three non-negative steady states: 
\[
(0, 0), \quad (n_1^*, 0),\quad (0, n_2^*),\quad \text{with}\quad n_i^* = K \left(1 - \frac{d_i}{b_i}\right), \ i \in \{1, 2\}.
\]
In this case, each population can sustain itself in the absence of the other one. In addition, $(0, 0)$ is (locally linearly) unstable.

Moreover, there exists a fourth distinct positive steady state if and only if
\begin{equation}
 1 - s_h < \frac{d_1 b_2}{d_2 b_1} < 1.
 \label{cond:2}
\end{equation}
In this case, this coexistence equilibrium is (locally linearly) unstable, and is given by
\[
 \nn^C = K \left( \left( 1 - \frac{1}{s_h} \left(1 - \frac{d_1 b_2}{d_2 b_1}\right) \right) \left( 1 - \frac{d_2}{b_2} \right), \frac{1}{s_h} \left(1 - \frac{d_1 b_2}{d_2 b_1}\right) \left( 1 - \frac{d_2}{b_2} \right) \right).
\]
Moreover, the two other nontrivial steady states are locally asymptotically stable in this case.
\end{lemma}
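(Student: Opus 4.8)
The plan is to dispatch the four assertions in turn, the local stability picture being the only genuinely computational one. \emph{Positivity and competitiveness.} First I would check that the open positive quadrant is forward invariant: since $f_1(0,n_2)=0$ the edge $\{n_1=0\}$ is invariant, and since $f_2(n_1,0)=0$ one has $\dot n_2 = f_2(n_1,0)+u = u \geq 0$ on $\{n_2=0\}$, so no trajectory can leave through the axes. For competitiveness I would compute the two off-diagonal entries of the Jacobian of $\ff$: direct differentiation gives $\partial f_2/\partial n_1 = -b_2 n_2/K \leq 0$ everywhere, and $\partial f_1/\partial n_2 = b_1 n_1(-s_h \frac{n_1}{(n_1+n_2)^2}(1-\frac{n_1+n_2}{K}) - \frac{1}{K}(1-s_h\frac{n_2}{n_1+n_2})) \leq 0$ on the biologically relevant region $\{n_1,n_2\geq 0,\ n_1+n_2\leq K\}$ (itself invariant when $u=0$, since $\frac{d}{dt}(n_1+n_2)\leq 0$ on $\{n_1+n_2=K\}$). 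Because the additive control $u(t)$ does not depend on the state, it leaves these signs untouched, and the comparison principle of the footnote is then exactly the Kamke--M\"uller monotonicity theorem for planar competitive systems.

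\emph{Steady states and the criterion \eqref{cond:2}.} Setting $u=0$ I would solve $f_1=f_2=0$ by factoring: $f_2=0$ forces $n_2=0$ or $n_1+n_2 = K(1-d_2/b_2)$, while $f_1=0$ forces $n_1=0$ or $b_1(1-s_h\frac{n_2}{n_1+n_2})(1-\frac{n_1+n_2}{K})=d_1$. The three boundary combinations immediately give $(0,0)$, $(n_1^*,0)$, $(0,n_2^*)$, all nonnegative under \eqref{cond:1}. For an interior equilibrium, substituting the total population $S:=n_1+n_2=K(1-d_2/b_2)$ (so $1-S/K=d_2/b_2$) into the $f_1$ equation collapses it to $1-s_h\frac{n_2}{S}=\frac{d_1 b_2}{d_2 b_1}$, i.e. $\frac{n_2}{S}=\frac{1}{s_h}(1-\frac{d_1 b_2}{d_2 b_1})$. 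Requiring this proportion to lie strictly in $(0,1)$ is precisely the double inequality \eqref{cond:2}, and reading off $n_2^C=S\cdot\frac{n_2}{S}$ and $n_1^C=S-n_2^C$ reproduces the stated $\nn^C$. For the instability of $(0,0)$ I would note that $\ff$ is \emph{not} differentiable there (the term $n_1\frac{n_2}{n_1+n_2}$ has direction-dependent partials at the origin), so instead I use that each coordinate axis is invariant and carries a logistic flow $\dot n_i=(b_i-d_i)n_i - b_i n_i^2/K$ whose linearization at $0$ has the positive rate $b_i-d_i>0$; hence arbitrarily nearby trajectories escape toward $(n_1^*,0)$ and $(0,n_2^*)$.

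\emph{Stability of the boundary equilibria.} At $(n_1^*,0)$ and $(0,n_2^*)$ the point sits where $\ff$ is smooth, and one coordinate vanishes, which kills one off-diagonal Jacobian entry ($\partial f_2/\partial n_1=-b_2 n_2/K=0$ when $n_2=0$, and $\partial f_1/\partial n_2=0$ when $n_1=0$), so the Jacobian is triangular and its eigenvalues are the diagonal entries. Computing these: at $(n_1^*,0)$ they are $-(b_1-d_1)<0$ and $d_2(\frac{d_1 b_2}{d_2 b_1}-1)$, the latter negative exactly under the right inequality of \eqref{cond:2}; at $(0,n_2^*)$ they are $-b_2 n_2^*/K<0$ and $d_1((1-s_h)\frac{d_2 b_1}{d_1 b_2}-1)$, the latter negative exactly under the left inequality of \eqref{cond:2}. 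Thus \eqref{cond:2} makes both boundary equilibria locally asymptotically stable.

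\emph{The coexistence equilibrium is a saddle --- the main step.} The single computation requiring care is the Jacobian at $\nn^C$, where both coordinates are positive and every entry is nonzero. Writing $f_i=n_i(\cdot)$ and using that the bracketed growth factors vanish at equilibrium, the entries reduce to expressions in the derivatives of the CI and logistic factors. The observation that tames the algebra is that $f_2$ depends on the state only through $n_2$ and $n_1+n_2$, whence $J_{21}=J_{22}=-b_2 n_2^C/K$; this lets the determinant factor as $\det J = J_{22}(J_{11}-J_{12})$, and after cancellation of the $1/K$ terms the combination $J_{11}-J_{12}$ collapses to $n_1^C b_1 s_h (1-S/K)/S>0$. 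Since $J_{22}<0$, this yields $\det J<0$, so the eigenvalues are real of opposite sign and $\nn^C$ is a saddle, hence linearly unstable. I expect this determinant computation to be the only real obstacle; everything else is bookkeeping.
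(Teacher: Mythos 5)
Your proposal is correct, and its overall skeleton (Kamke--M\"uller sign conditions for competitiveness, factoring $f_1=f_2=0$ for the equilibria, triangular Jacobians at the boundary states) matches the paper's appendix proof. The genuine divergence is in the two instability claims. For the coexistence state $\nn^C$, the paper avoids all algebra by a soft monotone-systems argument: $\nn^C$ lies in the interior of the order interval $[(n_1^*,0),(0,n_2^*)]$ whose endpoints are stable, and there is no other equilibrium in between, so order-preservation forces instability. You instead compute the Jacobian directly, and your key observation --- that $f_2$ depends on the state only through $n_2$ and $n_1+n_2$, so $J_{21}=J_{22}=-b_2 n_2^C/K$ and $\det J=J_{22}(J_{11}-J_{12})$ with $J_{11}-J_{12}=n_1^C b_1 s_h(1-S/K)/S>0$ --- checks out and yields the stronger conclusion that $\nn^C$ is a saddle, at the price of more bookkeeping. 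For $(0,0)$, the paper computes a directional derivative and exhibits the unstable direction $(0,1)$, whereas you restrict to the invariant axes carrying logistic flows with rate $b_i-d_i>0$; both are legitimate ways around the non-differentiability of $\ff$ at the origin, which you correctly flag.

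One small imprecision worth fixing: you verify $\partial f_1/\partial n_2\leq 0$ only on $\{n_1+n_2\leq K\}$ and justify the invariance of that region only for $u=0$, yet the comparison principle of the footnote is asserted for the controlled system \eqref{sys:general}, for which $\{n_1+n_2\leq K\}$ need not be invariant. The restriction is in fact unnecessary: factoring as in the paper gives $\partial f_1/\partial n_2=-b_1(1-p)\bigl(s_h(1-p)+N(1-s_h)\bigr)\leq 0$ with $p=n_2/(n_1+n_2)$ and $N=(n_1+n_2)/K$, which is non-positive on the whole positive quadrant, so the Kamke--M\"uller condition holds globally and your argument goes through without the detour.
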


For the sake of readability, the proof of this result is postponed to Appendix \ref{sec.proofs}. Notice that conditions \eqref{cond:1} and \eqref{cond:2} on the parameters are relevant since \textit{Wolbachia}-infected {\itshape Aedes} mosquitoes typically have (even slightly) reduced fecundity and lifespan (for instance in the case of {\itshape wMel} strain, \cite{Wal.wMel}). Moreover CI is almost perfect in these species-strain combination (see \cite{Dut.Lab}), \textit{i.e.} $s_h$ is close to $1$.

\begin{figure}[!ht]
\begin{center}
\includegraphics[width=12cm]{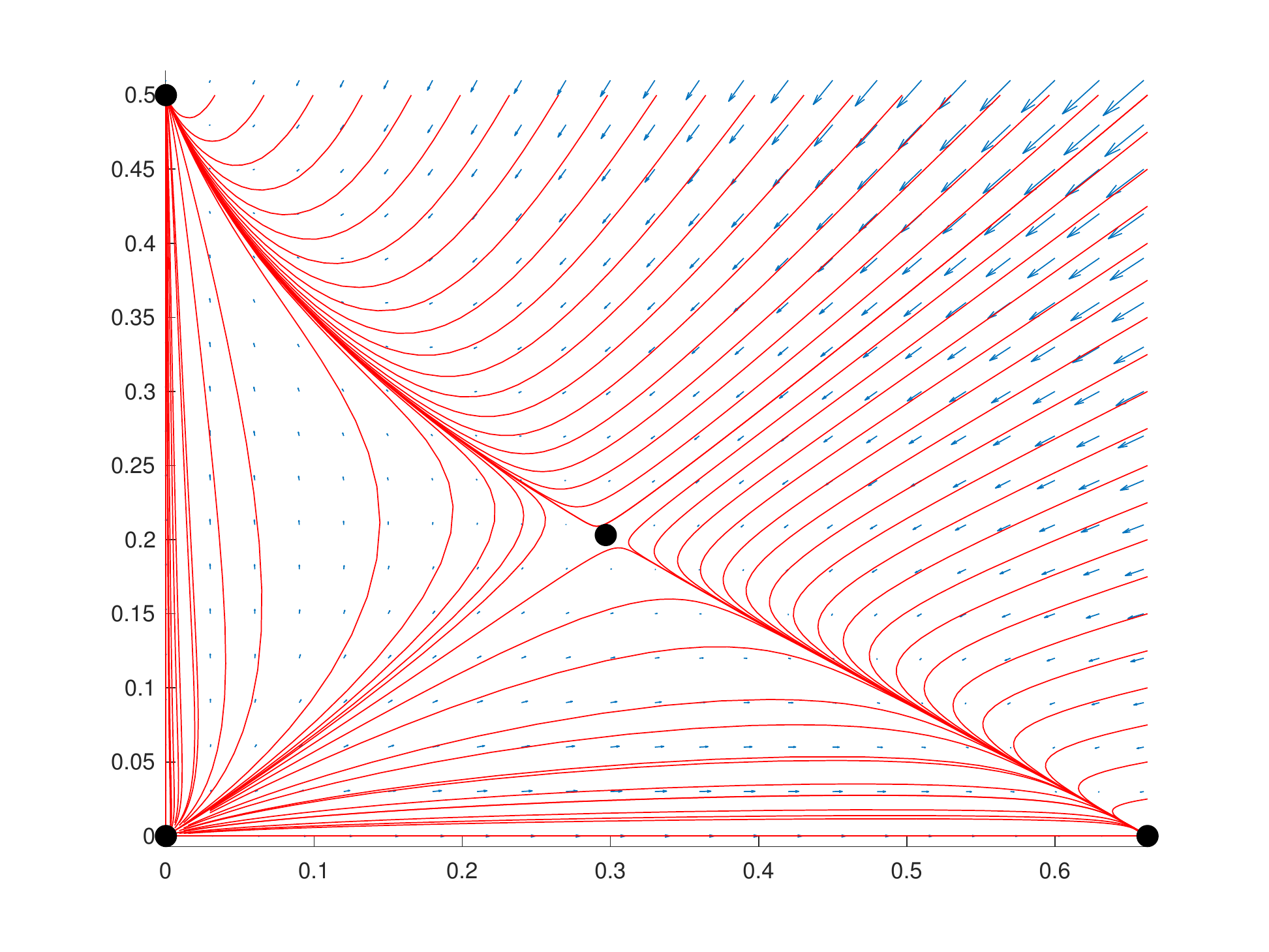}
\caption{Phase portrait of System \eqref{sys:general} for the parameters choice: $b_1=0.8$, $b_2=0.6$, $d_1=0.27$, $d_2=0.3$,  $s_h=0.8$ and $K=1$ for which conditions \eqref{cond:1} and \eqref{cond:2} are satisfied. Examples of trajectories are plotted with continuous lines. The dots locate the four steady states.\label{PPsyst}}
\end{center}
\end{figure}

\paragraph{Interpretation.} In short, under the biologically relevant conditions \eqref{cond:1} and \eqref{cond:2}, the two mutual exclusion steady states are stable while whole population extinction and coexistence state are unstable: in our model, either one of the two phenotypes must prevail in the long run, eliminating the other one.


\subsection{Objective function and constraints on the control}
Let us fix a horizon of time $T>0$. In this section, we propose a relevant choice of objective function $u\mapsto J(u)$, trying to model that we expect the control be chosen so that the final state (at time $T$) of System \eqref{sys:general} be as close as possible to the steady state $(0, n_2^*)$ corresponding to a population replacement situation. Since there is no obvious choice, we will consider a least square type functional, having the property to decrease as $\nn(T)$ gets closer to $(0, n_2^*)$.

This leads to introduce 
\begin{equation}\label{Ju}
J(u) = \frac 12 n_1(T)^2 + \frac 12 \Big[(n_2^*-n_2(T))_+\Big]^2,
\end{equation}
where, for $X\in \R$, the notation $X_+$ stands for $\max \{X,0\}$, and $\nn = (n_1, n_2)$ denotes the solution of \eqref{sys:general} associated to, in some sense, the worst initial data $\nn(0) = (n_1^*, 0)$.
Notice that, to ensure consistency of our model, any larger value of the introduced population than the equilibrium value $n_2^*$ is not detrimental for $J(u)$.
This objective function differs from the ones introduced in \cite{Colombien,Campo-Duarte2018}, where a $L^2$ norm is used to optimize a similar protocol of {\itshape Wolbachia} infection establishment by releases. Here, we are only interested in the state at the end of the treatment, which determines protocol success or failure.

\medskip

Let us enumerate the mathematical constraints we will assume on the control function $u(\cdot)$, stemming from biology.
\begin{itemize}
\item $u(t)$ corresponds to the density of {\itshape Wolbachia}-infected released mosquitoes and must be nonnegative (since we assume that we only release individuals and cannot remove them). 
\item Since System \eqref{sys:general} is monotone, it is relevant to assume an upper bound on the total number of released individuals, namely
\[
 \int_0^T u(t) dt \leq C
\]
for some given $C>0$. Indeed, releasing more and more individuals can never be detrimental. Without such a constraint, the solution of the considered optimal control problem is trivial and consists in releasing as much individuals as possible at each time.
\item For practical reasons, it is neither possible to create an infinite number of {\itshape Wolbachia}-infected individuals nor to release them ``instantly'' at time $t$. Hence, this leads to assume a pointwise upper bound on the control, by setting $u(t) \leq M$ for some $M>0$ and all $t \in [0, T]$. This constraint models that a release is necessarily distributed in time (possibly on a very short period of time) and cannot be an impulse.
\end{itemize}
All these considerations lead us to introduce the following set of admissible controls
\begin{equation}\label{def.UTCM}
\boxed{\mathcal{U}_{T,C,M} = \{u\in L^{\infty}([0,T]), \quad 0\leq u\leq M \text{ a.e. }, \int_0^T u(t)\,dt \leq C\}}.
\end{equation}

We then deal with the following optimal control problem.
\begin{equation}\label{prob:full}
\boxed{
\inf_{u\in \mathcal{U}_{T,C,M}} J(u).
}
\tag{$\mathcal{P}_{\text{full}}$}
\end{equation}
where $J$ is defined by \eqref{Ju} and $\mathcal{U}_{T,C,M}$ is defined by \eqref{def.UTCM}. 

\paragraph{Interpretation.} Problem \eqref{prob:full} amounts to finding a constrained release protocol (in terms of total number of released individuals and maximal release flux) which steers the system as close as possible to the target state: elimination of the wild phenotype and establishment of the introduced one.

\subsection{System and problem reductions}
\label{sec:reduction}
From a practical point of view, it appears relevant to consider that intrinsic birth rates are large compared with intrinsic death rates, since vector {\itshape Aedes} species typically have a very high reproductive power. For this reason, we will introduce (at the end of this section) and then analyze (in Section \ref{sec.analysisPfull}) a simplified version of Problem \eqref{prob:full} that will help to infer some interesting qualitative properties of the solution of Problem \eqref{prob:full}. This way, we will  reduce System \eqref{sys:general} into a simple scalar equation on the proportion of {\itshape Wolbachia}-infected mosquitoes in the spirit of \cite{SV2016}. To do so, let us introduce a small parameter $\eps > 0$ and the birth rates 
\begin{equation}\label{asymp.b1b2}
b_1 = b_1^0 / \eps\quad \text{ and }\quad b_2 = b_2^0 / \eps
\end{equation} 
for some positive numbers $b_1^0$, $b_2^0$.

It is notable that, in that case, the steady-states $(n_1^*,0)$ and $(0,n_2^*)$ respectively converge to $(K,0)$ and $(0,K)$ as $\eps\searrow 0$, since $n_i^* = K(1-\eps\frac{d_i}{b_i^0})$, $i=1,2$.
Notice also that \eqref{cond:1} is automatically satisfied as soon as $\eps$ is small enough.

In what follows, we will denote by $J^\eps$ the functional defined by 
\begin{equation}\label{Jueps}
J^\eps(u) = \frac 12 n_1^\eps(T)^2 + \frac 12 (n_2^*-n_2^\eps(T))_+^2,
\end{equation}
where $(n_1^{\eps}, n_2^{\eps})$ denote the solution to Problem \eqref{sys:general} with $b_1$ and $b_2$ given by \eqref{asymp.b1b2}. Let us introduce the variables 
\begin{equation}\label{def.Nepspeps}
N^{\eps} = n_1^{\eps} + n_2^{\eps}\qquad\text{ and }\qquad p^{\eps} = n_2^{\eps} / N^{\eps}.
\end{equation}
Setting $n^{\eps} = \frac{1}{\eps} \left(1 - \frac{N^{\eps}}{K}\right)$, we have the following (technical but crucial) convergence result, saying that the pair $(n^{\eps}, p^{\eps})$ converges in some sense to a well-identified limit $(u,p)$.
\begin{proposition}
Let $u^{\eps} \in \mathcal{U}_{T,C,M}$ such that $(u^{\eps})_{\eps>0}$ converges weakly-star
 to $u \in \mathcal{U}_{T,C,M}$  in $L^\infty(0,T)$ as $\eps\searrow 0$.
 
The pair $(n^{\eps}, p^{\eps})$ associated to the control $u^{\eps}$ and the parameter scaling \eqref{asymp.b1b2} solves a slow-fast system of the form
\begin{equation}\label{syst.nepspeps}
 \left\{
 \begin{array}{l}
  \eps \displaystyle \frac{d n^{\eps}}{dt} = (1 - \eps n^{\eps}) a(p^{\eps}) \big(Z (p^{\eps}) - n^{\eps} \big) - \frac{u^{\eps}}{K}, \\[10pt]
  \displaystyle \frac{d p^{\eps}}{dt} = p^{\eps} (1 - p^{\eps}) \big( n^{\eps} (b_2^0 - b_1^0 (1 - s_h p^{\eps})) + d_1 - d_2 \big)+ \frac{u^{\eps }(1-p^{\eps})}{K (1 - \eps n^{\eps})}, \quad t>0\\[10pt]
   \displaystyle n^{\eps} (0) = \frac{d_1^0 }{ b_1^0}, \  p^{\eps} (0) = 0,
 \end{array}
\right.
\end{equation}
where $a(p)$ and $Z(p)$ are defined by
\[
 a (p) = b_1^0 (1 - p) (1 - s_h p) + b_2^0 p > 0,
 \quad
 Z (p) = \frac{d_1 (1 - p) + d_2 p}{a(p)} > 0.
\]
Let us assume that \eqref{cond:2} holds and let $\eps_0 > 0$ be such that 
\begin{equation}
\frac{d_1}{ b_1^0} < \frac{1}{ \eps_0}\quad\text{ and }\quad \max_{[0, 1]} Z < \frac{1}{ \eps_0}.
\end{equation}
Then, for all $\eps \in (0, \eps_0)$, we have the uniform estimates 
\begin{equation}
0 \leq p^{\eps}(t) \leq 1\quad\text{ and }\quad n_-\leq n^{\eps}(t)\leq n_+
\end{equation}
for all $t\in [0,T]$ where
 \begin{eqnarray*}
n_-&=& \min \left\{\frac{d_1}{b_1^0}, \min_{\eps \in [0, \eps_0]} \min_{p \in [0, 1]} \frac{1+\eps Z(p) - \sqrt{(1-\eps Z(p))^2 + 4 \eps M / (K a(p))}}{2 \eps} \right\} \\
 n_+&=& \max \left\{ \frac{d_1}{b_1^0}, \max_{p \in [0, 1]} Z(p) \right\}.
\end{eqnarray*}
Moreover, up to a subfamily, $(p^{\eps})_{\eps>0}$ converges uniformly as $\eps\searrow 0$ to $ p$, solving
\begin{equation}\label{eq:p}
 \left\{
 \begin{array}{l}
\displaystyle \frac{dp}{dt} = f(p)+ug(p), \quad t>0 \\[10pt]
 p(0) = 0
 \end{array}
\right.
\end{equation}
where 
\begin{equation}\label{deffdefg}
f(p)=p(1-p)\frac{d_1b_2^0 - d_2 b_1^0 (1-s_h p)}{b_1^0 (1-p) (1-s_h p)+b_2^0p}\quad \text{and}\quad g(p)=\frac{1}{K}\cdot\frac{b_1^0 (1-p) (1-s_h p)}{b_1^0 (1-p) (1-s_h p)+b_2^0p}.
\end{equation}
\label{prop:SFconvergence}
\end{proposition}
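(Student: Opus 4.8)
The plan is to start from a direct change of variables. Writing $n_1^\eps=(1-p^\eps)N^\eps$ and $n_2^\eps=p^\eps N^\eps$ with $N^\eps=K(1-\eps n^\eps)$, I would differentiate $N^\eps$ and $p^\eps=n_2^\eps/N^\eps$ and insert the scaling \eqref{asymp.b1b2}. Summing the two equations of \eqref{sys:general} gives $\dot N^\eps = N^\eps(1-N^\eps/K)a(p^\eps)/\eps - N^\eps(d_1(1-p^\eps)+d_2 p^\eps)+u^\eps$; since $1-N^\eps/K=\eps n^\eps$ and $\dot N^\eps=-K\eps\,\dot n^\eps$, dividing by $-K$ produces exactly the first line of \eqref{syst.nepspeps}. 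For the second line I would use $\dot p^\eps=\dot n_2^\eps/N^\eps-p^\eps\dot N^\eps/N^\eps$; the $1/\eps$ factors cancel because $b_2 n_2(1-N/K)=b_2^0 p^\eps N^\eps n^\eps$, and after collecting terms one recognises the factor $b_2^0-a(p)=(1-p)(b_2^0-b_1^0(1-s_h p))$, yielding the stated equation. The initial data follow from $n_1(0)=K(1-\eps d_1/b_1^0)$ and $n_2(0)=0$.

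\textbf{Uniform a priori bounds.} I would show that the rectangle $R=[n_-,n_+]\times[0,1]$ is forward invariant for \eqref{syst.nepspeps}, uniformly in $\eps\in(0,\eps_0)$, by a Nagumo-type argument checking the sign of the vector field on $\partial R$. On $\{p=0\}$ one has $\dot p^\eps=u^\eps/(K(1-\eps n^\eps))\geq 0$ and on $\{p=1\}$ one has $\dot p^\eps=0$, so the $p$-component stays in $[0,1]$ (here $1-\eps n^\eps>0$ since $n^\eps\leq n_+<1/\eps_0$). On $\{n=n_+\}$, as $n_+\geq Z(p)$ and $1-\eps n_+>0$, the fast equation gives $\eps\dot n^\eps=(1-\eps n_+)a(p)(Z(p)-n_+)-u^\eps/K\leq 0$. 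The delicate edge is $\{n=n_-\}$: bounding $-u^\eps/K\geq -M/K$ gives $\eps\dot n^\eps\geq a(p)\,\phi_p(n)$ where $\phi_p(n)=\eps n^2-(1+\eps Z(p))n+Z(p)-M/(Ka(p))$ is an upward parabola whose smaller root is precisely the quantity defining $n_-$. Since $n_-$ is the infimum of these roots over $p\in[0,1]$ and $\eps\in[0,\eps_0]$, at $n=n_-$ one lies at or to the left of every such root, so $\phi_p(n_-)\geq0$ and $\dot n^\eps\geq 0$. As $(n^\eps(0),p^\eps(0))=(d_1/b_1^0,0)\in R$, the bounds follow.

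\textbf{Passage to the limit.} With these bounds the right-hand side of the $p$-equation is bounded uniformly in $\eps$ (all coefficients are bounded and $1-\eps n^\eps\geq 1-\eps_0 n_+>0$), so $(p^\eps)$ is equi-Lipschitz and, by the Arzel\`a--Ascoli theorem, a subsequence converges uniformly to some Lipschitz $p$. Meanwhile $(n^\eps)$ is bounded in $L^\infty$, hence converges weak-$\ast$ (up to extraction) to some $\bar n$. The key step is to identify $\bar n$: testing the fast equation against $\varphi\in C_c^\infty((0,T))$ and integrating by parts, $\eps\int \dot n^\eps\varphi=-\eps\int n^\eps\dot\varphi\to0$, the $O(\eps)$ contribution of $(1-\eps n^\eps)$ drops out, and the coefficients $a(p^\eps),Z(p^\eps)$ converge uniformly; passing to the weak-$\ast$ limit gives $a(p)Z(p)-a(p)\bar n-u/K=0$, that is $\bar n=Z(p)-u/(Ka(p))$. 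This is the main obstacle, since $n^\eps$ has no strong limit and $\eps\dot n^\eps$ does not vanish pointwise; what rescues the argument is that every genuinely weak factor ($u^\eps$, $n^\eps$, $\eps\dot n^\eps$) is multiplied only by coefficients depending on $p^\eps$, which converge strongly.

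\textbf{Conclusion.} Finally I would pass to the limit in the integrated $p$-equation. Products of the uniformly convergent $p^\eps$-coefficients with the weak-$\ast$ convergent factors $n^\eps$ and $u^\eps$ pass to the limit (strong times weak-$\ast$), giving $\dot p=p(1-p)(\bar n(b_2^0-b_1^0(1-s_hp))+d_1-d_2)+u(1-p)/K$. Substituting $\bar n=Z(p)-u/(Ka(p))$ and using the algebraic identities $Z(p)(b_2^0-b_1^0(1-s_hp))+d_1-d_2=(d_1b_2^0-d_2b_1^0(1-s_hp))/a(p)$ and $a(p)-p(b_2^0-b_1^0(1-s_hp))=b_1^0(1-s_hp)$ collapses the right-hand side to $f(p)+ug(p)$, with $f,g$ as in \eqref{deffdefg}. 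Since $a$ is bounded away from $0$ on $[0,1]$, $f$ and $g$ are Lipschitz, so the limiting Cauchy problem \eqref{eq:p} has a unique solution; uniqueness of the limit upgrades subsequential convergence to convergence of the whole family.
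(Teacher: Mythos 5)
Your proposal is correct and follows essentially the same route as the paper: derivation of the slow--fast system by the change of variables $(N^\eps,p^\eps)\mapsto(n^\eps,p^\eps)$, invariance of the rectangle $[n_-,n_+]\times[0,1]$ via the sign of the quadratic $(1-\eps n)a(p)(Z(p)-n)-M/K$, Arzel\`a--Ascoli for $p^\eps$, identification of the weak-$\ast$ limit $\bar n=Z(p)-u/(Ka(p))$ by testing the fast equation, and passage to the limit in the $p$-equation by strong-times-weak arguments followed by uniqueness for the limiting Cauchy problem. The only (welcome) difference is that you make explicit the final algebraic substitution of $\bar n$ that collapses the right-hand side to $f(p)+ug(p)$, which the paper leaves implicit.
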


\paragraph{Interpretation.} Proposition \ref{prop:SFconvergence} is a rigorous result showing that a single equation on the proportion of {\itshape Wolbachia}-carrying mosquitoes (equation \eqref{eq:p}) is a fair approximation of the time dynamics induced by the model with two populations \eqref{sys:general}, provided that the fecundity is large.

\begin{remark}
It is notable that the function $[0,\eps_0]\ni\eps \mapsto \frac{1+\eps Z(p) - \sqrt{(1-\eps Z(p))^2 + 4 \eps M / (K a(p))}}{2 \eps}$ used to define $n_-$ in the statement of Proposition \ref{prop:SFconvergence} above converges to the finite (and bounded in $p \in [0, 1]$) value $Z(p)-M/(K a(p))$ as $\eps \to 0$. Therefore $n_-$ is uniformly bounded for $\eps \in [0, \eps_0]$.
\end{remark}

\begin{figure}[!ht]
\begin{center}
\includegraphics[width=8cm]{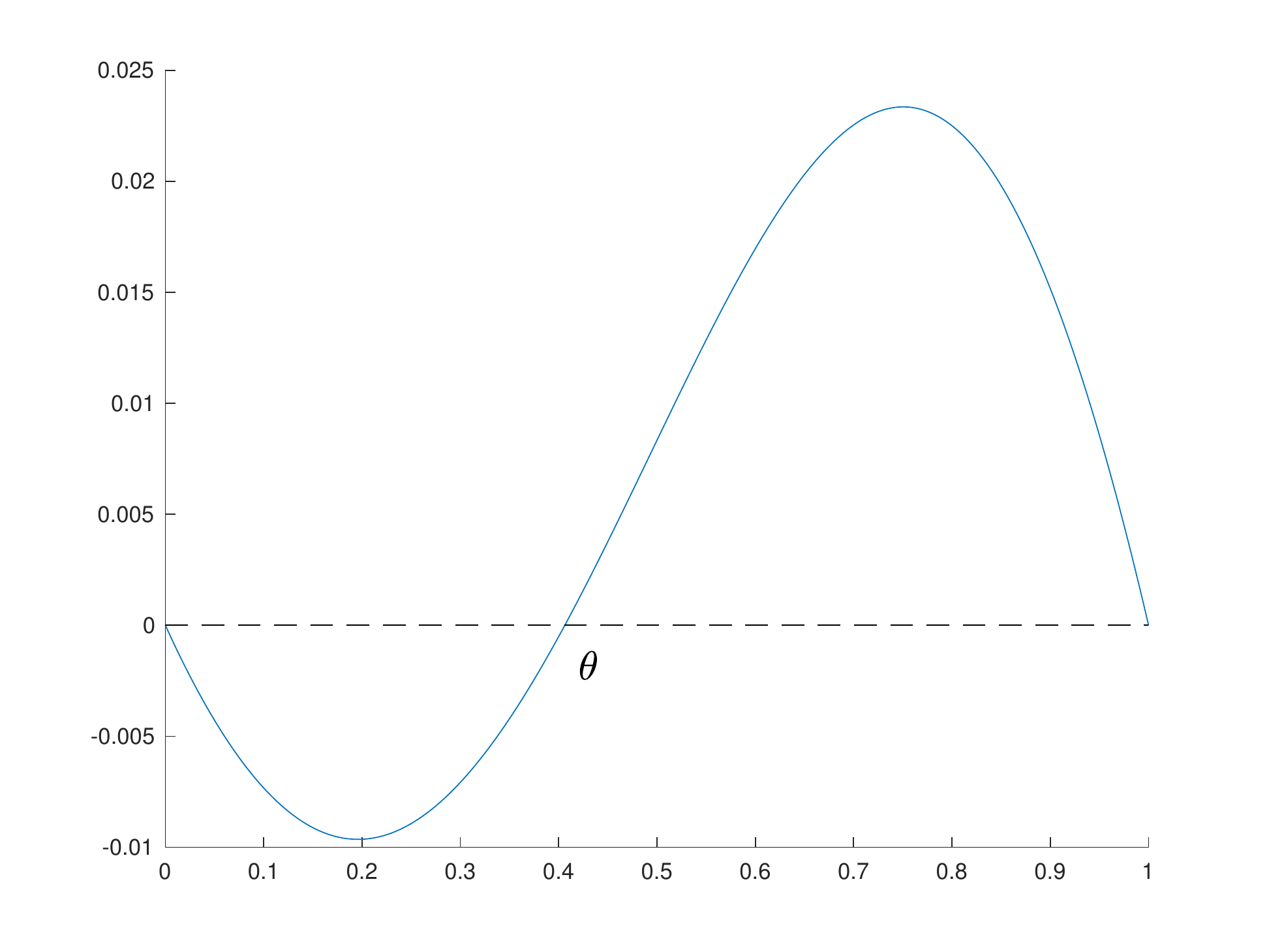}
\caption{Equation \eqref{eq:p} is the form $ \frac{dp}{dt}  = f(p) + u g(p),$ with $f$ of bistable type (see Footnote \ref{footnote.bistable}). Plot of the right-hand side function $f$ with the same parameters values as in Figure \ref{PPsyst}.\label{Fig.bistable}}
\end{center}
\end{figure}

\begin{proof}
System \eqref{sys:general} reads
\begin{equation}\label{eqn1n2}
\begin{cases}
\displaystyle\frac{d n_1^{\eps}}{dt} =& b_1^0 n_1^{\eps} (1-s_h p^{\eps}) n^{\eps} - d_1 n_1^{\eps}   
\\[10pt]
\displaystyle\frac{d n_2^{\eps}}{dt} =& b_2^0 n_2^{\eps} n^{\eps} -d_2 n_2^{\eps} + u^{\eps}.
\end{cases}
\end{equation}
Hence, the resulting system \eqref{syst.nepspeps} on $(n^{\eps},p^{\eps})$ in Proposition \ref{prop:SFconvergence} is obtained from straightforward computations.
 
Let us now provide {\itshape a priori} bounds on $(n^{\eps}(t), p^{\eps}(t))$ (uniform in $\eps \leq \eps_0$, for all $t \geq 0$). Note that $0 \leq p^{\eps} \leq 1$ is an easy consequence of the Cauchy-Lipschitz theorem since $p^{\eps} = 0$ and $p^{\eps} = 1$ are respectively sub- and super-solutions.

We infer that the right-hand side of the equation on $n^{\eps}$ in \eqref{syst.nepspeps} is bounded from below by
\[
 a(p) (1 - \eps n) (Z(p) - n) - \frac{M}{K},
\]
which is positive as soon as $n$ is smaller than the smallest root of this second order polynomial in $n$ given by
\[
 \frac{a(p) (1 + \eps Z(p)) - a(p) \sqrt{(1 - \eps Z(p))^2 + 4 \eps M / (K a(p))}}{2 a(p) \eps}.
\]
Moreover, the right-hand side of the equation on $n^{\eps}$ in \eqref{syst.nepspeps} is bounded from above by
\[
 a(p) (1 - \eps n) (Z(p) - n),
\]
which is negative as soon as $n$ is between $Z(p)$ and $1/\eps$. We then infer the expected uniform estimates on $n^{\eps}$ as soon as $\eps_0$ is small enough.

We are then driven to the slow-fast system \eqref{syst.nepspeps}.
Using the uniform bounds on $n^{\eps}$, $p^{\eps}$, $u^{\eps}$, we infer that the right-hand sides are bounded. Hence, by using the Arzel\`{a}-Ascoli theorem, we get that $(p^{\eps})_{\eps>0}$ converges up to a subfamily uniformly to some function $p$ such that $p(0) = 0$ and $0 \leq p \leq 1$ as $\eps \searrow 0$. Moreover, $dp/dt$ is uniformly bounded since $d p^{\eps}/dt$ is. 

According to Lemma \ref{lem:paris1519} below, the limit $p$ satisfies 
\[
 \frac{d p}{d t} = f(p) + u g(p), \quad p(0) = 0,
\]
with $f$ and $g$ defined by \eqref{deffdefg}. Since the solution to this equation is unique, we finally get the uniform convergence of the whole family $(p^{\eps})_{\eps>0}$ to $p$.
\end{proof}

The two following technical lemmata are used in the proof of Proposition \ref{prop:SFconvergence}.

\begin{lemma}
 Up to a subfamily, the family $(n^{\eps} )_{\eps>0}$ converges weakly to $ Z(p) - \frac{u}{a(p)K}$ as $\eps \searrow 0$ in $(W^{1,1})'$, with $p$ the uniform limit of any subfamily $(p^\eps)_{\eps>0}$.
 \label{lem:auxconv}
\end{lemma}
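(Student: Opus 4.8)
The plan is to start from the first equation of the slow--fast system \eqref{syst.nepspeps}, namely
\[
\eps \frac{dn^{\eps}}{dt} = (1 - \eps n^{\eps}) a(p^{\eps}) \big(Z(p^{\eps}) - n^{\eps}\big) - \frac{u^{\eps}}{K},
\]
and to pass to the limit $\eps \searrow 0$ in its weak formulation against test functions $\phi \in W^{1,1}(0,T)$. By the uniform bounds $n_- \leq n^{\eps} \leq n_+$ from Proposition \ref{prop:SFconvergence}, the family $(n^{\eps})$ is bounded in $L^{\infty}(0,T) = (L^1(0,T))'$; hence, along the fixed subfamily on which $p^{\eps} \to p$ uniformly, Banach--Alaoglu lets me extract a further subfamily with $n^{\eps} \rightharpoonup^* \nu$ weakly-$*$ in $L^{\infty}(0,T)$. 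The whole point is then to identify $\nu = Z(p) - u/(a(p)K)$; once this is done, uniqueness of the limit upgrades the extraction to convergence of the entire subfamily, and since weak-$*$ convergence in $L^{\infty}$ (testing against all of $L^1$) implies the weaker convergence in $(W^{1,1})'$ claimed in the statement, this finishes the proof.

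To identify $\nu$, I would expand $(1-\eps n^{\eps})a(p^{\eps})(Z(p^{\eps})-n^{\eps}) = a(p^{\eps})(Z(p^{\eps})-n^{\eps}) - \eps\, n^{\eps} a(p^{\eps})(Z(p^{\eps})-n^{\eps})$ and test the equation against $\phi$. The terms carrying an explicit factor $\eps$ are exactly the ones that must vanish. The quadratic contribution $\eps\, n^{\eps} a(p^{\eps})(Z(p^{\eps})-n^{\eps})$ is $O(\eps)$ uniformly, by the $L^{\infty}$ bounds on $n^{\eps}$ and $p^{\eps}$, so its integral against $\phi$ tends to $0$. For the fast term I write $\eps\frac{dn^{\eps}}{dt}=\frac{d}{dt}(\eps n^{\eps})$ and integrate by parts,
\[
\int_0^T \eps \frac{dn^{\eps}}{dt}\,\phi\,dt = \eps\,\big[n^{\eps}\phi\big]_0^T - \int_0^T \eps\, n^{\eps}\,\phi'\,dt ;
\]
since $\eps n^{\eps} \to 0$ uniformly on $[0,T]$ while $\phi \in C([0,T])$ and $\phi' \in L^1(0,T)$, both contributions tend to $0$. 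This is precisely the step that dictates the choice of the dual space $(W^{1,1})'$: I do not control $dn^{\eps}/dt$, which is $O(1/\eps)$ as the fast variable, and only integration by parts against a test function whose derivative lies in $L^1$ makes the fast term disappear.

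It then remains to pass to the limit in the two $O(1)$ terms. Since $p^{\eps} \to p$ uniformly and $a,Z$ are continuous with $a>0$, one has $a(p^{\eps})\to a(p)$ and $a(p^{\eps})Z(p^{\eps}) = d_1(1-p^{\eps})+d_2 p^{\eps} \to a(p)Z(p)$ uniformly on $[0,T]$. The only genuinely delicate passage is the product $a(p^{\eps})n^{\eps}$: writing $\int_0^T a(p^{\eps})n^{\eps}\phi\,dt = \int_0^T n^{\eps}\,(a(p^{\eps})\phi)\,dt$ and noting that $a(p^{\eps})\phi \to a(p)\phi$ strongly in $L^1$ while $n^{\eps} \rightharpoonup^* \nu$ weakly-$*$ in $L^{\infty}$, the weak--strong pairing gives $\int a(p^{\eps})n^{\eps}\phi \to \int a(p)\,\nu\,\phi$. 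Finally $u^{\eps} \rightharpoonup^* u$ by hypothesis, so $\int u^{\eps}\phi \to \int u\phi$. Collecting everything, the limit identity reads $\int_0^T \big[a(p)(Z(p)-\nu) - u/K\big]\phi\,dt = 0$ for every $\phi \in W^{1,1}(0,T)$, whence $a(p)(Z(p)-\nu) = u/K$ almost everywhere, that is $\nu = Z(p) - u/(a(p)K)$. The main obstacle is thus the lack of strong compactness of $n^{\eps}$ --- being the fast variable, it genuinely oscillates --- which forces me to keep the formulation linear in $n^{\eps}$ and to treat the single product $a(p^{\eps})n^{\eps}$ by weak--strong convergence; the quadratic and time-derivative terms are harmless only because each carries a factor $\eps$.
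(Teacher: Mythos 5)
Your proof is correct and follows essentially the same route as the paper: test the fast equation against $\phi\in W^{1,1}$, integrate the $\eps\,dn^{\eps}/dt$ term by parts so it vanishes, and pass to the limit using the weak-$*$ convergence of $u^{\eps}$, the uniform convergence of $p^{\eps}$ and the uniform bounds on $n^{\eps}$. You are in fact somewhat more careful than the paper's two-line argument, since you explicitly extract a weak-$*$ limit $\nu$ of $n^{\eps}$ in $L^{\infty}$, handle the product $a(p^{\eps})n^{\eps}$ by a weak--strong pairing, and dispose of the $O(\eps)$ quadratic term coming from the factor $(1-\eps n^{\eps})$, all of which the paper leaves implicit.
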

\begin{proof}
 Let $\phi \in W^{1,1}$ and multiply the differential equation satisfied by $n^{\eps}$ by $\phi$ and integrate by parts over $[0, T]$. We get 
 \[
  \eps [\phi n^{\eps}]_0^T - \eps \int_0^T \frac{d \phi}{d t} n^{\eps} = \int_0^T \phi a (p^{\eps}) (Z (p^{\eps}) - n^{\eps}) - \int_0^T \phi \frac{u^{\eps}}{K}.
 \]
 By weak-star convergence of $u^{\eps}$ in $L^\infty$, uniform convergence of $p^\eps$ in $L^\infty$ and uniform boundedness of $n^{\eps}$ we infer that
 \[
0=  \lim_{\eps \to 0} \int_0^T \phi a (p^{\eps}) (Z (p^{\eps}) - n^{\eps}) - \int_0^T \phi \frac{u}{K},
 \]
leading to the expected result.
\end{proof}

\begin{lemma}\label{lem:paris1519}
Up to a subfamily, $(p^{\eps})_{\eps>0}$ converges uniformly to $p$ solving the ordinary differential equation
 \[
  \frac{d p}{d t} = f(p)+ug(p), \quad p(0) = 0,
 \]
with $f$ and $g$ defined by \eqref{deffdefg}.
\end{lemma}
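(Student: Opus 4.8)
The plan is to pass to the limit in the weak (integrated) form of the second equation of \eqref{syst.nepspeps}. As a starting point I take the facts furnished by Proposition \ref{prop:SFconvergence} and its proof: up to a subfamily, $p^\eps\to p$ uniformly with $p(0)=0$, the family $(n^\eps)$ is uniformly bounded (so $\eps n^\eps\to 0$ uniformly), and $\frac{dp^\eps}{dt}$ is uniformly bounded. First I would rewrite the right-hand side as
\[
\frac{dp^\eps}{dt} = A^\eps n^\eps + B^\eps + C^\eps u^\eps,
\]
where $A^\eps = p^\eps(1-p^\eps)\big(b_2^0 - b_1^0(1-s_h p^\eps)\big)$, $B^\eps = p^\eps(1-p^\eps)(d_1-d_2)$ and $C^\eps = (1-p^\eps)/\big(K(1-\eps n^\eps)\big)$. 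By the uniform convergence of $p^\eps$ and the boundedness of $n^\eps$, these coefficients converge uniformly: $A^\eps\to A := p(1-p)\big(b_2^0-b_1^0(1-s_h p)\big)$, $B^\eps\to B := p(1-p)(d_1-d_2)$ and $C^\eps\to C := (1-p)/K$.

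The core of the argument is to handle the two products $A^\eps n^\eps$ and $C^\eps u^\eps$, each a uniformly (hence strongly in $L^1$) convergent factor times a merely weakly convergent one. For $u^\eps$ this is the prescribed weak-star convergence in $L^\infty$; for $n^\eps$ I would upgrade Lemma \ref{lem:auxconv}: the uniform $L^\infty$ bound yields, up to a subfamily, weak-star convergence $n^\eps\rightharpoonup^* n_\ast$ in $L^\infty(0,T)$, and pairing against $W^{1,1}$ functions, together with Lemma \ref{lem:auxconv} and the density of $W^{1,1}$ in $L^1$, forces $n_\ast = Z(p)-u/(a(p)K)$ a.e. Then, for any $\phi\in L^1(0,T)$, the elementary ``weak times strong'' splitting
\[
\int_0^T \phi A^\eps n^\eps\,dt - \int_0^T \phi A\, n_\ast\,dt = \int_0^T (\phi A^\eps - \phi A)\, n^\eps\,dt + \int_0^T \phi A\,(n^\eps - n_\ast)\,dt
\]
tends to zero, the first integral because $\|\phi A^\eps - \phi A\|_{L^1}\to 0$ while $\|n^\eps\|_\infty$ stays bounded, the second by weak-star convergence since $\phi A\in L^1$; the same applies to $C^\eps u^\eps$. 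Hence $\frac{dp^\eps}{dt}\rightharpoonup^* A n_\ast + B + C u$ weakly-star in $L^\infty$. Since $\frac{dp^\eps}{dt}$ also converges weakly-star to $\frac{dp}{dt}$ (as $p^\eps\to p$ uniformly, by testing against $C_c^\infty$ functions), one identifies $\frac{dp}{dt}=A n_\ast + B + C u$ a.e.

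It remains to check that $A n_\ast + B + C u = f(p)+u g(p)$. Substituting $n_\ast = Z(p)-u/(a(p)K)$ and separating the $u$-free and $u$-linear parts gives $A n_\ast + B + C u = \big(A Z(p)+B\big) + u\big(C - A/(a(p)K)\big)$; a direct algebraic simplification using the explicit forms of $a$, $Z$, $A$, $B$, $C$ collapses the $u$-free bracket to $f(p)$ and the $u$-coefficient to $g(p)$ as in \eqref{deffdefg}. Finally $p(0)=0$ passes to the limit from $p^\eps(0)=0$, and uniqueness for this Lipschitz Cauchy problem promotes subsequential convergence to convergence of the whole family. I expect the only genuinely delicate step to be the product passage $A^\eps n^\eps\to A n_\ast$: because $n^\eps$ converges only weakly one cannot pass to the limit naively in this nonlinear term, and the whole argument hinges on correctly identifying the weak limit of $n^\eps$ through the slow equation (Lemma \ref{lem:auxconv}) and pairing it against the strongly convergent coefficient $A^\eps$.
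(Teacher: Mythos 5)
Your proposal is correct and follows essentially the same route as the paper: the decomposition $A^\eps n^\eps + B^\eps + C^\eps u^\eps$ is exactly the paper's $n\widehat{\beta}(p)+\beta_0(p)+u\widetilde{\beta}^\eps(n,p)$, and the ``strongly convergent coefficient times weakly convergent factor'' splitting, with the weak limit of $n^\eps$ identified via Lemma \ref{lem:auxconv}, is precisely the paper's argument. The only (harmless) differences are that you upgrade the $(W^{1,1})'$ convergence of $n^\eps$ to $L^\infty$ weak-star and carry out explicitly the final algebraic substitution of $n_\ast=Z(p)-u/(a(p)K)$, which the paper leaves implicit.
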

\begin{proof}
Let us first recast the equation on $p^\eps$ in system \eqref{syst.nepspeps} under the form
 \[
  \frac{d p^{\eps}}{d t} = \beta_{\eps} (n^{\eps}, p^{\eps}, u^{\eps}), \quad p^{\eps} (0) = 0,
 \]
 with
 \[
  \beta_{\eps} (n, p, u)=p (1 - p) \big( n (b_2^0 - b_1^0 (1 - s_h p)) + d_1 - d_2 \big)+ \frac{u(1-p)}{K (1 - \eps n)}
 \]
 so that we easily infer (with obvious notations) that $\beta_{\eps} \to \beta$ as $\eps \searrow 0$ with  $\beta_{\eps} (n, p, u) = n \widehat{\beta} (p) + \beta_0 (p) + u \widetilde{\beta}^{\eps}(n,p)$, $\beta (n, p, u) = n \widehat{\beta} (p) + \beta_0 (p) + u \widetilde{\beta} (p)$, $ \widehat{\beta} (p) =p (1 - p) (b_2^0 - b_1^0 (1 - s_h p))  $, $\beta_0(p)=( d_1 - d_2 )p (1 - p) $ and $\widetilde{\beta} (p)=\frac{1-p}{K (1 - \eps n)}$.
 
Using the previous considerations (and in particular the uniform boundedness of $p^\eps$, $n^\eps$ and $u^{\eps}$), we deduce that $p$ is in fact Lipschitz-continuous, and that $\widehat{\beta}$ and $\widetilde{\beta}$ are continuous on $[0, 1]$.

Now, let us show that $p$ satisfies the limit equation in a weak sense.
Let $\phi \in \mathcal{C}_c^{\infty} (0, T)$. We compute each term separately: the terms in $d p^{\eps} / dt$ and $\beta_0 (p^{\eps})$ converge by uniform convergence of $p^{\eps}$. Therefore, we have
\[
 \int_0^T\phi n^{\eps} \widehat{\beta} (p^{\eps}) = \underbrace{\int_0^T\phi n^{\eps} \widehat{\beta} (p)}_{\to \int_0^T\phi n \widehat{\beta} (p) } + \underbrace{\int_0^T\phi n^{\eps} \big( \widehat{\beta} (p^{\eps}) - \widehat{\beta} (p) \big)}_{\lvert \cdot \rvert \leq \lVert n^{\eps} \rVert_{\infty} o (1) }.
\]
and
\[
 \int_0^T\phi u^{\eps} \widetilde{\beta}^{\eps} (n^{\eps}, p^{\eps}) = \underbrace{\int_0^T\phi u^{\eps} \widetilde{\beta} (p)}_{\to \int_0^T\phi u \widetilde{\beta} (p) } + \underbrace{\int_0^T\phi u^{\eps} \big( \widetilde{\beta}^{\eps} (n^{\eps}, p^{\eps}) - \widetilde{\beta} (p) \big)}_{\lvert \cdot \rvert \leq M o(1) }.
\]
by using simultaneously the weak convergence properties of $(u^\eps)_{\eps>0}$ and $(n^\eps)_{\eps>0}$ (see Lemma \ref{lem:auxconv}) as well as the aforementioned convergence of $\beta_\eps$ to $\beta$.
Here, it is crucial that the limit $\widetilde{\beta}$ does not depend on $n$ but merely on $p$, and we rely on the uniform estimate on $n^{\eps}$.

Finally, a standard argument yields that $p$ must satisfy the equation in a strong sense since it is Lipschitz-continuous.
\end{proof}

We are now in position to determine the asymptotic behavior of the solutions of Problem \eqref{prob:full} as $\eps\searrow 0$, in the case where \eqref{asymp.b1b2} is assumed.

We have already observed that the invasion equilibrium $(0,n_2^*)$ is in particular changed into $(0, K(1-\eps\frac{d_2}{b_2^0}))$, which converges to $(0,K)$ as $\eps\searrow 0$. By using the result stated in Proposition \ref{prop:SFconvergence}, we formally infer that $(N^\eps(T))_{\eps>0}$ converges to $K$ and $(p^\eps(T))_{\eps>0}$ converges to some limit $p(T)\in [0,1]$ as $\eps\searrow 0$, meaning that $(n_1^\eps(T),n_2^\eps(T)))_{\eps>0}$ converges to $(K(1-p(T)),Kp(T))$. It follows that $J^\eps(u)$ converges, as $\eps \searrow 0$ to
\[
\frac{K^2}{2}(1-p(T))^2+\frac{K^2}{2}(1-p(T))^2=K^2(1-p(T))^2,
\]
where $p$ denotes the solution of \eqref{eq:p}.

This leads to introduce an asymptotic version $J^0$ of the cost function $J^\eps$ given by
\begin{equation}\label{J0}
J^0(u) = K^2(1-p(T))^2,
\end{equation}
as well as an asymptotic version of Problem \eqref{prob:full} reading
\begin{equation}\label{prob:reduced}
\boxed{
\inf_{u\in \mathcal{U}_{T,C,M}} (1-p(T))^2,
}
\tag{$\mathcal{P}_{\text{reduced}}$}
\end{equation}
where $p$ solves \eqref{eq:p} and $\mathcal{U}_{T,C,M}$ is defined by \eqref{def.UTCM}.

\medskip

In Section \ref{sec.analysisPfull}, we will analyze the connections between Problem \eqref{prob:full} and Problem \eqref{prob:reduced}, by providing a description of minimizers and highlighting good convergence properties as $\eps \searrow 0$.

\section{Analysis of Problem \texorpdfstring{\eqref{prob:full}}{Pfull} and numerics}\label{sec.analysisPfull}
\subsection{Description of minimizers}\label{sec.descript}
This section is devoted to the analysis of Problems \eqref{prob:full} and \eqref{prob:reduced}. It mainly contains two results:
\begin{itemize}
\item In Prop. \ref{prop:convergence}, we state a $\Gamma$-convergence type result relating the asymptotic behavior of the solutions of Problem \eqref{prob:full} to the ones of Problem \eqref{prob:reduced}. We also investigate existence issues for these problems.
\item In Theorem \ref{thm:reducedprob}, we completely describe the solutions of Problem \eqref{prob:reduced}.
\end{itemize}


\begin{definition}[$\Gamma$-convergence, \cite{Braides.Gamma}] 
One says that $J^{\eps}$ $\Gamma$-converges to $J^0$ if for $u \in \mathcal{U}_{T, C, M}$ and $(u^{\eps})_{\eps>0}$ converging weak-star to $u$ in $L^\infty(0,T)$, one has
  \begin{equation}
   \liminf_{\eps \to 0} J^{\eps} (u^\eps) \geq J^0 (u)
   \label{eq:Gammacond1}
  \end{equation}
  and there exists a sequence $(\overline{u}^{\eps})_{\eps}$, with $\overline{u}^{\eps} \rightharpoonup u$, such that
  \begin{equation}
   \limsup_{\eps \to 0} J^{\eps} (\overline{u}^{\eps}) \leq J^0 (u).
   \label{eq:Gammacond2}
  \end{equation}
\end{definition}
To investigate the convergence of minimizers for Problem \eqref{prob:full},  we will use the fundamental theorem of $\Gamma$-convergence (see e.g. \cite[Theorem 2.10]{Braides.Gamma}) stating that, under a $\Gamma$-convergence property and equicoercivity of the considered functional, closure points of the sequence of minimizers are themselves solution of an asymptotic problem.

\begin{proposition}
 Let $T, C, M > 0$ and assume that \eqref{cond:1} and \eqref{cond:2} hold.
 Problem \eqref{prob:full} and Problem  \eqref{prob:reduced} have (at least) a solution.
  
Moreover, let $(u^\eps)_{\eps>0}$ be a family of minimizers for Problem \eqref{prob:full}. Then, one has 
  \[
  \lim_{\eps \searrow 0}\inf_{u\in \mathcal{U}_{T,C,M}} J^\eps(u)=\inf_{u\in \mathcal{U}_{T,C,M}} J^0(u)
  \]
  and any closure point of this family (as $\eps \searrow 0$, for the $L^\infty$-weak star topology) is a solution of Problem \eqref{prob:reduced}.
 \label{prop:convergence}
\end{proposition}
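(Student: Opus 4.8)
The plan is to obtain existence of minimizers for both problems by the direct method, and then to read off the convergence of the infima and of the minimizers from the fundamental theorem of $\Gamma$-convergence, once the $\Gamma$-convergence of $J^\eps$ towards $J^0$ and an equicoercivity property have been verified. The structural fact underlying everything is that the admissible set $\mathcal{U}_{T,C,M}$ defined by \eqref{def.UTCM} is convex, bounded and closed in $L^\infty(0,T)$, hence sequentially compact for the weak-star topology by Banach--Alaoglu (both the pointwise bound $0\le u\le M$ and the integral bound $\int_0^T u\le C$ pass to weak-star limits), and that this topology is metrizable on the bounded set $\mathcal{U}_{T,C,M}$ since $L^1(0,T)$ is separable, so that the sequential notions used throughout are the correct ones.

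First I would prove existence. For a fixed $\eps>0$ the control enters the state system \eqref{sys:general} affinely, and the trajectories remain in a fixed compact set on $[0,T]$ (positivity together with the logistic structure giving uniform bounds independent of $u\in\mathcal{U}_{T,C,M}$). Thus, if $u_n\rightharpoonup u$ weak-star, an Arzel\`a--Ascoli extraction produces a uniform limit of $(n_1^\eps,n_2^\eps)$; passing to the limit in the integral formulation is straightforward because the only $u$-dependent term is linear, so weak-star convergence is tested against $L^1$ functions exactly as in the proof of Lemma \ref{lem:paris1519}, and uniqueness for the Cauchy problem identifies the limit. Hence $u\mapsto(n_1^\eps(T),n_2^\eps(T))$, and therefore $J^\eps$ defined by \eqref{Jueps}, is weak-star continuous; minimizing a continuous function over a weak-star compact set settles existence for \eqref{prob:full}. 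The same argument applied to the scalar equation \eqref{eq:p} makes $u\mapsto p(T)$ weak-star continuous, so $J^0$ given by \eqref{J0} is continuous as well, which yields a solution of \eqref{prob:reduced}.

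Next I would establish the $\Gamma$-convergence, which here follows from the stronger fact of \emph{continuous convergence}: whenever $u^\eps\rightharpoonup u$ weak-star one has $J^\eps(u^\eps)\to J^0(u)$. This is exactly Proposition \ref{prop:SFconvergence} together with the computation preceding \eqref{J0}. Indeed the uniform bounds $n_-\le n^\eps\le n_+$ force $N^\eps=K(1-\eps n^\eps)\to K$ uniformly, the proposition gives $p^\eps\to p$ uniformly with $p$ solving \eqref{eq:p}, whence $n_1^\eps(T)\to K(1-p(T))$ and $n_2^\eps(T)\to Kp(T)$; since $n_2^*\to K$ and $p(T)\in[0,1]$ one gets $(n_2^*-n_2^\eps(T))_+\to K(1-p(T))$, and summing the two squared contributions in \eqref{Jueps} produces $J^\eps(u^\eps)\to K^2(1-p(T))^2=J^0(u)$. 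The limit being independent of the extracted subfamily by uniqueness for \eqref{eq:p}, the whole family converges. This single statement yields both $\Gamma$-convergence inequalities: the liminf condition \eqref{eq:Gammacond1} holds in fact with equality, and the constant recovery sequence $\overline u^\eps\equiv u$, which is admissible and trivially converges weak-star to $u$, realizes \eqref{eq:Gammacond2}.

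Finally, equicoercivity is automatic here because all the functionals are defined over the single weak-star compact set $\mathcal{U}_{T,C,M}$, which serves as the common compact set required by the theory. The fundamental theorem of $\Gamma$-convergence then gives simultaneously the convergence of infima $\lim_{\eps\searrow0}\inf_{\mathcal{U}_{T,C,M}}J^\eps=\inf_{\mathcal{U}_{T,C,M}}J^0$ and the fact that every weak-star closure point of a family of minimizers of \eqref{prob:full} is a minimizer of \eqref{prob:reduced}. I expect the only genuinely delicate ingredient to be the passage to the limit in the cost, that is the identity $J^\eps(u^\eps)\to J^0(u)$ for an arbitrary converging family; everything else is soft functional analysis, and that delicate ingredient is precisely what the slow-fast analysis and the uniform a priori estimates of Proposition \ref{prop:SFconvergence} were built to supply.
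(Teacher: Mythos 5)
Your proposal is correct and follows essentially the same route as the paper: existence via the direct method using weak-star compactness of $\mathcal{U}_{T,C,M}$ and weak-star continuity of the cost (obtained by adapting the a priori bounds and limit passage of Proposition \ref{prop:SFconvergence}), then $\Gamma$-convergence deduced from the continuous convergence $J^\eps(u^\eps)\to J^0(u)$ supplied by that proposition, with the constant recovery sequence and the fundamental theorem of $\Gamma$-convergence concluding. The only cosmetic difference is that you establish existence for \eqref{prob:reduced} directly rather than reading it off from the $\Gamma$-convergence result as the paper does.
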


\paragraph{Interpretation.} Proposition \ref{prop:convergence} establishes that the controlled scalar equation \eqref{eq:p} is not only a fair approximation of the time dynamics of the infection frequency $n_2/(n_1+n_2)$ from system~\eqref{sys:general}, but also provides a sound framework for studying optimization problems. Morally, a release protocol defined by solving the simpler problem \eqref{prob:reduced} will be typically good for \eqref{prob:full} as well, provided that the fecundity is large.

We now solve Problem \eqref{prob:reduced} involving $p$, the solution to \eqref{eq:prop}. In other words
$$
 \frac{dp}{dt}  = f(p) + u g(p),
$$

Moreover, in what follows, we assume that \eqref{cond:2} is satisfied and will mainly use structural properties of $f$ and $g$. Namely they are $C^1$ functions on $[0,1]$ such that $g>0$ on $[0,1)$, $g(1)=0$, and $f$ is a bistable function (see Footnote \ref{footnote.bistable}). We denote by $\theta$ the unique real number satisfying
\[
f(\theta)=0\qquad\text{and}\qquad \theta\in (0,1),
\]
where $f$ is given by \eqref{deffdefg}, in other words,
\begin{equation}\label{def.theta}
\theta=\frac{1}{s_h}\left(1-\frac{d_1b_2^0}{d_2b_1^0}\right).
\end{equation}

\begin{proof}

Let us first investigate the existence of solutions for Problem  \eqref{prob:full} under the assumption \eqref{asymp.b1b2}. 

Fix $\eps>0$ and consider $(u_n^\eps)_{n\in\N}$ a minimizing sequence. According to the Banach-Alaoglu Bourbaki theorem, the set $ \mathcal{U}_{T,C,M}$ is compact for the weak star topology of $L^\infty(0,T)$. Therefore, up to a subsequence, $(u_n^\eps)_{n\in\N}$ converges to some element $u^\eps\in  \mathcal{U}_{T,C,M}$. Let us use the same notation to denote $(u_n^\eps)_{n\in\N}$ and any converging subsequence (with a slight abuse of notation).

An immediate adaptation of the proof of Proposition \ref{prop:SFconvergence} yields successively that $(\nn^\eps_n)_{n\in\N}$ (the sequence of solutions $\nn^\eps_n$ of System \eqref{sys:general} corresponding to $u=u_n^\eps$) is uniformly bounded and converges uniformly to some limit $\nn^\eps$ as $n\to +\infty$, which corresponds to the solution of System \eqref{sys:general} with $u=u^\eps$. We then infer that $(J^\eps(u_n^\eps))_{n\in\N}$ converges to  $J^\eps(u^\eps)$ and the conclusion follows.

To prove the convergence of minimizers as $\eps \searrow 0$ and the existence of solutions for Problem \eqref{prob:reduced}, we will show that $J^{\eps}$ $\Gamma$-converges to $J^0$ as $\eps \to 0$, and conclude by using the fundamental theorem of $\Gamma$-convergence (\cite[Theorem 2.10]{Braides.Gamma}).

With the expressions in \eqref{def.Nepspeps}, we have $n_2^\eps = K p^\eps(1-\eps n^\eps)$ and $n_1^\eps = K (1-p^\eps)(1-\eps n^\eps)$.
  Injecting into the expression of $J^\eps$ in \eqref{Jueps} we obtain
  \[
  J^{\eps} (u^{\eps}) = \frac{K^2}{2} \Big( (1-\eps n^{\eps}(T))^2 (1 - p^{\eps}(T))^2 + \Big[\big(1 - p^{\eps} (T) - \eps(\frac{d_2}{b_2^0}-p^{\eps}(T) n^{\eps}(T)) \big)_+\Big]^2 \Big).
  \]
  By the uniform estimates on $n^{\eps}, p^{\eps}$ provided in Proposition \ref{prop:SFconvergence}, we get that $d p^{\eps} / dt$ is uniformly bounded in $\eps$ on $[0, T]$, and thus by Arzel\`{a}-Ascoli theorem up to extraction $p^{\eps}$ converges uniformly to some $p$. 
{Using also the uniform bounds on $p^\eps$ and $n^\eps$, we can pass to the limit $\eps\to 0$ in the right hand side of the latter equality and get $J^0(p)$, where $J^0$ is defined in \eqref{J0}.}
  
  {In the particular case where the sequence $(\overline{u}^{\eps})_\eps$ is independent of $\eps$, i.e. $\overline{u}^{\eps} = u$,} we get that $\lim_{\eps \to 0} J^{\eps} (u) = J^0 (u)$, which implies \eqref{eq:Gammacond2}. Indeed, according to Proposition \ref{prop:SFconvergence}, the limit $p$ is unique and solves precisely
  \[
   \frac{dp}{dt} = f(p) + u g(p), \quad p(0) = 0.
  \]

Note that Proposition \ref{prop:SFconvergence} proves in fact the stronger result that $J^{\eps} (u^{\eps})$ converges to $J^0 (u)$ as $\eps$ goes to $0$, whence \eqref{eq:Gammacond1}.
\end{proof}

\begin{theorem}
 Let $T$, $C$, $M$  be three positive numbers and assume that $T > C/M$ (in other words that the horizon of time is large enough). 
 Any solution $u$ to \eqref{prob:reduced} satisfies $\int_0^T u^*(t) dt = C$ and is bang-bang ({\itshape i.e.} equal a.e. to 0 or $M$). 
 
 If $\displaystyle M \leq \max_{p \in [0, \theta]} \left(-\frac{f(p)}{g(p)}\right)$ then the unique solution to \eqref{prob:reduced} is given by $M \mathds{1}_{[T-C/M,T]}$.
 
 Otherwise, defining
  \begin{equation}
  C^* (M) = \int_0^{\theta} \frac{M dp}{f(p) + M g(p)},
  \label{expr:Cstar}
 \end{equation}
one has
 \begin{itemize}
  \item if $C < C^*(M)$ then the solution to \eqref{prob:reduced} is unique and equal to $u^* = M \mathds{1}_{[T-C/M, T]}$. In this case $J^0 (u^*) > (1 - \theta)^2$;
  \item if $C > C^*(M)$ then the solution to \eqref{prob:reduced} is unique and equal to $u^* = M \mathds{1}_{[0, C/M]}$. In this case $J^0 (u^*) < (1 - \theta)^2$;
  \item if $C = C^*(M)$ then there is a continuum of solutions to \eqref{prob:reduced} given by $u^*_{\lambda} = M \mathds{1}_{[\lambda, \lambda + C/M]}$ for $\lambda \in [0, T-C/M]$, with $J^0(u^*_{\lambda}) = (1 -\theta)^2$,
 \end{itemize}
 where $\theta$ is given by \eqref{def.theta}.
\label{thm:reducedprob}
\end{theorem}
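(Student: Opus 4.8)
The plan is to recast \eqref{prob:reduced} as the maximization of $p(T)$. Since $0\le p\le 1$ along every trajectory of \eqref{eq:p} (the constants $0$ and $1$ being sub- and super-solutions) and $x\mapsto(1-x)^2$ is strictly decreasing on $[0,1]$, minimizing $(1-p(T))^2$ is equivalent to maximizing $p(T)$. Two elementary reductions come first. Because $g\ge 0$, equation \eqref{eq:p} enjoys a comparison principle: if $u_1\le u_2$ a.e. then $p_{u_1}\le p_{u_2}$, with strict inequality at $t=T$ as soon as $u_1<u_2$ on a positive-measure set meeting $\{p<1\}$ (where $g>0$). I would use this to prove that any minimizer saturates the budget: if $\int_0^T u^*<C$, the standing assumption $T>C/M$ forces $u^*<M$ on a set of positive measure, on which one may add control and strictly increase $p(T)$, a contradiction; hence $\int_0^T u^*=C$.

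Next I would pass to an additive-control form via the change of unknown $v=\Phi(p):=\int_0^p dr/g(r)$, a smooth increasing bijection of $[0,1)$ onto $[0,+\infty)$ (recall $g(1)=0$). Then \eqref{eq:p} reads $\dot v=h(v)+u$ with $h(v)=(f/g)(\Phi^{-1}(v))$ and $v(0)=0$, and maximizing $p(T)$ amounts to maximizing $v(T)$. The function $h$ inherits the structure of $f/g$: it vanishes at $0$ and at $v_\theta:=\Phi(\theta)$, is negative on $(0,v_\theta)$, and — using the explicit form \eqref{deffdefg}, where $f/g$ is proportional to $p(p-\theta)/(1-s_hp)$ — has a single interior minimizer $v_-=\Phi(p_-)$ with $p_-\in(0,\theta)$. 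The quantity $\max_{[0,\theta]}(-f/g)=-h(v_-)$ is the relevant obstruction, namely the largest downward pull the control must overcome on the way to the threshold.

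I would then apply the Pontryagin Maximum Principle to $\dot v=h(v)+u$, handling $\int_0^T u\le C$ through the augmented state $y'=u$. The adjoint $q$ of $v$ solves $\dot q=-q\,h'(v)$, and by the transversality condition at $T$ together with linearity it keeps a constant positive sign, while the multiplier $q_2$ of the integral constraint is constant. The switching function is $w=q+q_2$ (equivalently $w=q_1g(p)+q_2$ in the original variable), and a direct computation yields the key identity
\[
\dot w=-\,q\,h'(v)\qquad\Big(\text{equivalently }\ \dot w=-\,q_1\,g^2\,(f/g)'(p)\Big),
\]
whose sign is opposite to that of $h'$. This gives the bang-bang property (a singular arc would force $(f/g)'\equiv 0$ on an interval, impossible for the explicit $f,g$) and, crucially, shows that the \emph{only} possible interior extrema of $w$ occur when $v$ passes through $v_-$, an up-crossing giving a local maximum and a down-crossing a local minimum. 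In the sub-threshold flux regime $M\le\max_{[0,\theta]}(-f/g)=-h(v_-)$, the full-flux trajectory stalls at the first zero of $h+M$, which lies below $v_-$; hence $v<v_-$ always, $w$ is strictly increasing, and $\{w>0\}$ is a terminal interval, so the optimal control is the single end-block $M\mathds{1}_{[T-C/M,T]}$.

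The remaining, and main, obstacle is to establish that the optimal control is a \emph{single} bang arc in the super-threshold regime $M>\max_{[0,\theta]}(-f/g)$, where $v$ may visit $v_-$. The aim is to show that along an optimal trajectory $v$ crosses $v_-$ at most once, necessarily upward, so that $w$ has at most one interior extremum (a maximum) and $\{w>0\}$ is again an interval; I would do this by combining the sign law for $\dot w$ with the budget bookkeeping below to exclude the wasteful down-crossings a second block would require. Once single-block structure is known, placement follows from an elementary monotonicity argument: writing candidates as $M\mathds{1}_{[\lambda,\lambda+C/M]}$, the state stays at the equilibrium $p=0$ until $\lambda$, rises along the full-flux trajectory $P$ (defined by $\dot P=f(P)+Mg(P)$, $P(0)=0$) to $P(C/M)$ at the end of the block, then evolves freely under $\dot p=f(p)$; if $P(C/M)<\theta$ the free phase makes $p$ decay toward $0$, so $p(T)$ strictly increases with $\lambda$ and the block sits at the end, if $P(C/M)>\theta$ it makes $p$ grow toward $1$, so $p(T)$ strictly decreases with $\lambda$ and the block sits at the start, and if $P(C/M)=\theta$ the state rests at the unstable equilibrium $\theta$ and every placement is optimal. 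The dichotomy is quantified by the minimal budget to reach $\theta$: parametrizing a crossing by $p$ gives cost $\int_0^\theta u/(f+ug)\,dp$, whose integrand is strictly decreasing in $u$ (as $f<0$ on $(0,\theta)$), so the cheapest crossing uses full flux and costs exactly $C^*(M)$ from \eqref{expr:Cstar}. Thus $P(C/M)\lessgtr\theta$ according to $C\lessgtr C^*(M)$ — with $C^*(M)=+\infty$ in the sub-threshold regime — which, together with the strict inequalities above, produces the three alternatives and the stated comparisons of $J^0$ with $(1-\theta)^2$. The hardest point remains the single-bang-arc claim in the super-threshold case: controlling the number of sign changes of the switching function once the state is allowed to reach the minimizer $p_-$ of $f/g$.
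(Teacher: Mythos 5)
Your overall strategy---recast \eqref{prob:reduced} as maximization of $p(T)$, saturate the budget by a monotonicity argument, derive a switching function $w$ whose derivative is positively proportional to $(f/g)'(p)$, and exploit the fact that $f/g$ has a single interior critical point $p^*\in(0,\theta)$---is essentially the paper's route (Lemmas \ref{lem.GDJ0}, \ref{lem:conssatur}, \ref{lem:fromord1}, \ref{lem:2032mP} and Remark \ref{rk.xi}); the change of unknown $v=\Phi(p)$ is cosmetic, and your treatment of the sub-threshold regime $M\leq\max_{[0,\theta]}(-f/g)$, of the placement argument once a single block is known, and of the cost $C^*(M)$ of the cheapest crossing are all correct. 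However, there is a genuine gap exactly where you flag it: in the super-threshold regime you never prove that an optimal control consists of a \emph{single} bang arc. The sign law $\dot w=-q\,h'(v)$ alone cannot exclude a second block: if a first block ends with $v\in(v_-,v_\theta)$, the free dynamics drives $v$ back down through $v_-$, $w$ then has a local minimum and may perfectly well re-cross the switching level, producing a second arc. The ``budget bookkeeping'' that is supposed to rule this out is announced but never carried out, and without it the trichotomy in $C^*(M)$, the uniqueness claims and the comparisons of $J^0$ with $(1-\theta)^2$ are not established.

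The paper closes precisely this gap not through the maximum principle but through a direct comparison over all bang-bang profiles. Introducing the increasing antiderivative $G_M$ with $G_M'=1/(f+Mg)$, it shows by induction over the blocks that every idle interval on which $0<p<\theta$ strictly decreases $G_M(p)$ (since $f<0$ there), so that any control of total mass $C$ made of several blocks ends each block with $G_M(p)$ at most $C/M$, with a strict loss as soon as there is genuine idle time after $p$ has left $0$. This yields $p(T)<G_M^{-1}(C/M)=\bar p(T)$ for the terminal single block when the threshold cannot be crossed and, when it can be crossed, forces $p>\theta$ at the end of the first block; from that moment $p>\theta>p^*$ permanently, so your own sign law makes $w$ monotone on the relevant side of the switching level and forbids any further block. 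You need to supply an argument of this type---a quantitative statement that splitting the budget is strictly wasteful---to complete the proof; the switching-function analysis by itself restricts where switchings can occur but does not bound their number.
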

Theorem \ref{thm:reducedprob} is illustrated on Fig. \ref{fig.illustheo1}.

\paragraph{Interpretation.} Theorem \ref{thm:reducedprob} implies that the best release protocol in the framework of the frequency model \eqref{eq:p} consists in a single release phase, either at the beginning of the time frame or at the end.
\red{This result may be interpreted as follows : If the amount of mosquitoes available is enough to cross the threshold $\theta$, then it is preferable to make the maximum effort at the beginning of the time protocol, since it is clear from the differential equation \eqref{eq:p} satisfied by the frequency $p$, that $p$ is increasing whenever $p > \theta$ even when $u=0$ (thus it is interesting to cross this threshold as soon as possible). On the contrary, if the amount of mosquitoes is not enough for $p$ to reach the threshold $\theta$, then $p < \theta$ and $p$ is decreasing when $u=0$. In this case, the optimum is achieved acting at the end of the time frame to avoid $p$ to decrease\footnote{Note that under the same assumptions, the same conclusion holds for any problem derived from \eqref{prob:reduced} by changing the Cauchy data $p(0) = 0$ in \eqref{eq:p} by an arbitrary value in $[0, 1)$. Therefore we can state that in general, if the threshold cannot be reached, then it is best to act only at the end of the time frame. This fact ultimately relies on the special variations of $f/g$, see the proof of Lemma \ref{lem:2032mP}.}.
To the best our knowledge, such a ``jump'' of optimizers is not standard in optimal control theory. Nevertheless, we point out reference \cite{LLNP} where a close phenomenon is observed for issue of minimizing with respect to the domain the principal eigenvalue of an elliptic operator with Robin boundary conditions of the kind $\partial_n u+\beta u=0$, where $\beta $ is a non-negative constant. Indeed, in this problem, two distinct families of optimizers can arise, depending of whether $\beta$ is larger ou smaller than a threshold value $\beta^*>0$.}

To what extent must this strategy be adapted when $\eps > 0$ is small but nonzero ({\itshape i.e.} in the real situation where fecundity is large but finite)? Numerical results in Section \ref{sec:num} begin to answer this challenging question.\\

The proof relies on several intermediary lemmas which we state and prove below.

\medskip

\red{In view of stating the (necessary) first order optimality conditions for Problem \eqref{prob:reduced}, let us compute the derivative of $J^0$. Let us introduce the adjoint state $q$ defined by
\begin{equation}\label{eq:adjoint}
- \dot{q} = \big( f'(p) + u g'(p) \big) q\text{ on }(0,T), \qquad q(T) = -2 (1-p(T)).
\end{equation}
Standard arguments yield existence and uniqueness of a solution for System  \eqref{eq:adjoint}.
Moreover, since $0<p(\cdot)<1$, we deduce that $q(\cdot)<0$ on $[0, T]$.}

\red{\begin{lemma}\label{lem.GDJ0}
Let $u\in \mathcal{U}_{T,C,M}$. Then, for every admissible perturbation\footnote{More precisely, we call ``admissible perturbation'' any element of the tangent cone $\mathcal{T}_{u,\mathcal{U}_{T,C,M}}$ to the set $\mathcal{U}_{T,C,M}$ at $u$. The cone $\mathcal{T}_{u,\mathcal{U}_{T,C,M}}$ is the set of functions $h\in L^\infty(0,T)$ such that, for any sequence of positive real numbers $\varepsilon_n$ decreasing to $0$, there exists a sequence of functions $h_n\in L^\infty(0,T)$ converging to $h$ as $n\rightarrow +\infty$, and $u+\varepsilon_nh_n\in\mathcal{U}_{T,C,M}$ for every $n\in\N$ (see e.g. \cite{MR1367820}).\label{footnote:cone}
} $h$, the G\^ateaux-derivative of $J^0$ at $u$ in the direction $h$ reads
\[
\langle dJ^0(u),h \rangle=\int_0^T h(t) q(t) g(p(t)) \, dt.
\]
where $p$ and $q$ denote respectively the solutions to the problem \eqref{eq:p} and the adjoint equation \eqref{eq:adjoint}.
\end{lemma}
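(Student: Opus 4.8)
The plan is to use the classical adjoint method. First I would differentiate the control-to-state map $u\mapsto p$ to produce a linearized (sensitivity) equation, then express $\langle dJ^0(u),h\rangle$ through the terminal value of this linearized state, and finally use the adjoint state $q$ to convert that boundary term into an integral over $[0,T]$.

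First I would establish the G\^ateaux-differentiability of the map $u\mapsto p$. Fix an admissible perturbation $h$ and, for $\eta>0$ small enough that $u+\eta h\in\mathcal{U}_{T,C,M}$ (which holds for directions in the tangent cone, see Footnote~\ref{footnote:cone}), let $p_\eta$ denote the solution of \eqref{eq:p} associated with the control $u+\eta h$. Since $f$ and $g$ are $C^1$ on $[0,1]$ and the states remain confined in $[0,1]$, a Gronwall argument yields that $(p_\eta-p)/\eta$ converges uniformly on $[0,T]$, as $\eta\searrow 0$, to the solution $w$ of the linearized problem
\begin{equation}
\dot w=\big(f'(p)+u\,g'(p)\big)\,w+g(p)\,h,\qquad w(0)=0.
\label{eq:linearized}
\end{equation}
Differentiating the reduced cost $J^0(u)=(1-p(T))^2$ accordingly gives
\[
\langle dJ^0(u),h\rangle=-2\,(1-p(T))\,w(T),
\]
consistently with the terminal condition chosen in \eqref{eq:adjoint}.

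It then remains to eliminate the terminal value $w(T)$, which is where the adjoint state intervenes. I would differentiate the product $q\,w$ and substitute both the adjoint equation \eqref{eq:adjoint} and the linearized equation \eqref{eq:linearized}:
\[
\frac{d}{dt}(q\,w)=\dot q\,w+q\,\dot w=-\big(f'(p)+u\,g'(p)\big)q\,w+q\Big[\big(f'(p)+u\,g'(p)\big)w+g(p)\,h\Big]=q\,g(p)\,h,
\]
the two cross terms cancelling exactly. Integrating over $[0,T]$ and using $w(0)=0$ together with $q(T)=-2(1-p(T))$ yields
\[
-2\,(1-p(T))\,w(T)=q(T)\,w(T)=\int_0^T h(t)\,q(t)\,g(p(t))\,dt,
\]
which is precisely the announced formula for $\langle dJ^0(u),h\rangle$.

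The main obstacle is the first step: rigorously justifying the differentiability of the flow and the validity of \eqref{eq:linearized}. One must first obtain $\lVert p_\eta-p\rVert_\infty=O(\eta)$ and then control the difference quotient $(p_\eta-p)/\eta-w$, both of which follow from the $C^1$ regularity of $f$ and $g$ (hence local Lipschitz bounds on $f'$ and $g'$) together with the uniform confinement $0\le p_\eta\le 1$, via Gronwall's lemma. A secondary point of care is that an admissible perturbation is, strictly speaking, only a limit of directions $h_n\to h$ with $u+\eta_n h_n\in\mathcal{U}_{T,C,M}$; a routine stability estimate shows that the resulting directional derivative is linear and continuous in $h$, so the computation above extends unchanged to the whole tangent cone. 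Once differentiability is secured, the adjoint manipulation itself is elementary, the sole delicate point being the exact cancellation of the terms carrying $f'(p)+u\,g'(p)$.
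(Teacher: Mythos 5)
Your proposal is correct and follows essentially the same route as the paper: differentiate the control-to-state map to obtain the linearized equation for $\delta p$, express $\langle dJ^0(u),h\rangle$ through $-2(1-p(T))\,\delta p(T)$, and use the adjoint equation \eqref{eq:adjoint} to turn the terminal term into $\int_0^T h\,q\,g(p)\,dt$ (your product-rule computation of $\frac{d}{dt}(qw)$ is just a streamlined version of the paper's integration by parts). Your additional remarks on justifying differentiability via Gronwall and on extending the formula to the full tangent cone are sound and, if anything, slightly more careful than the paper's appeal to ``standard arguments.''
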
}
\red{\begin{proof}
Let $h$ be an admissible perturbation of $u$ (see Footnote \ref{footnote:cone}). The G\^ateaux-differentiability of $J^0$ is standard and follows from the differentiability of the mapping $\mathcal{U}_{T,C,M}\ni u\mapsto p$, where $p$ denotes the unique solution of \eqref{eq:p}, itself deriving from the application of the implicit function theorem combined with variational arguments. \\
Let us then compute the G\^ateaux-derivative of $J^0$ at $u$ in the direction $h$, defined by
\[
\langle dJ^0(u),h \rangle = \lim_{\varepsilon\to 0} \frac{J^0(u+\varepsilon h)-J^0(u)}{\varepsilon}.
\]
Let $p$ be the solution to Eq. \eqref{eq:prop} and let us introduce $\delta p$, the G\^ateaux-differential of $p$ at $u$ in the direction $h$. Straightforward computations yield that $\delta p$ solves the linearized problem to \eqref{eq:prop},
\[\label{eq:dotp}
\dot{\delta p}(t) = f'(p) \delta p + u g'(p) \delta p + h g(p),
\qquad \delta p(0) = 0.
\]
Furthermore, differentiating the criterion $J^0$ yields $\langle dJ^0(u),h \rangle =-\delta p(T)(1-p(T))$. Let us multiply Eq. \eqref{eq:dotp} by $q$ and then integrate by parts on $(0,T)$. One gets
\begin{eqnarray*}
-\int_0^T \dot{q}(t)\delta p(t)\, dt=-\delta p(T)q(T)+\int_0^T(f'(t)+ug'(t))\delta p(t)q(t)\, dt+\int_0^T h(t)g(p(t))q(t)\, dt,
\end{eqnarray*}
which reduces to
$$
\delta p(T)q(T)=\int_0^T h(t)g(p(t))q(t)\, dt.
$$
By using the terminal condition on $q$, we obtain
\[
\langle dJ^0(u),h \rangle = - 2 (1-p(T)) \delta p(T) = q(T) \delta p(T),
\]
where $q$ is the solution to the adjoint equation \eqref{eq:adjoint}.
Then, we compute
\begin{align*}
0 = \int_0^T \delta p (\dot{q} + f(p) q + u g'(p) q) \,dt 
= \delta p(T) q(T) - \delta p(0) q(0) - \int_0^T h(t) q(t) g(p(t))\,dt,
\end{align*}
and it follows that
\[
\langle dJ^0(u),h \rangle = \int_0^T h(t) q(t) g(p(t)) \,dt.
\]
\end{proof}}

\red{We now prove that the $L^1$ constraint on the control $u$ is saturated.
\begin{lemma}\label{lem:conssatur}
If $u^*$ solves the optimization problem \eqref{prob:reduced}, then $\displaystyle \int_0^T u^*(t)\,dt = \min(C,TM)$.
\end{lemma}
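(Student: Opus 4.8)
The plan is to argue by contradiction, exploiting the monotone dependence of $p(T)$ on the control that is encoded in the first-order optimality condition of Lemma \ref{lem.GDJ0}. The starting observation is that every admissible control satisfies $\int_0^T u\,dt \le \min(C,TM)$, since $\int_0^T u\,dt\le C$ by definition of $\mathcal{U}_{T,C,M}$ and $\int_0^T u\,dt\le \int_0^T M\,dt = TM$ because $u\le M$ a.e. Hence $\min(C,TM)$ is the largest attainable value of the $L^1$ budget, and it suffices to rule out the strict inequality $\int_0^T u^*\,dt<\min(C,TM)$.

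Before that, I record two sign facts. Since $f(1)=0$ (bistability) and $g(1)=0$, the constant $p\equiv 1$ is an equilibrium of \eqref{eq:p}; by uniqueness for the Cauchy problem it cannot be reached in finite time from $p(0)=0$, so $0\le p(t)<1$ for all $t\in[0,T]$. Consequently $g(p(t))>0$ on $[0,T]$ (recall $g>0$ on $[0,1)$) and $1-p(T)>0$. Together with the sign of the adjoint state established just above, namely $q(\cdot)<0$ on $[0,T]$, this shows that the integrand $q(t)g(p(t))$ appearing in the G\^ateaux-derivative of Lemma \ref{lem.GDJ0} is strictly negative throughout $[0,T]$.

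Now suppose, for contradiction, that $u^*$ solves \eqref{prob:reduced} but $c_0:=\int_0^T u^*\,dt<\min(C,TM)$. Then $c_0<C$ and $c_0<TM=\int_0^T M$, so the set $A=\{t\in[0,T]:u^*(t)<M\}$ has positive Lebesgue measure. Fix $\delta>0$ small and a subset $B\subset A$ of positive measure on which $u^*\le M-\delta$; the perturbation $h=\mathds{1}_B$ is then an admissible direction in the sense of Footnote \ref{footnote:cone}, because for $\varepsilon_n\searrow 0$ small enough $u^*+\varepsilon_n h$ still satisfies $0\le u^*+\varepsilon_n h\le M$ a.e. and $\int_0^T(u^*+\varepsilon_n h)\,dt=c_0+\varepsilon_n|B|\le C$ (using $c_0<C$), so that $h\in\mathcal{T}_{u^*,\mathcal{U}_{T,C,M}}$. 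Applying Lemma \ref{lem.GDJ0} gives
\[
\langle dJ^0(u^*),h\rangle=\int_B q(t)\,g(p(t))\,dt<0,
\]
since $q\,g(p)<0$ on $[0,T]$ and $|B|>0$. This contradicts the first-order optimality condition $\langle dJ^0(u^*),h\rangle\ge 0$, valid for every admissible perturbation $h$ at a minimizer of $J^0$ over the convex set $\mathcal{U}_{T,C,M}$. Therefore $\int_0^T u^*\,dt=\min(C,TM)$.

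I expect the only delicate point to be the measure-theoretic verification that $h=\mathds{1}_B$ genuinely lies in the tangent cone and keeps both the pointwise and the integral constraint satisfied simultaneously, i.e. that slackness of the budget produces a \emph{nonnegative} admissible direction; everything else reduces to reading off the sign of $q\,g(p)$. Alternatively, one can bypass the optimality condition and argue directly by comparison: for $\tilde u=u^*+\delta\mathds{1}_B\ge u^*$ the difference $w=\tilde p-p^*$ solves a linear equation $\dot w=(a+u^*b)\,w+(\tilde u-u^*)g(\tilde p)$ with $w(0)=0$ and bounded measurable coefficients $a,b$ (obtained from the Lipschitz increments of $f$ and $g$); its Duhamel representation, whose forcing $(\tilde u-u^*)g(\tilde p)$ is nonnegative and not identically zero, yields $w(T)>0$, hence $(1-\tilde p(T))^2<(1-p^*(T))^2$, again contradicting optimality.
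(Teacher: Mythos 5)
Your proof is correct and follows essentially the same route as the paper's: argue by contradiction, exhibit a nonnegative admissible perturbation direction supported where both the pointwise bound and the integral budget are slack, and use the strict negativity of the integrand $q\,g(p)$ from Lemma \ref{lem.GDJ0} to contradict first-order optimality. Your extra sign verifications ($p<1$, hence $g(p)>0$ and $q<0$) and the alternative monotone-comparison argument are sound but go beyond what the paper's own proof records.
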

\begin{proof}
Let us argue by contradiction, assuming the existence of a positive number $\kappa$ and a set $\mathcal{I}\subset (0,T)$ of positive Lebesgue measure such that $0 \leq u^*\leq M-\kappa$ on $\mathcal{I}$ and $\int_0^T u^*(t)\, dt <C$. Then, there exists a positive function $h$ in $L^\infty(0,T)$ such that $u^*+h $ belongs to $\mathcal{U}_{T,C,M}$. Moreover, according to Lemma \ref{lem.GDJ0}, one has
$$
\lim_{\varepsilon \searrow 0}\frac{J^0(u^*+\varepsilon h)-J^0(u^*)}{\varepsilon}=\int_0^T h(t) q(t) g(p(t)) \, dt <0,
$$
where $p$ and $q$ denote here the solutions of Eq. \eqref{eq:p} and \eqref{eq:adjoint} associated to $u=u^*$, by using the positivity of $h$ and $g$, and the negativity of the adjoint state $q$. This is in contradiction with the optimality of $u^*$ and therefore, either $u^*=M$ a.e. (which arises if, and only if $TM\leq C$) or $\int_0^T u^*(t)\, dt=C$. The expected conclusion follows.
\end{proof} }

\begin{lemma}
Let $u \in \mathcal{U}_{T,C,M}$ be a solution of Problem \eqref{prob:reduced}. Define the {\itshape switching function} $w$ by $w(t) = g(p(t)) q(t)$ for all $t\in [0,T]$, where $p$ and $q$ denote here the solutions of Eq. \eqref{eq:p} and \eqref{eq:adjoint} associated to $u=u^*$. There exists $\Lambda < 0$ such that
 \begin{itemize}
  \item $u(t) = M \Rightarrow w(t) \leq \Lambda$,
  \item $0 < u(t) < M \Rightarrow w(t) = \Lambda$,
  \item $u(t) = 0 \Rightarrow w(t) \geq \Lambda$,
 \end{itemize}
each equality being understood up to a zero Lebesgue-measure set.
  \label{lem:fromord1}
\end{lemma}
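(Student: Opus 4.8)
The plan is to read Lemma \ref{lem:fromord1} as the first-order (Pontryagin-type) optimality condition for Problem \eqref{prob:reduced}, with $\Lambda$ playing the role of the Lagrange multiplier attached to the saturated integral constraint. Since $u$ minimizes $J^0$ over the convex set $\mathcal{U}_{T,C,M}$, the first step is to write that for every admissible perturbation $h\in\mathcal{T}_{u,\mathcal{U}_{T,C,M}}$ one has $\langle dJ^0(u),h\rangle\geq 0$, which by Lemma \ref{lem.GDJ0} reads $\int_0^T w(t)h(t)\,dt\geq 0$ with $w=g(p)q$. By Lemma \ref{lem:conssatur} the constraint $\int_0^T u = C$ is active (in the regime $T>C/M$), so the relevant tangent cone consists of those $h\in L^\infty(0,T)$ satisfying $h\geq 0$ a.e. on $\{u=0\}$, $h\leq 0$ a.e. on $\{u=M\}$, and $\int_0^T h\leq 0$. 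All the needed information will then be extracted by testing the optimality inequality against carefully chosen elements of this cone.

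The core of the argument is a mass-transfer technique. Denote $\mathcal{A}_0=\{u=0\}$, $\mathcal{A}_M=\{u=M\}$ and the singular set $\mathcal{A}_s=\{0<u<M\}$. Given $E_1,E_2\subset\mathcal{A}_s$ of positive measure, the function $h=\frac{|E_2|}{|E_1|}\mathds{1}_{E_1}-\mathds{1}_{E_2}$ and its opposite both belong to the tangent cone (since $\int_0^T h=0$ and $h$ is supported where $u$ is interior), whence $\int_0^T w h=0$; letting $E_1,E_2$ vary shows that $w$ equals a.e. a constant $\Lambda$ on $\mathcal{A}_s$. Transferring mass from $\mathcal{A}_s$ to $\mathcal{A}_0$ (take $h=\mathds{1}_{E_0}-\frac{|E_0|}{|E_s|}\mathds{1}_{E_s}$ with $E_0\subset\mathcal{A}_0$, $E_s\subset\mathcal{A}_s$, which is admissible because $h\geq 0$ on $\mathcal{A}_0$ and $\int_0^T h=0$) yields $\int_{E_0}w\geq \Lambda|E_0|$, hence $w\geq\Lambda$ a.e. on $\mathcal{A}_0$; the symmetric transfer between $\mathcal{A}_s$ and $\mathcal{A}_M$ gives $w\leq\Lambda$ a.e. on $\mathcal{A}_M$. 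When $\mathcal{A}_s$ is Lebesgue-negligible (the generic bang-bang case) one instead transfers mass directly between $\mathcal{A}_0$ and $\mathcal{A}_M$ via $h=\mathds{1}_{E_0}-\mathds{1}_{E_M}$ with $|E_0|=|E_M|$; this shows $\mathrm{esssup}_{\mathcal{A}_M}w\leq\mathrm{essinf}_{\mathcal{A}_0}w$, and any $\Lambda$ in this interval satisfies the three required implications.

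It remains to check $\Lambda<0$, where the sign comes essentially for free. Since $p(0)=0<1$ and $p\equiv 1$ is an equilibrium of \eqref{eq:p} (because $f(1)=g(1)=0$), uniqueness forces $p(t)<1$ on $[0,T]$, so $g(p(t))>0$ throughout; combined with $q<0$ on $[0,T]$ (recalled just before Lemma \ref{lem.GDJ0}) this gives $w=g(p)q<0$ on all of $[0,T]$. As $w$ is continuous on the compact $[0,T]$, we get $\max_{[0,T]}w<0$, and since $\Lambda$ coincides with a value of $w$ on $\mathcal{A}_s$ (or, in the bang-bang case, lies in $[\mathrm{esssup}_{\mathcal{A}_M}w,\mathrm{essinf}_{\mathcal{A}_0}w]\subset(-\infty,\max_{[0,T]}w]$), we conclude $\Lambda<0$.

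The step I expect to be the main obstacle is the rigorous handling of the tangent cone together with the saturated integral constraint: one must make sure the mass-transfer perturbations genuinely belong to $\mathcal{T}_{u,\mathcal{U}_{T,C,M}}$ in the precise sense of Footnote \ref{footnote:cone} (approximating $h$ by a sequence $h_n$ with $u+\varepsilon_n h_n$ feasible), and one must treat separately the degenerate configurations where $\mathcal{A}_s$, $\mathcal{A}_0$ or $\mathcal{A}_M$ is negligible so that a single multiplier $\Lambda$ can still be exhibited. Once the cone and the admissible transfers are in place, extracting $\Lambda$ and checking its sign are routine.
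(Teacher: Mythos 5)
Your proof is correct, and it is the same first-order (KKT) argument as the paper's, but with one genuine difference in how the multiplier is obtained. The paper introduces the Lagrangian $\mathcal{L}(u,\Lambda)=J^0(u)-\Lambda\bigl(\int_0^T u-C\bigr)$, asserts by ``standard arguments'' the existence of a $\Lambda$ making $(u,\Lambda)$ a saddle point, and then derives each of the three implications separately by testing $\mathcal{L}(u\mp\eta h,\Lambda)\geq\mathcal{L}(u,\Lambda)$ with one-sided perturbations $h=\mathds{1}_{G_n}$ supported in $\{u=M\}$, $\{u=0\}$ or $\{0<u<M\}$ and shrinking $G_n$ to a Lebesgue density point. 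You instead work directly with the variational inequality $\int_0^T w h\geq 0$ on the tangent cone and \emph{construct} $\Lambda$ by mass-transfer perturbations between the three level sets; this buys you a self-contained proof of the multiplier's existence (including the degenerate configurations where $\{0<u<M\}$ is negligible, which the saddle-point formulation handles implicitly) at the cost of a slightly longer case analysis. Two further remarks: your sign argument $\Lambda<0$, via $w=g(p)q<0$ on $[0,T]$ together with the fact that at least one of $\{u=0\}$, $\{0<u<M\}$ has positive measure, is a welcome addition — the paper states $\Lambda<0$ but its proof does not verify the sign explicitly. And the technical caveat you flag is real but mild: the transfers must be performed on subsets of $\{\kappa\leq u\leq M-\kappa\}$ (resp.\ with $\eta$ small relative to $\kappa$) so that $u+\varepsilon h$ stays in $[0,M]$, exactly as the paper's choice of $\eta$ small ensures $u-\eta\mathds{1}_{G_n}\in\mathcal{U}_{T,C,M}$; both proofs also rely on the active constraint $\int_0^T u=C$ from Lemma \ref{lem:conssatur} under the standing assumption $T>C/M$ of Theorem \ref{thm:reducedprob}, which is implicit in the lemma's statement.
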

\begin{proof}
 Introduce the Lagrangian function~$\mathcal{L}$ associated to Problem \eqref{prob:reduced}, defined by
\[
	\mathcal{L} : \mathcal{U}_{T,C,M}\times \R\ni (u,\Lambda) \mapsto J^0(u)  -\Lambda \left(\int_0^T u(t)\; dt -C \right).
\]
Standard arguments enable to show the existence of a Lagrange multiplier $\Lambda$ such that $(u,\Lambda)$ is a saddle-point of the Lagrangian functional $\mathcal{L}$. Moreover, according to Lemma \ref{lem:conssatur} and since $T>C/M$, we have necessarily $\int_0^T u =C$. 

Let $x_0$ be a density-one point of  $\{u=M\}$. Let $(G_{n})_{n\in \N}$ be a sequence of measurable subsets with $G_{n}$ included in $\{u=M\}$ and containing $x_0$. Let us consider $h=\mathds{1}_{G_{n}}$ and notice that $u -\eta h$ belongs to $\mathcal{U}_{T,C,M}$ whenever $\eta$ is small enough. Writing 
\[
\mathcal{L}(u - \eta h,\Lambda )\geq \mathcal{L}(u ,\Lambda ),
\] 
dividing this inequality by $\eta$ and letting $\eta$ go to 0, it follows that 
\[
-\langle d J^0(u),h \rangle +\Lambda \int_0^T h(t)\, dt\geq 0,
\]
or equivalently that
\[
- \int_{G_{n}} q(t)g(p(t))+\Lambda |G_{n}|\geq 0,
\]
according to Lemma \ref{lem.GDJ0}. Dividing this inequality by $|G_{n}|$ and letting $G_{n}$ shrink to $\{x_0\}$ as $n\to +\infty$ shows the first point of Lemma \ref{lem:fromord1}, according to the Lebesgue Density Theorem. The proof of the third point is similar, and consists in considering perturbations of the form $u+\eta h$ where $h$ denotes a positive admissible perturbation of $u$ supported in $\{u(t)=0\}$. Finally, the proof of the second point follows the same lines, by considering bilateral perturbations of the form $u\pm \eta h$ where $h$ denotes an admissible perturbation of $u$ supported in $\{0<u(t)<M\}$.
\end{proof}

 \begin{lemma}\label{lem:2032mP}
Let $u \in \mathcal{U}_{T,C,M}$ be a solution of Problem \eqref{prob:reduced} and let $p$ and $q$ be the solutions of Eq. \eqref{eq:p} and \eqref{eq:adjoint} associated to $u$. 

Under the assumption \eqref{cond:2}:
\begin{itemize}
\item if $M \leq \max_{[0,1]} - f/g$, then $u$ is bang-bang.
\item if $M > \max_{[0,1]} - f/g$, $u$ is either bang-bang or constant and equal to $-f(p^*) / g(p^*)$ (the latter case may occur only if $C = - T f(p^*) / g(p^*)$).
\end{itemize}
 \end{lemma}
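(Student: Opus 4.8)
The plan is to read off the optimal control from the first-order conditions already packaged in Lemma~\ref{lem:fromord1}, via the switching function $w(t)=g(p(t))q(t)$ and the multiplier $\Lambda<0$. The essential computation is the time-derivative of $w$ along the flow: using the state equation \eqref{eq:p} and the adjoint equation \eqref{eq:adjoint}, the terms carrying $u$ cancel, and one obtains
\[
\dot w = q\big(g'(p)f(p)-g(p)f'(p)\big)= -\,q(t)\,g(p(t))^2\,\Big(\tfrac{f}{g}\Big)'(p(t)).
\]
Since $q<0$ on $[0,T]$ and $g>0$ on $[0,1)$, the sign of $\dot w$ coincides with that of $(f/g)'(p)$. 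I would then record the structural fact, obtained by a direct computation from \eqref{deffdefg} under \eqref{cond:2}, that $-f/g$ has a single interior critical point $p^*=\tfrac{1}{s_h}\big(1-\sqrt{d_1b_2^0/(d_2b_1^0)}\big)\in(0,\theta)$, at which it attains its maximum; equivalently $f/g$ is strictly decreasing on $(0,p^*)$ and strictly increasing on $(p^*,1)$, so that $\dot w<0$ when $p<p^*$, $\dot w>0$ when $p>p^*$, and $-f(p^*)/g(p^*)=\max_{[0,1]}(-f/g)$.

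The heart of the argument concerns the singular set $\mathcal S=\{t:0<u(t)<M\}$. On $\mathcal S$, Lemma~\ref{lem:fromord1} forces $w\equiv\Lambda$, hence $\dot w=0$ almost everywhere there; by the displayed formula together with $q<0$ and $g>0$, this gives $(f/g)'(p)=0$, i.e. $p=p^*$ a.e. on $\mathcal S$. Since $p$ is Lipschitz, $\dot p=0$ at almost every point of the level set $\{p=p^*\}$, which contains $\mathcal S$ up to a null set; the state equation \eqref{eq:p} then yields $u=-f(p^*)/g(p^*)=\max_{[0,1]}(-f/g)$ a.e. on $\mathcal S$.

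This splits the proof according to $M$. If $M\le \max_{[0,1]}(-f/g)$, the value just found would force $u\ge M$ on $\mathcal S$, contradicting $u<M$ unless $|\mathcal S|=0$; hence $u$ is bang-bang. If $M>\max_{[0,1]}(-f/g)$, the singular value $-f(p^*)/g(p^*)$ lies in $(0,M)$ and is admissible, so I must still show that any singular piece forces $u$ to be globally constant. For this I would run a no-contact transition analysis at the endpoints of a maximal singular arc, on which $p\equiv p^*$ and $w\equiv\Lambda$. Just to the left, a $u=M$ piece obeys $\dot p=f+Mg=g\,(M+f/g)>0$, so $p$ reaches $p^*$ from below; then $\dot w<0$ there gives $w>\Lambda$, contradicting $w\le\Lambda$. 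A $u=0$ piece obeys $\dot p=f(p^*)<0$ (as $p^*<\theta$), so $p$ reaches $p^*$ from above; then $\dot w>0$ gives $w<\Lambda$, contradicting $w\ge\Lambda$. The right endpoint is symmetric. Thus no bang arc can touch a singular arc, so a singular arc can only be all of $[0,T]$, i.e. $u\equiv -f(p^*)/g(p^*)$; the saturation $\int_0^T u=C$ from Lemma~\ref{lem:conssatur} (valid when $C<TM$) then gives $C=-T f(p^*)/g(p^*)$.

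The step I expect to be the genuine obstacle is this last one. The transition analysis is transparent when the singular set is a finite union of intervals, but a priori $\mathcal S$ is only measurable, and one must rule out a pathological positive-measure singular set with empty interior that could interleave with bang pieces at arbitrarily fine scales. I would handle this by arguing with density points of $\{u=M\}$, $\{u=0\}$ and $\{p=p^*\}$, exploiting the continuity of $w$ and the fixed sign of $\dot w$ on each side of $p^*$ to propagate each contradiction to a full one-sided neighbourhood. Everything else, namely the formula for $\dot w$ and the monotonicity and uniqueness of the critical point of $f/g$, is a routine consequence of \eqref{deffdefg} and \eqref{cond:2}; I note in passing that the constant alternative is in fact incompatible with the initial condition $p(0)=0\neq p^*$, so it is ultimately vacuous, which is consistent with the purely bang-bang solutions found in Theorem~\ref{thm:reducedprob}.
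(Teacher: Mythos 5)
Your proof is correct and follows essentially the same route as the paper's: the switching function $w=q\,g(p)$, the sign of $\dot w$ governed by $(f/g)'$ with its unique critical point $p^*$, the identification of the singular control value $-f(p^*)/g(p^*)$, the case split on $M$ versus $\max_{[0,1]}(-f/g)$, and the junction analysis ruling out bang--singular transitions. Your extra care with density points and your remark that the constant alternative is incompatible with $p(0)=0\neq p^*$ go slightly beyond the paper's own write-up, but the underlying argument is the same.
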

 \begin{proof}
 Similarly to the statement of Lemma \ref{lem:fromord1}, let us introduce the function $w$ defined by 
 $$
 w(t)=q(t) g\circ p(t).
 $$
for all $t\in [0,T]$. In optimal control theory, $w$ is the so-called {\it switching function}.
 
 Let us differentiate $w$. We get
 \begin{align*}
  \frac{dw}{dt}(t)  &= q'(t) g (t)+ p' (t)g' (t)q (t)
  \\
  &= (-f'(t) - u(t)g'(t))g(t)q(t) + (f(t)+u(t)g(t))g'(t) q(t) 
  \\
  &= q(t) \big( f(p(t)) g'(p(t)) - f'(p(t)) g(p(t)) \big).
 \end{align*}
Combining this computation with Remark \ref{rk.xi} below yields the existence of a unique real number $p^* \in (0, 1)$ such that $(f/g)' (p^*) = 0$. Now, it follows from Lemma \ref{lem:fromord1} that one has $p'(t)=0$ on $\{0<u<M\}$ (see e.g. \cite[Remark 3.1.10]{henrot-pierre} for a proof of this fact). Plugging this equality in the main equation of \eqref{eq:p} yields that
$$
u(t)=-\frac{f(p^*)}{g(p^*)}\qquad \text{on}\quad \{0<u<M\}.
$$
At this step, we have proved that the optimal control $u$ satisfies $u(t)\in \{0,-f(p^*)/g(p^*),M\}$ for a.e. $t\in (0,T)$.

Let us distinguish between two cases:
\begin{itemize}
\item if $M\leq \max_{\bar p\in [0,\theta]}[-f(\bar p)/g(\bar p)]$ (note that the maximum of $-f/g$ over $[0,\theta]$ is the maximum of $-f/g$ over $(0,1)$), then, one gets immediately that $|\{0<u<M\}|=0$ (else, the necessary optimality conditions would show that $u$ does not belong to the admissible set $\mathcal{U}_{T,C,M}$).
\item else, if $M> \max_{\bar p\in [0,\theta]}[-f(\bar p)/g(\bar p)]$, we claim that
\begin{equation}\label{train:0742}
\{0<u<M\}=\left\{t\in (0,T)\mid p(t)=p^*\right\}=\left\{t\in (0,T)\mid u(t)=-\frac{f(p^*)}{g(p^*)}\right\}.
\end{equation}
Indeed, according to Lemma \ref{lem:fromord1}, one has 
$$
\{0<u<M\}\subset \left\{t\in (0,T)\mid p(t)=p^*\right\}\subset\left\{t\in (0,T)\mid u(t)=-\frac{f(p^*)}{g(p^*)}\right\}.
$$
By using that $u(t)\in \{0,-f(p^*)/g(p^*),M\}$ for a.e. $t\in (0,T)$, if $u(t)=-\frac{f(p^*)}{g(p^*)}$ on a positive measure set, then $u(t)<M$ because of the above assumption on $M$ and furthermore $u(t)>0$ (else, $f(p^*)=0$ which is impossible since $p^*\in (0,\theta)$). Hence, one has $\left\{t\in (0,T)\mid u(t)=-\frac{f(p^*)}{g(p^*)}\right\}\subset \{0<u<M\}$, whence the claim.

Using that $\{0<u<M\}=\{p=p^*\}$ and since $\{t\in (0,T)\mid w(t)=\Lambda\}\subset \left\{t\in (0,T)\mid p(t)=p^*\right\}$, it follows that
$$
\{w=\Lambda\}=\{0<u<M\}, \quad \{u=M\}=\{w<\Lambda\}\quad \text{and}\quad \{u=0\}=\{w>\Lambda\}.
$$

Between $0$ and $1$, $f$ changes sign only once, at $\theta$.
In addition, the switching function $w$ is decreasing if $p(t) < p^*$ and increasing if $p(t) > p^*$, since it is positively proportional to $(f/g)'$, which changes sign only once, and $f/g$ changes sign only once, and is decreasing at $0$, so $(f/g)'$ has the same sign as $p-p^*$.
Notice that necessarily, $\theta \geq p^*$. Indeed, $f/g$ is decreasing on $(0, p^*)$ and equal to $0$ at $0$ and $\theta$.

Let $\bar t$ be a switching point from $u=-\frac{f(p^*)}{g(p^*)}$ to $u=0$. Such a definition makes sense by interpreting $\bar t$ as a well-chosen endpoint of a connected component of $\{u=0\}$, which is an open set. 

According to the considerations above, $p$ must decrease since $p^* < \theta$, so $p(t) < p^*$ at $\bar t$, and therefore $w$ must decrease at $\bar t$. But this is in contradiction with the necessary optimality condition of Lemma \ref{lem:fromord1}. If at $\bar t$ we have $u=M$ then $p$ must increase if $M$ is large enough. Then $w$ must increase, and again this is in contradiction with Lemma~\ref{lem:fromord1}. Hence $|I| = 0$ or $|I| = T$. But $I= [0, T]$ is admissible if and only if $- T f(p^*) / g(p^*) = C$.

\end{itemize}
\end{proof}

At this step, we have shown that any optimal control $u$ is {\it bang-bang} whenever $T>C/M$. It remains to determine optimal configurations among {\it bang-bang} functions, which is the goal of what follows.

Let us define $p_M$ as the solution of 
\[
 \frac{dp_M}{dt} = f(p_M) + M g(p_M), \quad
 p_M(0) = 0.
 \]
Assume that $M > \max_{p\in[0,\theta]} -\frac{f(p)}{g(p)}$.
Then $\frac{dp_M}{dt}  = f(p_M) + M g(p_M) > 0.$
Introduce the function $G_M$ defined by $G'_M(p)= \frac{1}{f(p)+M g(p)}$ and $G_M(0)=0$. Then, $G_M$ is an increasing function and we have
\[
G_M(p_M(t)) = G_M(p_M(t_0)) + t-t_0, \quad \mbox{ and } \quad 
G_M(p_M(C/M)) = \frac{C}{M}.
\]

The use of all these results allows us to prove Theorem \ref{thm:reducedprob}.

\begin{proof}[Proof of Theorem \ref{thm:reducedprob}]
 We split the proof into three cases:
\begin{itemize}
\item {\it Case $p_M(C/M)<\theta$.} This condition is equivalent to $G_M(p_M(C/M)) < G_M(\theta)$ (since $G_M$ is increasing).
By Lemma \ref{lem:2032mP}, the control $u$ is bang-bang and the set where $u=M$ is open, (since from Lemma \ref{lem:fromord1}, it is the set of interval on which $g(p)q<\Lambda$).
Consider that $u$ is given by $u(t)=M \sum_{i\in \N}\mathds{1}_{(t_{2i+1},t_{2i+2})}$, 
where $t_0=0$ and $(t_i)_{i\in \N}$ is a non-decreasing sequence of times in $[0,T]$. 
We denote by $p$ the corresponding solution to \eqref{eq:prop}.

We want to compare with the control $\bar{u} = M \mathds{1}_{[T-C/M, T]}$, for which the corresponding solution to \eqref{eq:prop} is denoted $\bar{p}$.
Then, $\bar{p}(T) = G_M^{-1}(C/M)$.

Let us show that $p(T)<\bar{p}(T)=G_M^{-1}(C/M)$.
We use an induction to prove that for all $i\in \N$, $p(t_{2i}) < G^{-1}_M(C/M)$. 
Indeed, if we assume that for a $i\in\N^*$, we have for every $k\leq i$, $p(t_{2k}) < G_M^{-1}(C/M)<\theta$.
Then, on $(t_{2i},t_{2i+1})$, we solve the equation
\[
\dot{p} = f(p) , \qquad p(t_{2i}) < \theta.
\]
Since $f<0$ on $(0,\theta)$, it implies that $p$ is decreasing on $(t_{2i},t_{2i+1})$, thus $p(t_{2i+1})<p(t_{2i})$.
On $[t_{2i+1},t_{2i+2})$, we have
\[
G_M(p(t_{2i+2})) = G_M(p(t_{2i+1})) + t_{2i+2}-t_{2i+1} <
G_M(p(t_{2i})) + t_{2i+2}-t_{2i+1}.
\]
By induction, we deduce that
\begin{align*}
G_M(p(t_{2i+2})) < & G_M(p(t_{2i-2})) + t_{2i}-t_{2i-1} + t_{2i+2}-t_{2i+1}  \\
< & G(p(t_0)) + t_2-t_1 + \ldots + t_{2i+2}-t_{2i+1} \leq C/M,
\end{align*}
since $p(t_0)=0$ and $\displaystyle \sum_{k=0}^i (t_{2k+2}-t_{2k+1}) \leq \frac{C}{M}$.
We infer
\[
p(t_{2i+2}) < G_M^{-1}(C/M).
\]
This concludes the induction and the proof in this first case.

\item {\itshape Case $p_M(C/M) > \theta$.} 
We use the same strategy and introduce the solution $p$ to \eqref{eq:prop} with 
$u$ given by $u(t)=M \sum_{i\in \N}\mathds{1}_{(t_{2i},t_{2i+1})}$, 
where $(t_i)_{i\in \N}$ is an increasing sequence of time in $[0,T]$. 
We want to compare with the solution $\bar{p}$ for $\bar{u}=M\mathds{1}_{[0,C/M]}$.

We first observe that since $\bar{p}(C/M)=p_M(C/M) > \theta$ and $f>0$ on $(\theta,1)$, we have $\bar{p}$ increasing on $[C/M,T]$ and $\bar{p}(C/M)=G_M^{-1}(C/M)$.
If at time $t_{1}$, we have $p(t_1)<\theta$, then on $(t_1,t_2)$, $p$ is decreasing. Then $p(t_2)<p(t_1)\leq \bar{p}(t_1-t_0)$ and we may prove as above that as long as $p(t_{2i+1})<\theta$, we have $p(t_{2i+2})<\bar{p}\left(\sum_{k=0}^i(t_{2k+1}-t_{2k})\right)$.

As a consequence the solution $p$ associated with the optimal control should satisfy $p(t_1)>\theta$. Then, on $(t_1,T)$ the function $p$ solving \eqref{eq:prop} is increasing, thus on $(t_1,T)$, we have $p>\theta>p^*$. Then the switch function $w$ is increasing. However, we have $w > \Lambda$ on $(t_1,t_2)$ since $u=0$ from Lemma \ref{lem:fromord1}.
Hence, it is not possible to have $u=M$ for larger times.

\item {\itshape In the case where $p_M(C/M) = \theta$.}
In this case, we have $u=M \mathds{1}_{(\lambda,C/M+\lambda)}$ for any $0\leq \lambda \leq T-C/M$. 
Indeed, for such a function, we have $p \equiv 0$ on $[0,\lambda]$ and $p \equiv \theta$ on $[C/M+\tau,T]$. 
By contradiction, assume there is an interval on which $u=0$ between two intervals on which $u=M$, then on this interval $p$ is decreasing, and thus $p$ cannot reach the value $\theta$ at the final time of control, by comparison.
\end{itemize}
\end{proof}

\begin{remark} \label{rk.xi}
It is notable that the proof of Theorem \ref{thm:reducedprob} rests upon a property of the functions involved in Equation  \eqref{eq:p}, namely the existence of a unique $p^* \in (0, 1)$ such that $(f/g)'(p^*) = 0$, and $C \not= -T f(p^*)/g(p^*)$.
Indeed, letting $\xi = \frac{d_1 b_2^0}{d_2 b_1^0}$ we have
\[
 \frac{f}{g}(p) = K d_2 \big( \frac{p }{1-s_h p} \xi - p \big), \quad \big( \frac{f}{g} \big) '(p) = K d_2 \big( \frac{1}{(1-s_h p)^2} \xi - 1 \big).
\]
The roots of the second-order polynomial at the numerator of the right-hand side read 
\[
 p_{\pm} = \frac{1}{s_h} \big( 1 \pm \sqrt{\xi} \big),
\]
so assuming \eqref{cond:2} ({\itshape i.e.} $\xi < 1$) yields 
\[
 p^* = \frac{1}{s_h} \big( 1 - \sqrt{\xi} \big)
\]
(which indeed belongs to $[0,\theta)$ as a consequence of \eqref{cond:2}: from $1 - s_h < \xi < 1$ it follows that $0 < p^ * < (1 - \sqrt{1-s_h})/s_h < 1$ since $s_h \in (0, 1]$).
On the contrary, assuming $d_2 b_1^0 < d_1 b_2^0$ ({\it i.e.} $\xi > 1$) implies that there is no such $p^*$ in $[0, 1]$ (and in this case the control must be bang-bang, as a consequence of Lemma \ref{lem:2032mP} above).
\end{remark}

\begin{figure}[!ht]
\begin{center}
\includegraphics[width=4.8cm]{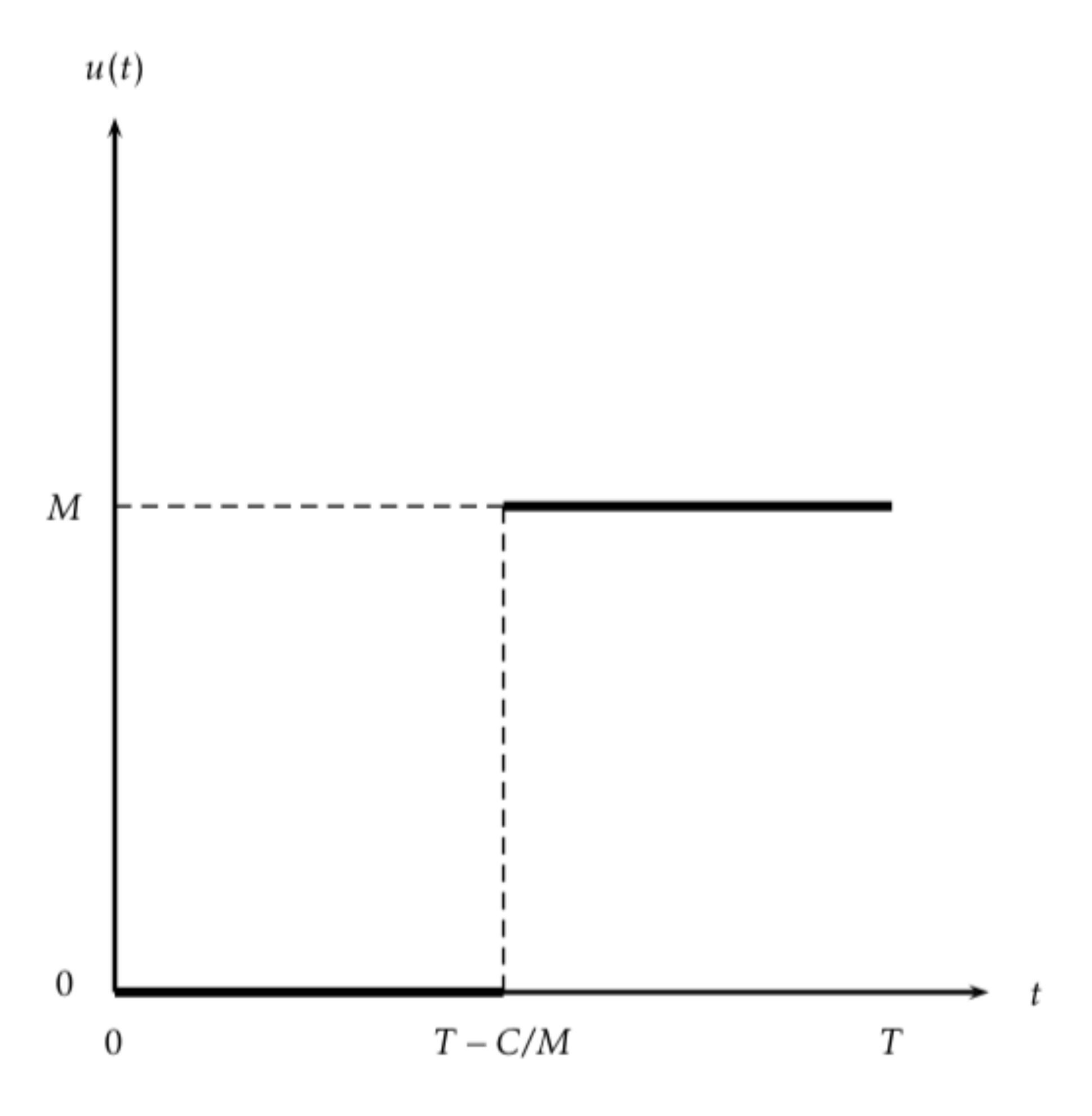}
\includegraphics[width=4.8cm]{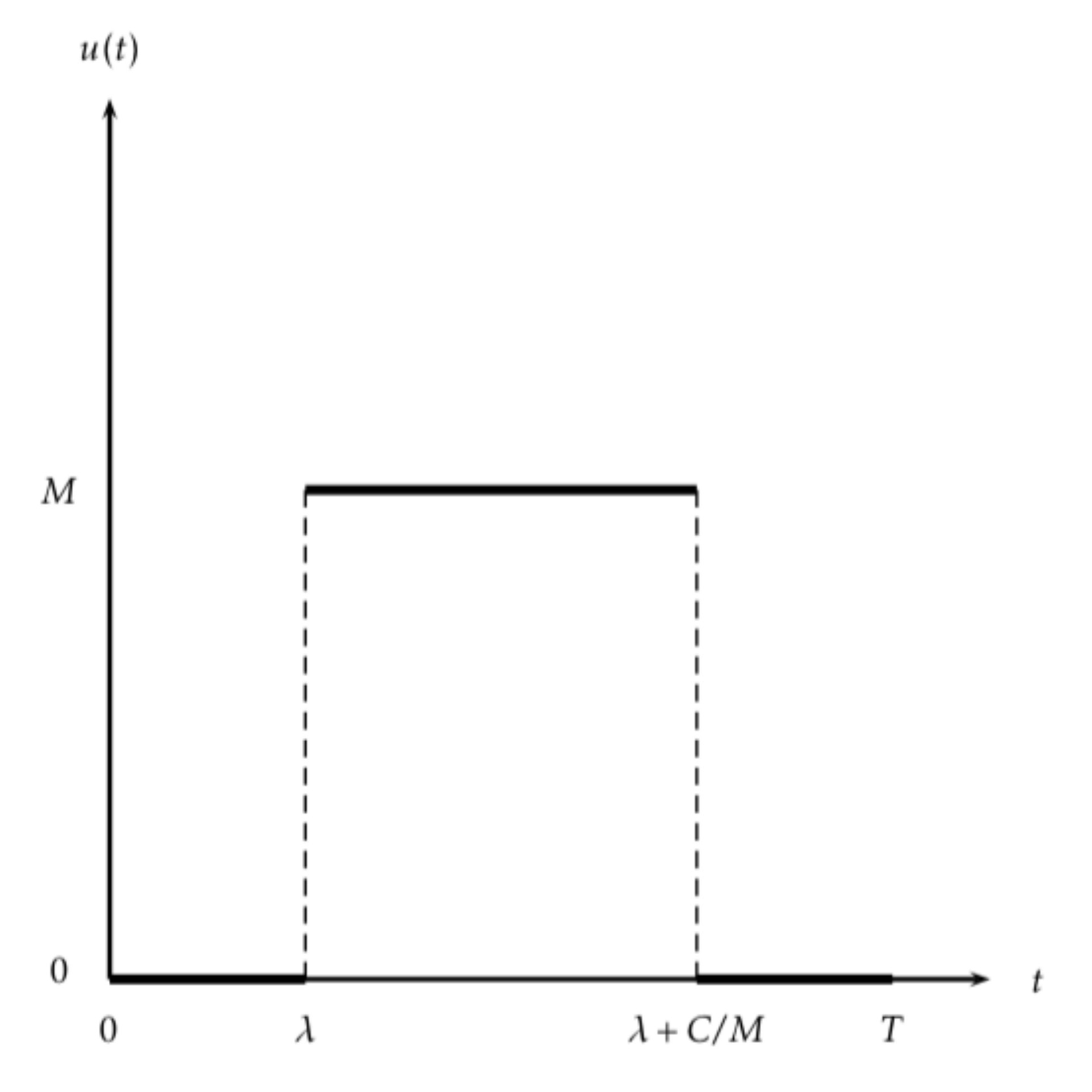}
\includegraphics[width=4.8cm]{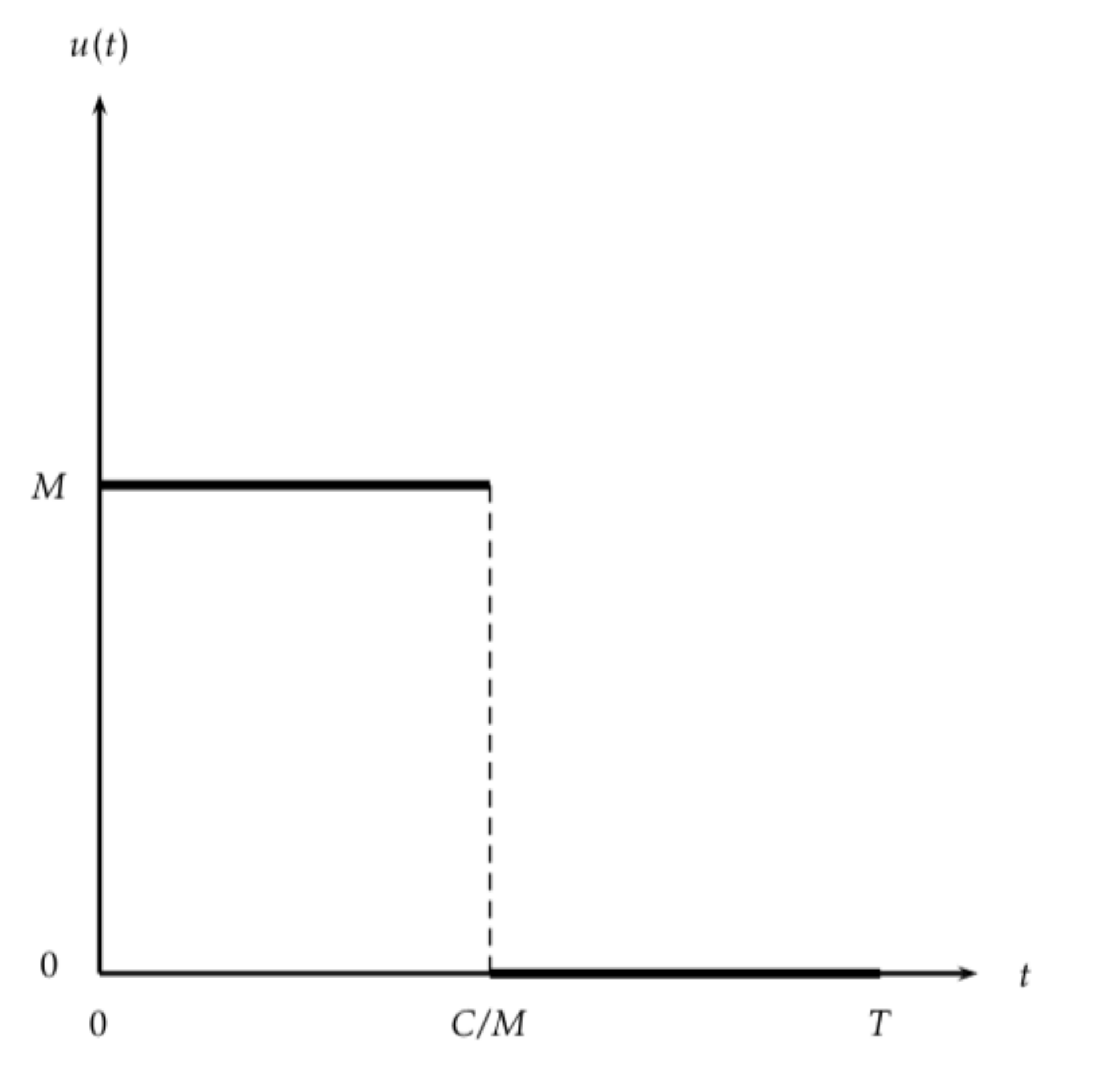}
\caption{Left: solution $u^*$ in the case $M > \max_{p \in [0, \theta]} -f(p)/g(p)$ and $C>C^*(M)$. Middle: one solution $u^*_\lambda$ in the case $M > \max_{p \in [0, \theta]} -f(p)/g(p)$ and $C=C^*(M)$. Right: solution $u^*$ in the case $M \leq \max_{p \in [0, \theta]} -f(p)/g(p)$ or $M > \max_{p \in [0, \theta]} -f(p)/g(p)$ and $C<C^*(M)$.
.\label{fig.illustheo1}}
\end{center}
\end{figure}

From Proposition \ref{prop:convergence} and Theorem \ref{thm:reducedprob}, we provide hereafter a more precise result about the convergence of optimal values for Problem \eqref{prob:full}  as $\eps \searrow 0$. 

\begin{corollary}\label{cor.strongCV}
Let $(u^\eps)_{\eps>0}$ be a family of minimizers for Problem \eqref{prob:full}. Then, $(u^\eps)_{\eps>0}$ converges strongly in $L^1(0,T)$ to a solution of Problem \eqref{prob:reduced} as $\eps\searrow 0$ (which is unique whenever $C\neq C^*(M)$ with the notations of Theorem \ref{thm:reducedprob}).
\end{corollary}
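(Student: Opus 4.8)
The plan is to combine the two main results already established, namely the $\Gamma$-convergence/equicoercivity statement of Proposition \ref{prop:convergence} and the complete description of the minimizers of the reduced problem in Theorem \ref{thm:reducedprob}, and to upgrade the weak-star convergence of $(u^\eps)_{\eps>0}$ to strong $L^1$ convergence. First I would recall from Proposition \ref{prop:convergence} that any closure point (for the $L^\infty$ weak-star topology) of the family of minimizers $(u^\eps)_{\eps>0}$ is a solution of Problem \eqref{prob:reduced}. By Theorem \ref{thm:reducedprob}, when $C\neq C^*(M)$ that solution is \emph{unique} and of the form $u^*=M\mathds{1}_I$ for a single interval $I$ (either $[T-C/M,T]$ or $[0,C/M]$). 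Since every weak-star closure point equals this same limit $u^*$, the whole family $(u^\eps)_{\eps>0}$ converges weak-star to $u^*$ in $L^\infty(0,T)$, hence in particular weakly in $L^1(0,T)$.

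The crux is then to pass from weak $L^1$ convergence to strong $L^1$ convergence toward a \emph{bang-bang} limit. The key structural fact I would exploit is that the limit $u^*$ takes only the extreme values $0$ and $M$: the target lies on the boundary of the admissible ball, where weak and strong convergence can be made to coincide. Concretely, I would argue as follows. By Lemma \ref{lem:conssatur} applied at each $\eps$ (or its analogue for Problem \eqref{prob:full}, which is proved the same way using the adjoint computation), the $L^1$ constraint is saturated, so $\int_0^T u^\eps\,dt=C=\int_0^T u^*\,dt$ for all $\eps$ small. I would then compute
\[
\int_0^T u^\eps(M-u^\eps)\,dt = M\int_0^T u^\eps\,dt - \int_0^T (u^\eps)^2\,dt,
\]
and use the fact that $u^*(M-u^*)=0$ a.e.\ together with weak-star convergence to show $\int_0^T (u^\eps)^2\,dt\to\int_0^T (u^*)^2\,dt$ and hence $\int_0^T u^\eps(M-u^\eps)\,dt\to 0$. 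Since $0\le u^\eps\le M$, the integrand $u^\eps(M-u^\eps)$ is nonnegative; combined with pointwise information this forces $u^\eps$ to concentrate on $\{0,M\}$ in measure.

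From there I would conclude strong $L^1$ convergence by a standard measure-theoretic argument: writing $\|u^\eps-u^*\|_{L^1}$ in terms of the symmetric-difference measure of the level sets and controlling it through the estimate $\int u^\eps(M-u^\eps)\to 0$ together with the saturated $L^1$ constraint. More precisely, on the set $\{u^*=0\}$ one has $\int_{\{u^*=0\}} u^\eps\to 0$ (since $u^\eps(M-u^\eps)\to0$ in $L^1$ forces $u^\eps\to0$ there in measure, and boundedness gives $L^1$), and the saturation of the total mass then forces $\int_{\{u^*=M\}}(M-u^\eps)\to0$ as well; adding these yields $\|u^\eps-u^*\|_{L^1}\to0$. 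The main obstacle is precisely this final quantitative step: weak-star convergence alone never gives strong convergence, and the argument genuinely relies on the bang-bang structure of the limit (equivalently, on the strict convexity defect captured by $\int u^\eps(M-u^\eps)$). When $C=C^*(M)$ the reduced problem has a continuum of minimizers, the limit need not be unique, and this strong-convergence argument degrades, which is exactly why the uniqueness hypothesis $C\neq C^*(M)$ is imposed in the statement.
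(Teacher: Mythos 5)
Your overall strategy coincides with the paper's: Proposition \ref{prop:convergence} gives weak-star convergence of the whole family to the unique minimizer $u^*$ of \eqref{prob:reduced} when $C\neq C^*(M)$, and the upgrade to strong $L^1$ convergence rests on the fact that $u^*$ is bang-bang, i.e.\ an extreme point of $\mathcal{U}_{T,C,M}$. The paper simply cites an abstract result (weak convergence to an extreme point of a convex set implies strong $L^1$ convergence), whereas you open that black box and argue by hand; that is a legitimate and arguably more self-contained route.

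However, one justification in your unpacking is a non-sequitur. You claim that $\int_0^T u^\eps(M-u^\eps)\,dt\to 0$ forces $u^\eps\to 0$ in measure on $\{u^*=0\}$. It does not: that quantity only controls the distance of $u^\eps$ to the two-point set $\{0,M\}$, and cannot distinguish whether $u^\eps$ sits near $0$ or near $M$ at a given point (take $u^\eps=M\mathds{1}_{[0,C/M]}$ with $u^*=M\mathds{1}_{[T-C/M,T]}$: the product vanishes identically, yet $u^\eps$ is nowhere near $0$ on a chunk of $\{u^*=0\}$). The localization must come from the weak-star convergence itself: testing against $\mathds{1}_{\{u^*=0\}}\in L^1$ gives $\int_{\{u^*=0\}}u^\eps\to\int_{\{u^*=0\}}u^*=0$, and since $u^\eps\geq 0$ this is already $\int_{\{u^*=0\}}|u^\eps-u^*|\to 0$; symmetrically, testing against $\mathds{1}_{\{u^*=M\}}$ and using $u^\eps\leq M$ gives $\int_{\{u^*=M\}}(M-u^\eps)=\int_{\{u^*=M\}}|u^\eps-u^*|\to 0$. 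Adding the two yields the conclusion directly, so the entire detour through $\int u^\eps(M-u^\eps)$ and through the (unproved, and unnecessary) saturation identity $\int_0^T u^\eps\,dt=C$ for the full problem at fixed $\eps$ can be dropped. With that repair your argument is correct and is essentially an elementary proof of the extremality result the paper invokes by reference.
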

\begin{proof}
According to Proposition \ref{prop:convergence}, we know that $(u^{\eps})_{\eps>0}$ converges weak star in $L^\infty(0,T)$ to a solution of Problem \eqref{prob:reduced}, say $u^*$.

Since $u^*$ is an extremal point of the convex set $\mathcal{U}_{T,C,M}$ to which all elements of the sequence $(u^{\eps})_{\eps}$ belong, it follows from \cite{Barder.Equivalence} that the $L^{\infty}$-weak$*$ convergence (that is here, $L^1$-weak convergence) implies strong convergence in $L^1$, and therefore 
\[
 \lim_{\eps \to 0} \int_0^T \lvert u^{\eps}(t) - u^* (t) \rvert dt  = 0.
\]
Finally, we conclude by observing that, whenever $C\neq C^*(M)$, the solution to Problem  \eqref{prob:reduced} is unique according to Theorem \ref{thm:reducedprob}.
\end{proof}
\subsection{Numerics}
\label{sec:num}
This section is devoted to computing the solution of Problem \eqref{prob:full} and to illustrating the relations with its reduced version \eqref{prob:reduced}.

All the simulations are obtained with a direct method applied to the optimal control problem \eqref{prob:full}, consisting in discretizing System \eqref{sys:general}, the control, and to reduce the optimal control problem to some minimization problem with constraints. 
To this aim, we used the open-source optimization routine from \texttt{IPOPT} (see \cite{Wachter.Implementation}) combined with \texttt{ AMPL} modeling language (see \cite{Fourer.Modeling}). This enables the computation of a local minimizer for a discretized version of \eqref{prob:full}. 

\paragraph{Choice of numerical parameters and methods.}
Populations are normalized by setting $K = 1$, and Table \ref{table:params} yields the values used for the other parameters.
The time-dynamics (the slow-fast system \eqref{syst.nepspeps} depending on $\eps$) are discretized with the Runge-Kutta implicit scheme Lobatto IIIC of order $2$ (two stages).  This scheme is asymptotic preserving in $\eps$ (see \cite{Hairer.Error}) and allows for sound comparison of the simulations across a range of values of this parameter.

We obtain a solution $\nn_{\Delta t} \in  (\R_+)^{2 N_d}$ as well as an approximate local minimizer for the discretized problem \eqref{prob:full}, $\widehat{u}^{\eps, \Delta t} \in [0, M]^{N_d}$. \red{Finally, note that several choices of initialization have been tested (among which constant and random ones).}

\begin{table}[!ht]
\centering
 \begin{tabular}{|c|c|c|c|}
  \hline
  \emph{Category} & \emph{Parameter} & \emph{Name} & \emph{Value or range} \\
  \hline
  Discretization & $\Delta t$ & Time step & $[0.0004,0.0015]$ \\
  \hline
  Singular limit & $1/\eps$ & Birth rates normalization & $[1, 2000]$ \\
  \hline
  \multirow{3}{*}{Optimization} & $T$ & Final time & $10$ \\
  \cline{2-4}
  & $C$ & Maximal release number & $[0.15, 0.75]$ \\
  \cline{2-4}
  & $M$ & Maximal release flux & $10$ \\
  \hline
  \multirow{5}{*}{Biology} & $b_1^0$ & Normalized wild birth rate & $1$ \\
  \cline{2-4}
  & $b_2^0$ & Normalized infected birth rate & $0.9$ \\
  \cline{2-4}
  & $d_1$ & Wild death rate & $0.27$ \\
  \cline{2-4}
  & $d_2$ & Infected death rate & $0.3$ \\
  \cline{2-4}
  & $s_h$ & Cytoplasmic incompatibility level & $0.9$ \\
  \hline
 \end{tabular}
\caption{Parameters for the numerical resolution of \eqref{prob:full}}
\label{table:params}
\end{table}

\paragraph{Results.}
It is convenient to introduce the number of steps in the time discretization $N_d = T/\Delta t$.

For the parameters given in Table \ref{table:params}, we can compute the critical value $C^*(M)$ from Theorem~\ref{thm:reducedprob} numerically: it is close to $0.24$. Therefore we choose three values of $C$ ($0.15$, $0.4$ and $0.75$) both above and below this threshold, so as to get contrasting results.
On Figure \ref{fig:numres} below, solutions of Problem \eqref{prob:full} are computed for these three different values of the integral bound $C$ and for $\eps=1$. We observe that the set $I_M^{\Delta t, \kappa} := \{ k \in \llbracket 1, N_d \rrbracket, \, \widehat{u}^{\eps, \Delta t}_k \geq M - \kappa \}$ (approximating the set $\{u=M\}$), for $\kappa$ small enough, is made of two segments containing either $1$ or $N_d$. Let us denote these two segments $\llbracket 1, k_0 (\Delta t) \rrbracket$ and $\llbracket k_1 (\Delta t), N_d \rrbracket$. 
It seems that 
\begin{itemize}
\item a {\itshape relaxation type} phenomenon may occur for optimal controls meaning that the solution is not {\itshape bang-bang}.
\item the set $I_{\textrm{relax}}^{\Delta t, \kappa} := \{ k \in \llbracket 1, N_d \rrbracket, \, \kappa \leq \widehat{u}^{\eps, \Delta t}_k \leq M - \kappa \}$ (approximating the set $\{0<u<M\}$) seems to be a segment for $\kappa$ small enough.
\item for small values of $C$, $k_0 = 0$, $k_1 = N_d$ and there is replacement failure, suggesting that it is necessary to release a minimal number of infected mosquitoes in order to guarantee population replacement.
\end{itemize}

\begin{figure}[!ht]
\centering 
 \includegraphics[width=.32\textwidth]{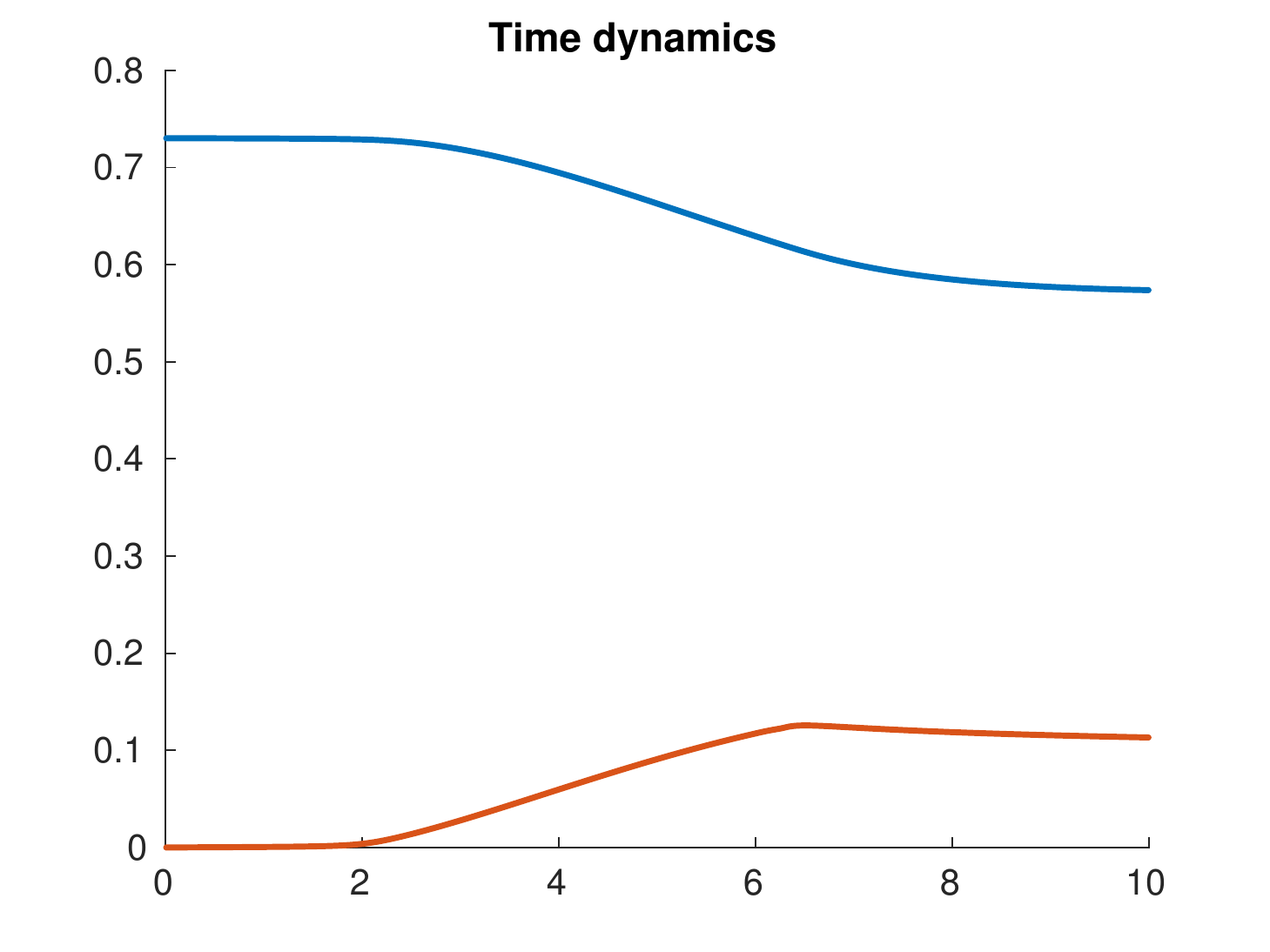}
 \includegraphics[width=.32\textwidth]{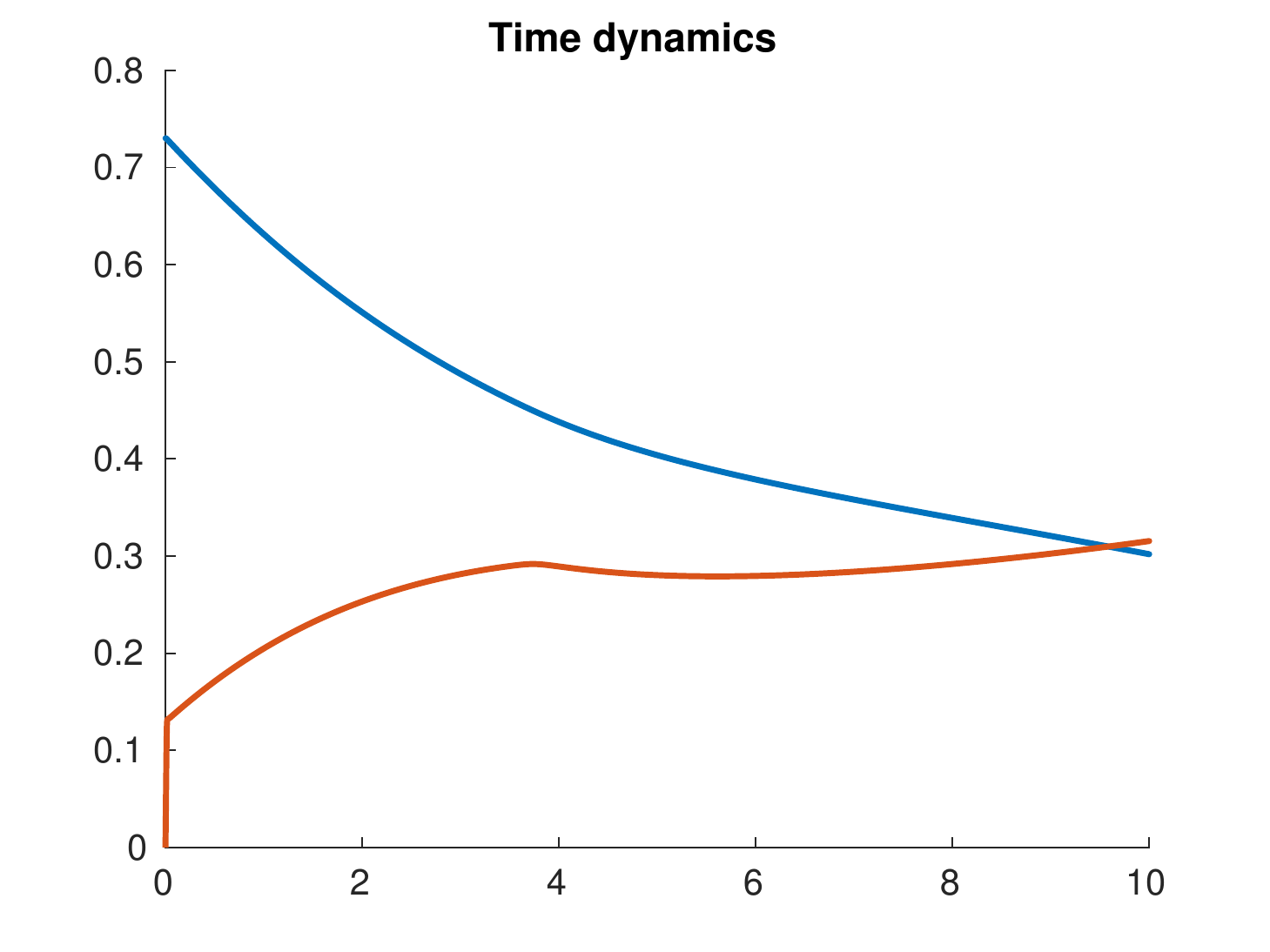}
 \includegraphics[width=.32\textwidth]{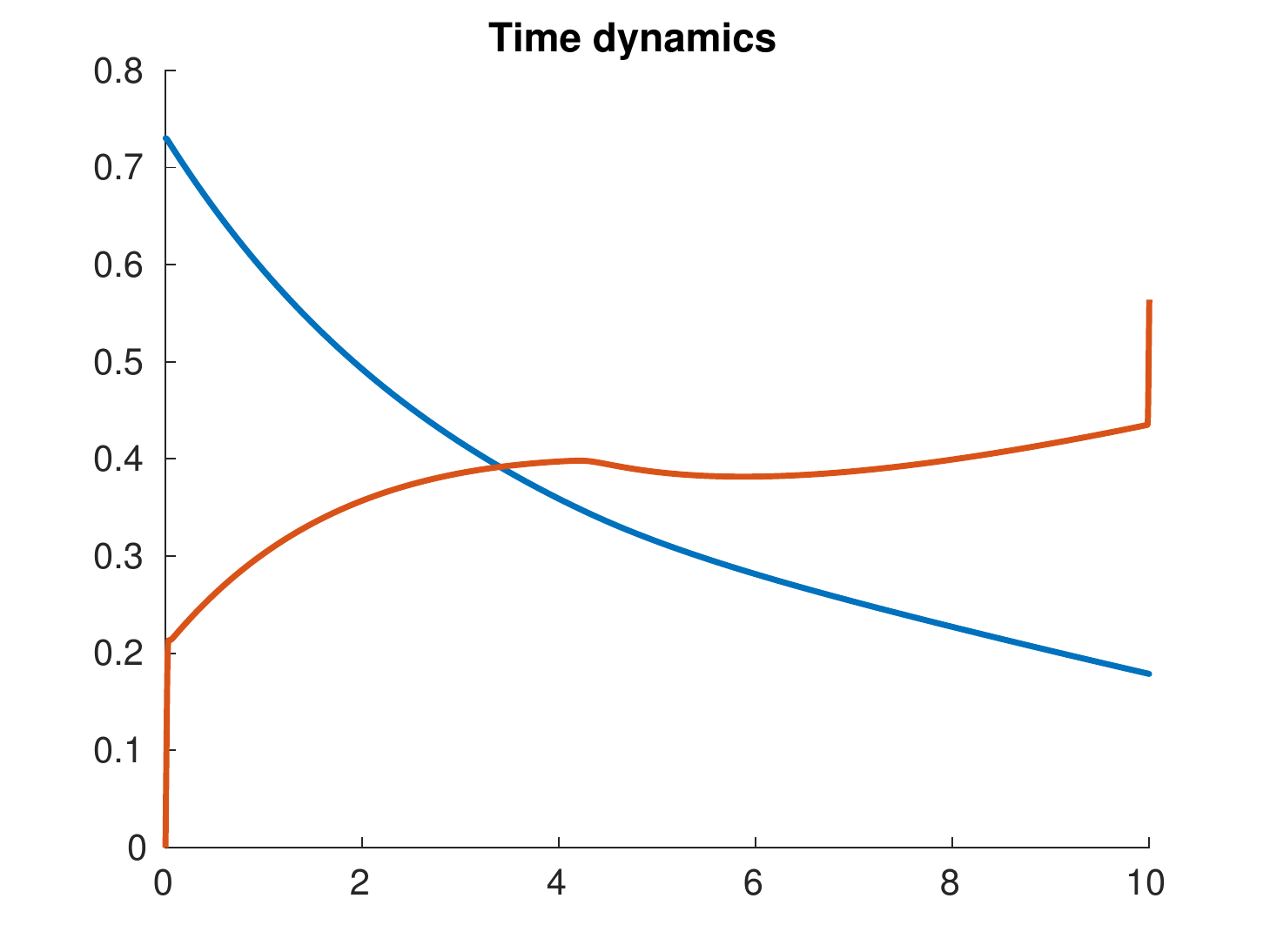}\\
 \includegraphics[width=.32\textwidth]{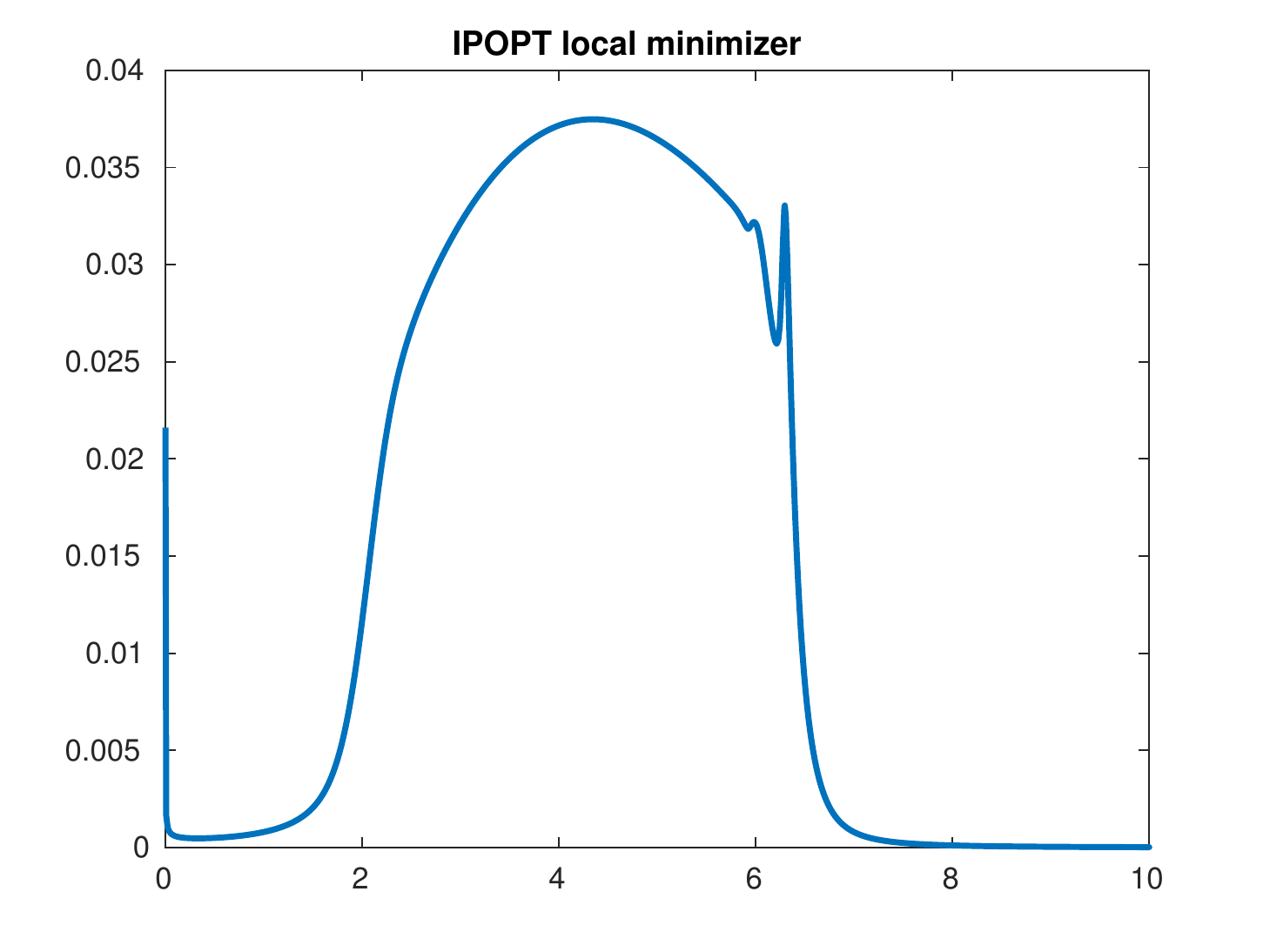} 
 \includegraphics[width=.32\textwidth]{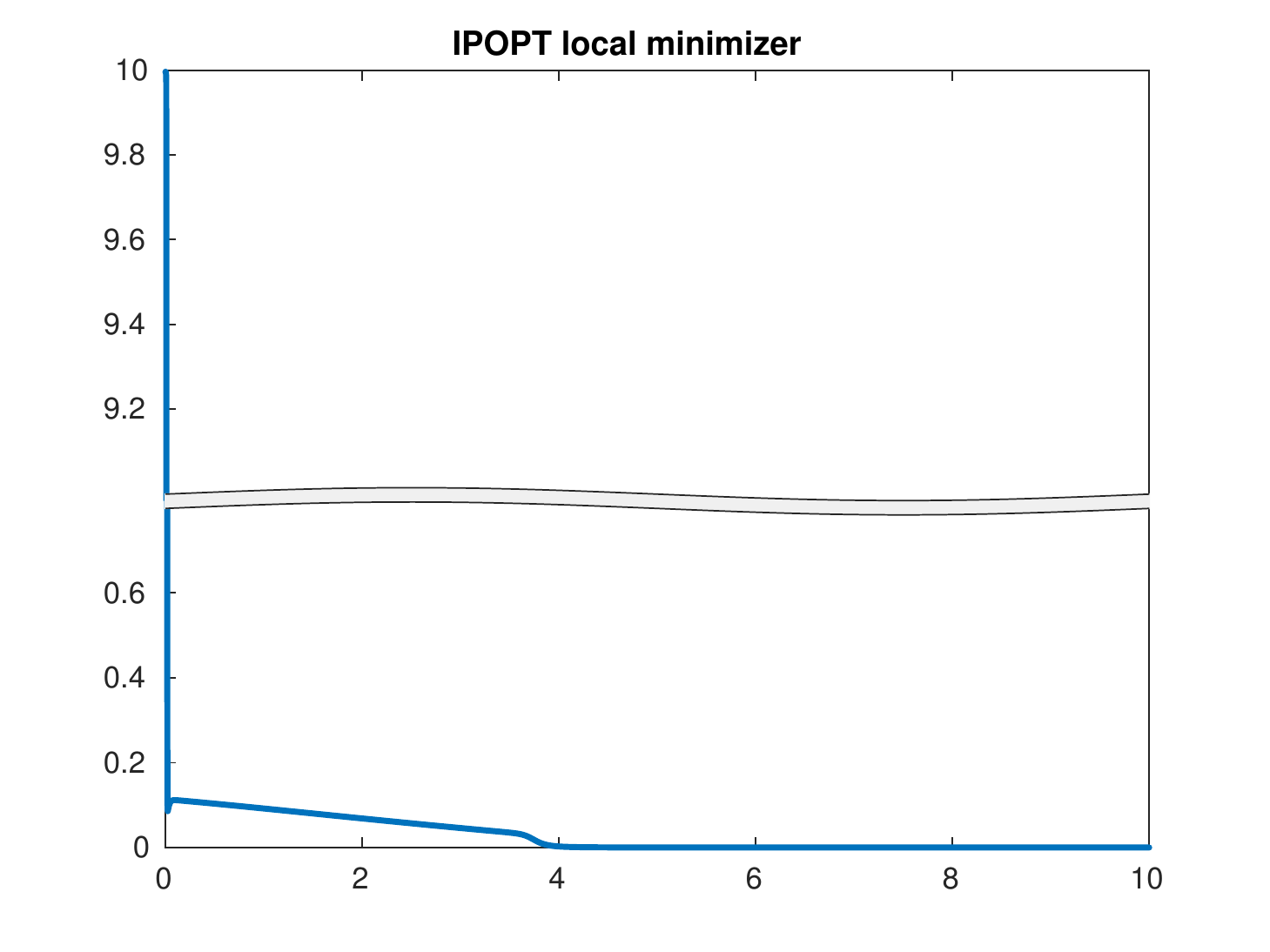} 
 \includegraphics[width=.32\textwidth]{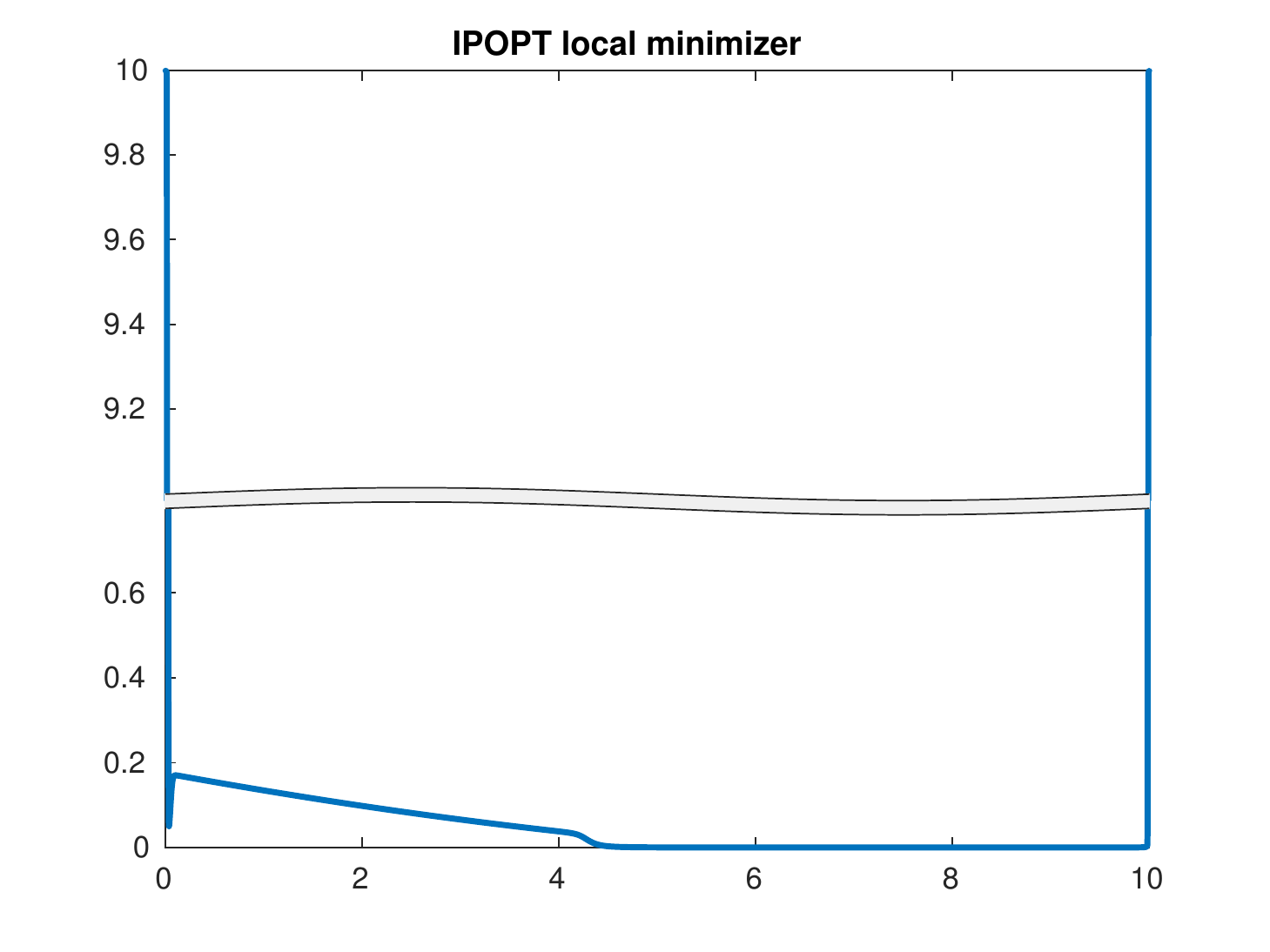}
 \caption{Top: time dynamics (plots of the wild mosquitoes density $n_1$ starting from a positive value {\itshape versus} the {\itshape Wolbachia}-infected mosquitoes density $n_2$ starting from $0$). Bottom: numerical optimal control. From the left to the right: $C = 0.15$, $C = 0.4$ and $C = 0.75$. The parameter $\eps$ is fixed to $1$. \label{fig:numres}}
\end{figure}

\begin{figure}[!ht]
\centering
 \includegraphics[width=.39\textwidth]{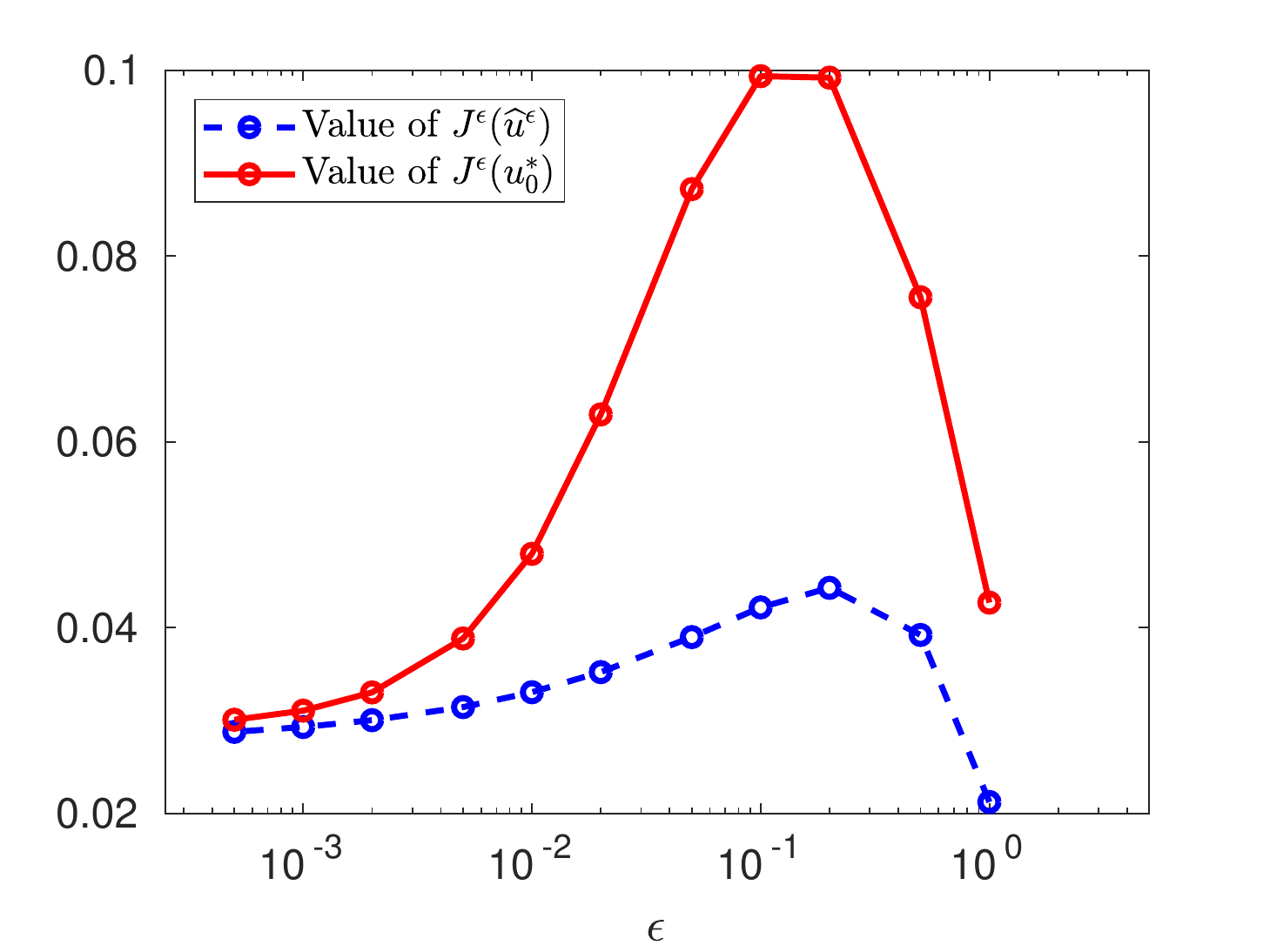}
 \includegraphics[width=.39\textwidth]{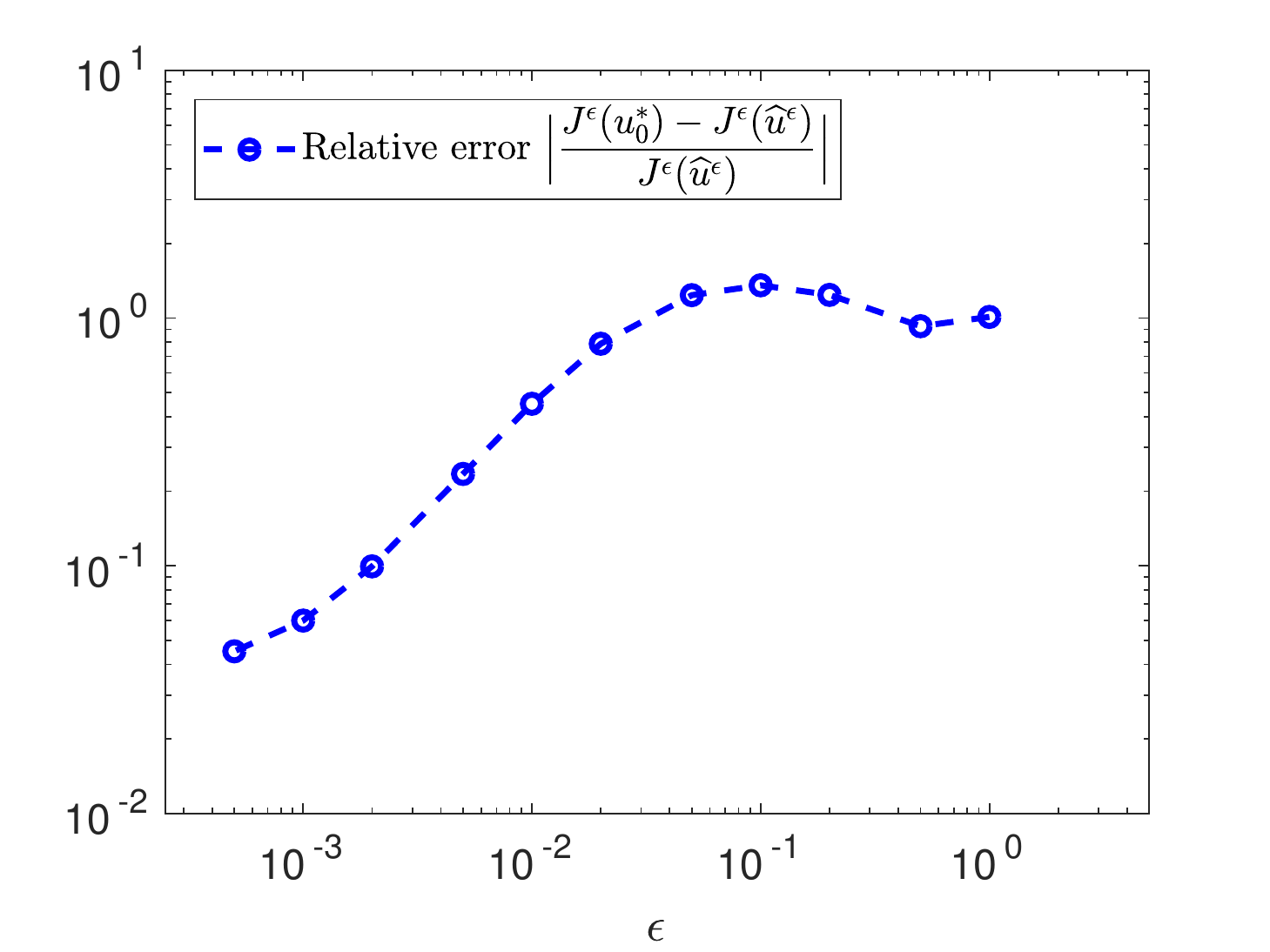}
 \includegraphics[width=.39\textwidth]{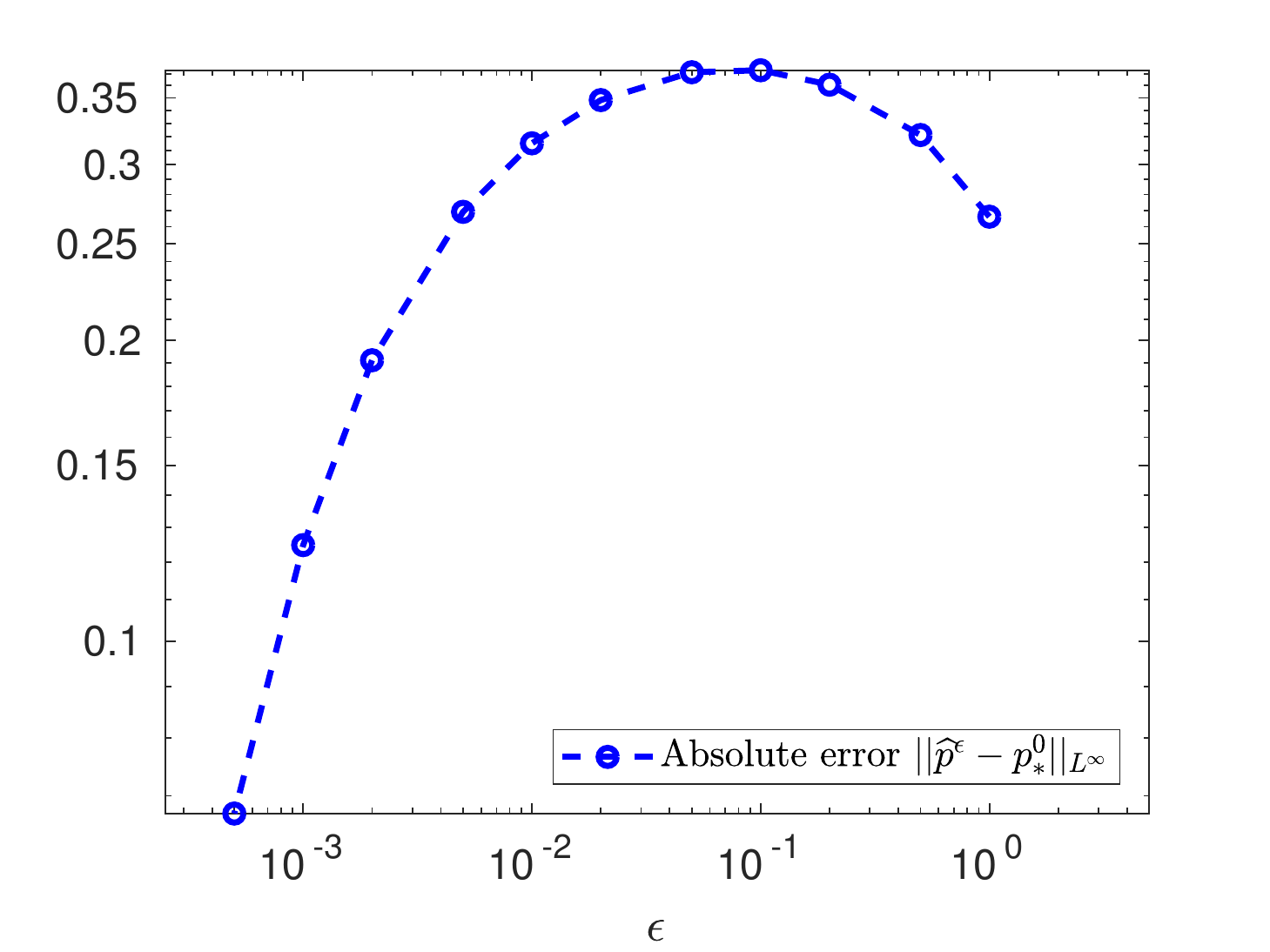}
 \includegraphics[width=.39\textwidth]{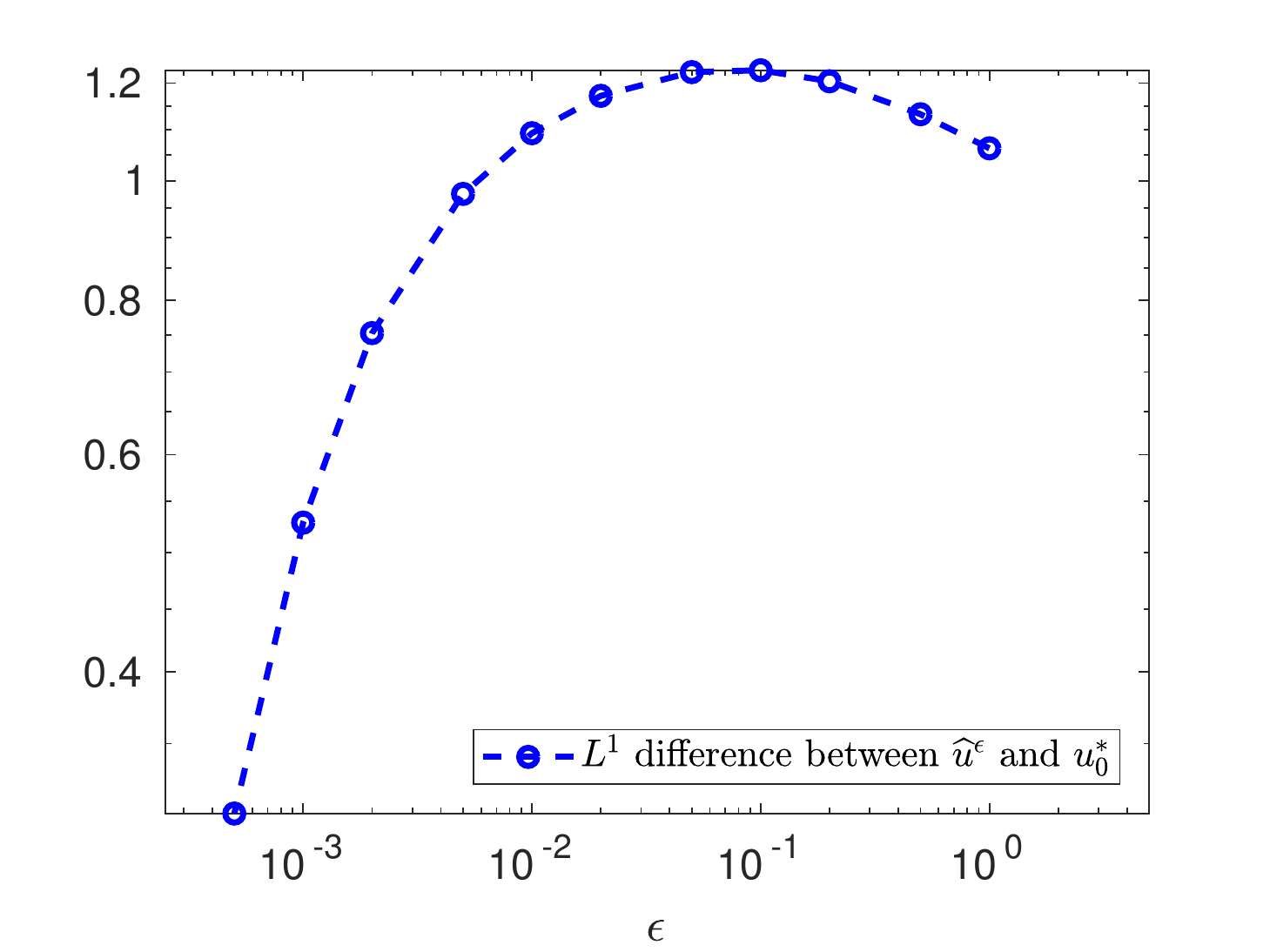}
 \caption{Case $C = 0.75$. Top left: numerical minimum value $J^{\eps}(\widehat{u}^{\eps,\Delta t})$ and $J^{\eps} (u^*_0)$ w.r.t. $\eps$. Top right: relative error between the value of $J^{\eps}$ at the numerical minimizer $\widehat{u}^{\eps,\Delta t}$ and at the exact solution $u^*$ of the asymptotic problem \eqref{prob:reduced} w.r.t. $\eps$. Bottom left: plot of the absolute error between $\widehat{p}^{\eps}$ and $p_0^*$. Bottom right: $L^1$ error between $\widehat{u}^{\eps,\Delta t}$ and $u^*_0$. \label{fig:error75}}
\end{figure}

\begin{figure}[!ht]
\centering
 \includegraphics[width=.39\textwidth]{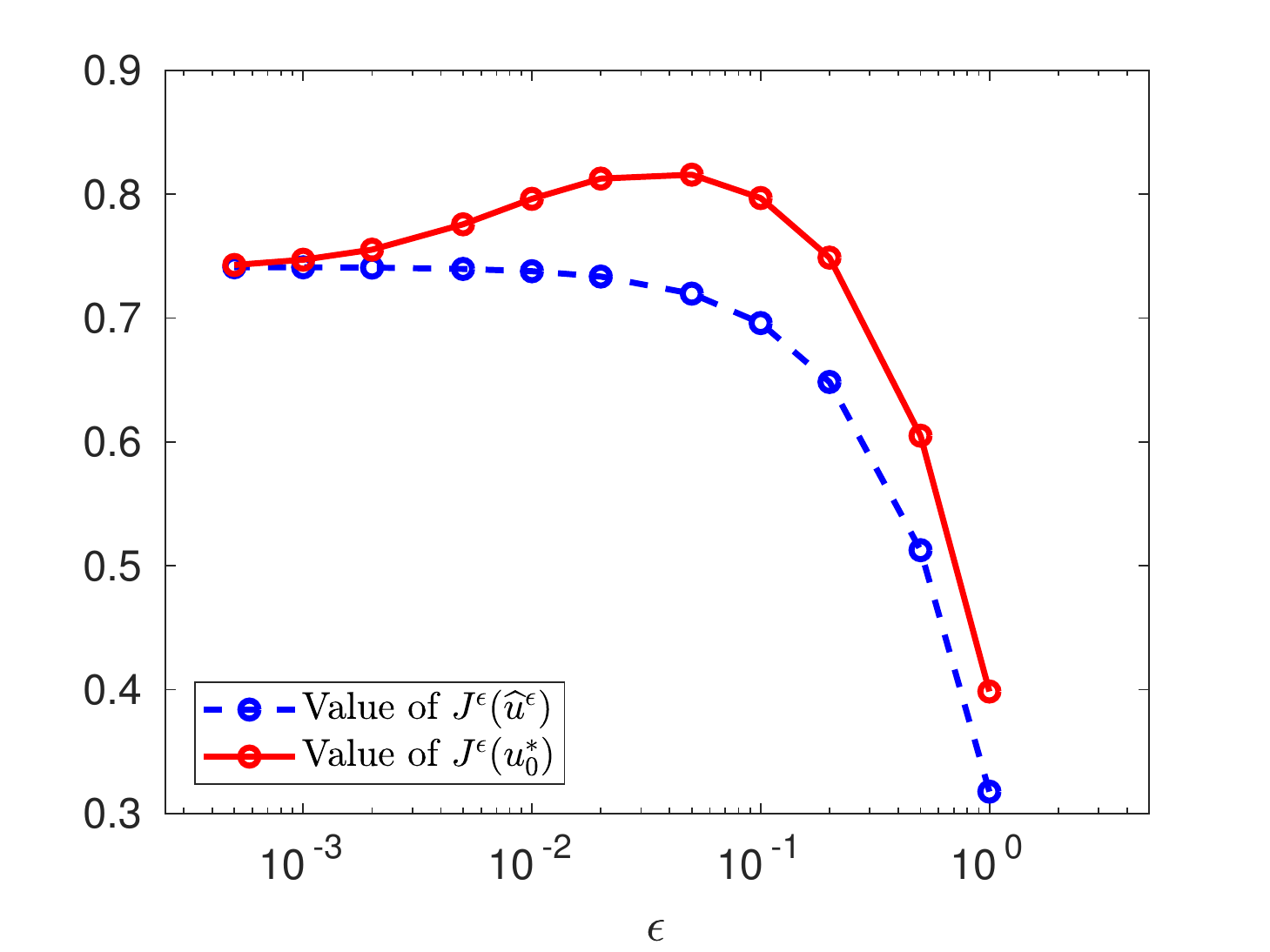}
 \includegraphics[width=.39\textwidth]{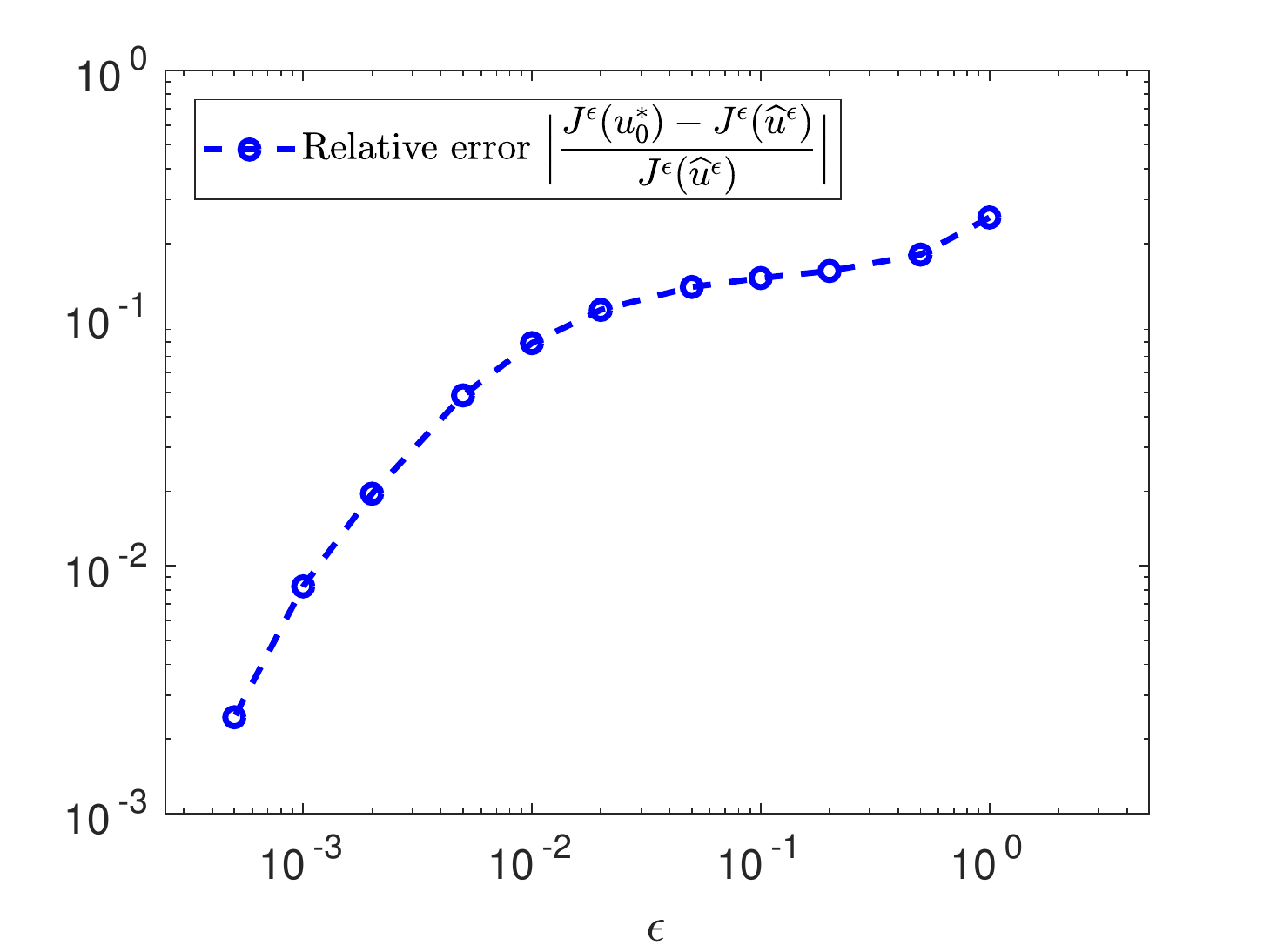}
 \includegraphics[width=.39\textwidth]{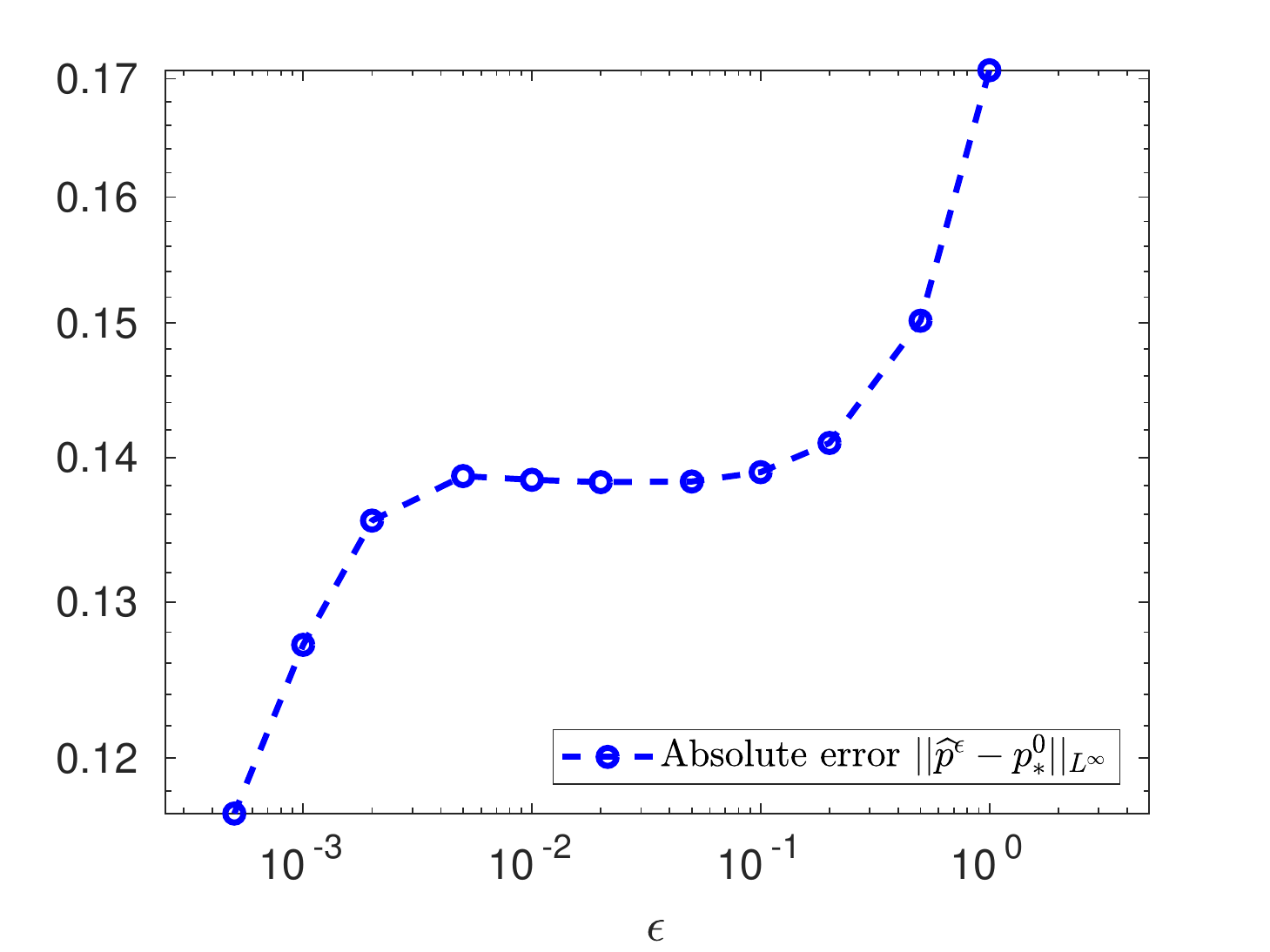}
 \includegraphics[width=.39\textwidth]{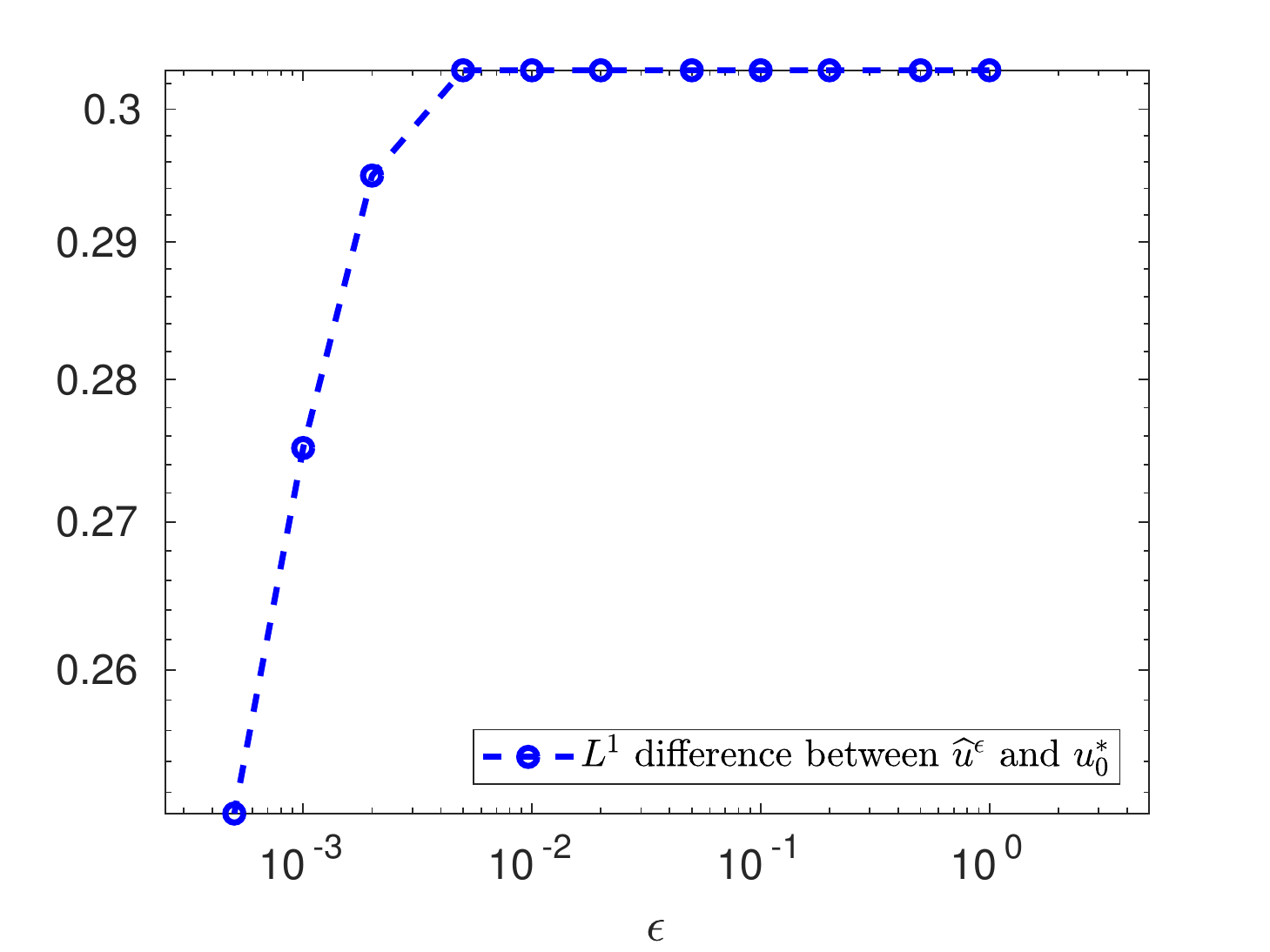}
 \caption{Case $C = 0.15$. Top left: numerical minimum value $J^{\eps}(\widehat{u}^{\eps,\Delta t})$ and $J^{\eps} (u^*_0)$ w.r.t. $\eps$. Top right: relative error between the value of $J^{\eps}$ at the numerical minimizer $\widehat{u}^{\eps,\Delta t}$ and at the exact solution $u^*$ of the asymptotic problem \eqref{prob:reduced} w.r.t. $\eps$. Bottom left: plot of the absolute error between $\widehat{p}^{\eps}$ and $p_0^*$. Bottom right: $L^1$ error between $\widehat{u}^{\eps,\Delta t}$ and $u^*_0$}
 \label{fig:error15}
\end{figure}

Figures \ref{fig:error75} and \ref{fig:error15} are used to validate our approach of considering the asymptotic problem \eqref{prob:reduced} instead of the real one \eqref{prob:full}, with $C = 0.75$ (leading to replacement success) and $C = 0.15$ (leading to replacement failure), respectively. We compare the numerical values of $J(u = \widehat{u}^{\eps,\Delta t})$ obtained either by using the direct optimization routine described above, or by choosing $u = u^*_0$ as the (explicit) solution of Problem \eqref{prob:reduced}. 
As expected, the ratio
\[
 \frac{J^{\eps}(u^*_0) - J^{\eps}(\widehat{u}^{\eps,\Delta t})}{J^{\eps}(\widehat{u}^{\eps,\Delta t})}
\]
visually converges to $0$ as $\eps\searrow 0$. 
The bottom panels in figures \ref{fig:error75} and \ref{fig:error15} illustrate the convergence properties for $p^{\eps}$ stated in Proposition \ref{prop:SFconvergence}, and for $u^{\eps}$ stated in Corollary \ref{cor.strongCV}.

\section{Conclusion}

In this article, we proposed a strategy of {\itshape Wolbachia}-infected mosquitoes releases to control a simplified competitive compartmental system involving wild and infected individuals. Our approach is validated by numerical results that seem promising. Hereafter, we enumerate a list of issues that remain open and will be investigated in a future work.

\paragraph{Partial or complete solving of Problem \eqref{prob:full}.} When investigating numerically this problem (see Section \ref{sec:num}), we observed several interesting properties of minimizers, at least for several relevant values of parameters: {\itshape relaxation} phenomena may appear (meaning that the minimizer $u^*$ is not {\itshape bang-bang} anymore). The set $\{u^*=M\}$ seems to have two connected components meeting 0 and $T$.
 
\paragraph{Asymptotic of Problem \eqref{prob:full} when one makes simultaneously $\eps$ go to zero and $M$ (the pointwise upper-bound constraint on $u$) go to $+\infty$.} According to Theorem \ref{thm:reducedprob}, one shows easily that making successively $\eps$ tend to 0 and then $M$ tend to $+\infty$ yields to a new asymptotic problem whose minimizers are a (typically unique) Dirac mass. When making simultaneously $\eps$ tend to 0 and then $M$ tend to $+\infty$, the behavior of minimizers is not so clear and a careful analysis must be led to understand it.

\paragraph{Investigation of a more realistic model.} Coming back to the initial {\itshape Wolbachia}-infected mosquitoes control problem, it is likely that a model taking into account dispersal effects would provide more satisfying and workable results. To this aim, System \eqref{sys:general} could be replaced by a more general reaction-diffusion system of partial differential equations. It is likely that numerical difficulties may arise for the related optimization problem, needing to develop an adapted approach.

\section*{Acknowledgement.}
The authors were partially supported by the Project ``Analysis and simulation of optimal shapes - application to life-sciences'' of the Paris City Hall.

\appendix

\section{Proof of Lemma \ref{lem.systMonot}}\label{sec.proofs}

Solving the equation $\ff(n_1,n_2)=0$ yields the steady states by direct computation.
Let us use the notations $N = (n_1 + n_2)/K$ and $p = n_2/(n_1 + n_2)$.
The Jacobian associated to the right-hand side $\ff$ of the system reads 
 \begin{multline*}
  \Jac (\nn) = 
  \\
  \begin{pmatrix}
   b_1 \big( (1 -s_h p) (1 - (2-p)N) + s_h p (1 -p) (1 -N) \big) - d_1 & - b_1 (1-p)\big( s_h (1-p) + N (1-s_h) \big) \\
   - b_2 p N & b_2 (1 - (1+p)N) - d_2
 \end{pmatrix}.
 \end{multline*}
It is readily seen that the extra-diagonal terms are non-positive (and even negative if $p \in (0, 1)$ and $N > 0$).
By Kamke-Muller conditions (see \cite{HirSmi.MDS}), this implies that the system is monotone with respect to the cone $\R_+ \times \R_-$, in other words it is competitive.

In particular,
\begin{align*}
 \Jac(n_1^*,0) &= 
 \begin{pmatrix}
  -(b_1 - d_1) & - b_1 + (1 - s_h) d_1 \\
  0 & b_2 d_1/b_1 - d_2
 \end{pmatrix},
 \\
 \Jac(0, n_2^*) &=
 \begin{pmatrix}
  b_1 d_2(1 - s_h) /b_2 - d_1 & 0 \\
  -(b_2 - d_2) & -(b_2 - d_2)
 \end{pmatrix},
\end{align*}
so that conditions \eqref{cond:1} and \eqref{cond:2} easily yield the linear stability of $(n_1^*, 0)$ and $(0, n_2^*)$. Combined with the monotonicity property of the system, we get the asymptotic stability.

Then, $\nn^C$ belongs to the interior of the interval $[(n_1^*, 0), (0, n_2^*)]$ (for the order induced by the comparison principle recalled in Footnote \ref{footnote.comp.pple}), whose bounds are stable steady states, and there is no other steady state in the interior of this interval. Hence it must be unstable, since the dynamics of \eqref{sys:general} is order-preserving.

At $(0, 0)$, we compute the directional derivative in direction $(h,k)$ as
\[
 D\ff (h, k) = \lim_{t \to 0} \frac{\ff(th, tk)}{t} = 
 \begin{pmatrix}
 (b_1 (1 - s_h \frac{k}{h+k}) - d_1) h  \\
 (b_2 - d_2) k
 \end{pmatrix},
\]
and in particular we find that the direction $(0, 1)$ is unstable.



\bibliographystyle{abbrv}
\bibliography{mybibfile}

\end{document}